\documentclass{amsart}

\usepackage{amsthm}
\usepackage{amssymb}
\usepackage{amsmath}
\usepackage{amsfonts}
\usepackage{amsrefs}
\usepackage{textcomp}
\usepackage[T1]{fontenc}
\usepackage[utf8x]{inputenc}
\usepackage{textcomp}
\usepackage{wasysym}
\usepackage{stmaryrd}
\usepackage{esint}
\usepackage[all]{xy}
\usepackage{graphicx}
\usepackage{bbm}
\usepackage{color}
\usepackage{hyperref}

\newtheorem{theorem}{Theorem}[section]

\newtheorem{remark}[theorem]{Remark}
\newtheorem{example}[theorem]{Example}
\newtheorem{lemma}[theorem]{Lemma}
\newtheorem{proposition}[theorem]{Proposition}
\newtheorem{corollary}[theorem]{Corollary}

\numberwithin{equation}{section}


\def\XXint#1#2#3{{\setbox0=\hbox{$#1{#2#3}{\int}$}
     \vcenter{\hbox{$#2#3$}}\kern-.5\wd0}}

\newcommand{\twopartdef}[4]
{
\left\{
		\begin{array}{ll}
			#1 & #2 \\
			#3 & #4
		\end{array}
	\right.
}

\newcommand{\threepartdef}[6]
{
	\left\{
		\begin{array}{lll}
			#1 & #2 \\
			#3 & #4 \\
			#5 & #6
		\end{array}
	\right.
}

\newcommand{\fourpartdef}[8]
{
	\left\{
		\begin{array}{lll}
			#1 & #2 \\
			#3 & #4 \\
			#5 & #6 \\
			#7 & #8
		\end{array}
	\right.
}

\usepackage{geometry}
 \geometry{
 a4paper,
 total={170mm,257mm},
 left=30mm,
 right=30mm,
 top=25mm,
 bottom=25mm
 }

\author{Wojciech G\'{o}rny}

\address{W. G\'orny: Faculty of Mathematics, University of Vienna, Oskar-Morgernstern-Platz 1, 1090 Wien, Austria; Faculty of Mathematics, Informatics and Mechanics, University of Warsaw, Banacha 2, 02-097 Warsaw, Poland.}

\email{wojciech.gorny@univie.ac.at}

\date{\today}

\subjclass[2020]{35J20, 35J25, 35J75, 49Q22}

\title{Applications of optimal transport methods in the least gradient problem}

\keywords{Least Gradient Problem, Optimal transport, SBV functions}

\begin{document}

\begin{abstract}
We study the consequences of the equivalence between the least gradient problem and a boundary-to-boundary optimal transport problem in two dimensions. We extend the relationship between the two problems to their respective dual problems, as well as prove several regularity and stability results for the least gradient problem using optimal transport techniques.
\end{abstract}

\maketitle

\section{Introduction}

The main goal of this paper is to study the relationship between the least gradient problem (see for instance \cite{BGG,Gor2018CVPDE,JMN,MRL,Mor,SWZ})
\begin{equation}\label{eq:leastgradientproblem}\tag{LGP}
\inf \bigg\{ \int_\Omega |Du|: \quad u \in BV(\Omega), \quad u|_{\partial\Omega} = g \bigg\},
\end{equation}
and the boundary-to-boundary Monge-Kantorovich optimal transport problem (see \cite{DS})
\begin{equation}\label{eq:kantorovich}\tag{KP}
\min \bigg\{ \int_{\overline{\Omega} \times \overline{\Omega}} |x-y| \, \mathrm{d}\gamma \, : \, \gamma \in  \mathcal{M}^+(\overline{\Omega} \times \overline{\Omega}), \, (\Pi_x)_{\#}\gamma = f^+ \,\, \mathrm{and} \,\, (\Pi_y)_{\#} \gamma = f^- \bigg\}.
\end{equation}
In the first problem, $g \in BV(\partial\Omega)$ and the boundary condition is understood as a trace of a BV function, while in the second problem $f^\pm \in \mathcal{M}^+(\partial\Omega)$ and a mass balance condition $f^+(\partial\Omega) = f^-(\partial\Omega)$ is satisfied. Let us stress that because the source and target measures are located on $\partial\Omega$, this is not the standard setting for the optimal transport problem, when at least one of the source and target measures is assumed to be absolutely continuous with respect to the Lebesgue measure (see for instance \cite{San2015,Vil}). We will study the two-dimensional situation: throughout the rest of the paper, unless noted otherwise, $\Omega \subset \mathbb{R}^2$ denotes an open bounded convex set. Then, the two problems are equivalent in the following sense: if $f^\pm = (\partial_\tau g)_\pm$, i.e. $f^+$ is a positive part of the tangential derivative of $g$ and $f^-$ is its negative part, then the infimal values coincide and from a solution of one problem we may construct a solution of the other problem (see \cite{GRS2017NA,DS}). The goal of this paper is to explore this relationship in more depth and use it to prove new regularity and stability results for solutions of the least gradient problem.

Let us briefly recall already known results in this direction. The least gradient problem in the form \eqref{eq:leastgradientproblem} has been first studied in \cite{SWZ} for continuous boundary data (although problems of this type have been studied earlier, see for instance \cite{BGG,KS,PP}). It appears naturally in the study of minimal surfaces, namely boundaries of superlevel sets of solutions to \eqref{eq:leastgradientproblem} area-minimising (see \cite{BGG}). Hence, it is not surprising that existence of solutions requires some geometric assumptions on $\Omega$. Indeed, existence of solutions for continuous boundary data has been proved in \cite{SWZ} under an assumption slightly weaker than strict convexity of $\Omega$. Since then, the theory developed in several directions: existence of solutions in the trace sense for less regular boundary data (see \cite{Gor2018CVPDE,Gor2019IUMJ,Mor}); anisotropic cases (see \cite{Gor2020NA,JMN,Zun}); non-strictly convex domains (see \cite{DG2019,RS,RS2}); the relaxed formulation for general $f \in L^1(\partial\Omega)$ and arbitrary Lipschitz domains (see \cite{MRL,Maz,Mor}); and extensions to metric measure spaces (see \cite{HKLS,KLLS}). In particular, in two dimensions we have existence of solutions on strictly convex domains for BV boundary data (see \cite{Gor2018CVPDE}), so both problems \eqref{eq:leastgradientproblem} and \eqref{eq:kantorovich} admit solutions for $g \in BV(\partial\Omega)$ and $f^{\pm} = (\partial_\tau g)_\pm \in \mathcal{M}^+(\partial\Omega)$.

The equivalence between the least gradient problem \eqref{eq:leastgradientproblem} and the boundary-to-boundary optimal transport problem \eqref{eq:kantorovich} on strictly convex domains in two dimensions was proved in two steps. First, it was proved in \cite{GRS2017NA} that on convex domains problem \eqref{eq:leastgradientproblem} is related to the Beckmann problem
\begin{equation}\label{eq:beckmannproblem}\tag{BP}
\inf \bigg\{ \int_\Omega |p|: p \in \mathcal{M}(\overline{\Omega}; \mathbb{R}^2), \quad \text{ div } p = f \bigg\},
\end{equation}
where the equation $\mathrm{div} \, p = f$ is understood in the distributional sense: for every $\phi \in C^1(\overline{\Omega})$, we have $\int_{\overline{\Omega}} \nabla \phi \cdot \mathrm{d}p = \int_{\partial\Omega}\phi\,\mathrm{d}f$. On strictly convex domains, problems \eqref{eq:leastgradientproblem} and \eqref{eq:beckmannproblem} are equivalent in the following sense: if $f \in \mathcal{M}(\partial\Omega)$ and $f = \partial_\tau g$, the infimal values coincide and given a minimiser of one problem, we may construct a minimiser of the other problem. The relationship between a minimiser $p \in \mathcal{M}(\overline{\Omega}; \mathbb{R}^2)$ to \eqref{eq:beckmannproblem} and a minimiser $u \in BV(\Omega)$ of \eqref{eq:leastgradientproblem} is given by $p = R_{-\frac{\pi}{2}} Du$. On convex domains, the situation is a bit more complicated, but some version of this equivalence still holds; for details we refer to Section \ref{sec:preliminaries}. On the other hand, on convex domains the Beckmann problem is known to be completely equivalent to the optimal transport problem \eqref{eq:kantorovich} (see \cite{San2015} and the references therein). This equivalence holds for general $f \in \mathcal{M}(\overline{\Omega})$; in fact, a typical assumption in the study of the Monge-Kantorovich problem \eqref{eq:kantorovich} is that either $f^+$ or $f^-$ is absolutely continuous with respect to the Lebesgue measure (see \cite{San2015,Vil}). Here, since both measures are supported on $\partial\Omega$, which is a set of zero Lebesgue measure, this is clearly not the case. The boundary-to-boundary problem \eqref{eq:kantorovich} was studied in depth in \cite{DS}; there, the authors proved that the equivalence with the least gradient problem holds also in anisotropic cases and that $L^p$ estimates for the transport density imply $W^{1,p}$ estimates in the corresponding least gradient problem. Since then, the optimal transport methods proved to be powerful tools in the study of the least gradient problem; the authors of \cite{DS} proved $L^p$ estimates for the transport density in a few settings on uniformly convex domains for $L^p$ boundary data, which imply $W^{1,p}$ regularity of solutions to \eqref{eq:leastgradientproblem} for $W^{1,p}$ boundary data with $p \leq 2$. Optimal transport methods were also used in \cite{DG2019} to study the least gradient problem on an annulus, which could not be handled by the previously known techniques due to the fact that its boundary is not connected.

The main goal of this paper is to study in more depth the equivalence between the two problems \eqref{eq:leastgradientproblem} and \eqref{eq:kantorovich}. We start by studying the dual problems to the least gradient problem and to the Monge-Kantorovich problem. The dual to problem \eqref{eq:kantorovich} is the well-known maximisation problem (see \cite{San2015,Vil})
\begin{equation}\label{eq:dualtokantorovich}\tag{dKP}
\sup\bigg\{\int_{\overline{\Omega}} \phi\,\mathrm{d}(f^+ - f^-)\,:\,\phi \in  \mathrm{Lip}_1(\overline{\Omega})\bigg\}. 
\end{equation}
whose solutions are known as Kantorovich potentials, while the dual problem to \eqref{eq:leastgradientproblem} is the following maximisation problem (see \cite{Gor2020TAMS,Mor})
\begin{equation}\label{eq:dualtolgp}\tag{dLGP}
\sup \bigg\{ \int_{\partial\Omega} [\mathbf{z},\nu] \, g \, \mathrm{d}\mathcal{H}^{1}: \mathbf{z} \in \mathcal{Z} \bigg\},
\end{equation}
where
\begin{equation*}
\mathcal{Z} = \bigg\{ \mathbf{z} \in L^\infty(\Omega;\mathbb{R}^2), \quad \mathrm{div}(\mathbf{z}) = 0, \quad \| \mathbf{z} \|_\infty \leq 1 \mbox{ a.e. in } \Omega \bigg\}
\end{equation*}
and $[\mathbf{z},\nu]$ denotes the normal trace of a vector field whose divergence is a Radon measure (see \cite{Anz,CF}). It turns out that these problems are again equivalent, in the sense that their supremal values coincide and from a solution to one of the two problems we may construct a solution of the other problem; this is proved in Theorem \ref{thm:dualproblems}. Then, we exploit this relationship to study the structure of solutions to the least gradient problem.

Then, we use the equivalence between problems \eqref{eq:leastgradientproblem} and \eqref{eq:kantorovich} for two purposes. The first one is to give new results on the regularity of solutions to the least gradient problem. In contrast to the results in \cite{DS}, we focus on the case when the boundary datum is discontinuous, and the optimal transport plan in \eqref{eq:kantorovich} is not necessarily unique nor induced by a map. The main result in this direction is Theorem \ref{thm:sbvregularity}, which states that for $g \in SBV(\partial\Omega)$, then even though solutions to problem \eqref{eq:leastgradientproblem} may no longer be unique, then every solution lies in $SBV(\Omega)$. It is complemented by a few results on local properties of solutions in the case when the jump set is finite.

The second purpose is to study stability of families of solutions to the least gradient problem. We study two main cases: in the first one, we approximate the boundary datum in the strict topology of $BV(\partial\Omega)$, and prove that then the sequence of solutions converges in the strict topology of $BV(\Omega)$ to a solution of the original problem; this is done in Theorem \ref{thm:stability}. In the second one, we instead approximate the domain by a decreasing sequence of strictly convex domains converging in the Hausdorff distance, with particular applications to the case when $\Omega$ is only convex. In the process, we also prove an estimate on the total variation of the solution interesting in its own right.

The structure of the paper is as follows. In Section \ref{sec:preliminaries}, we recall known results about the equivalence between problems \eqref{eq:leastgradientproblem} and \eqref{eq:kantorovich} and some basic properties of solutions to both problems. In Section \ref{sec:dualproblems}, we study the relationship between the problems \eqref{eq:dualtolgp} and \eqref{eq:dualtokantorovich}, namely the duals of the original problems \eqref{eq:leastgradientproblem} and \eqref{eq:kantorovich}, and its consequences for the structure of solutions to the least gradient problem. Then, in Section \ref{sec:SBVregularity} we study the consequences of the equivalence between the two problems for the regularity of solutions to the least gradient problem. In the final Section \ref{sec:stability} we study stability properties of sequences of solutions to approximate problems. Finally, let us note that all the results are valid also in the anisotropic case, when the distance is constructed from a strictly convex norm. Nonetheless, in order to simplify the notation, throughout the paper we will give the results for the Euclidean norm and only comment on the anisotropic case at the end of each Section; we do this partly because the results are already new in the Euclidean case and their validity in the anisotropic case is simply a useful byproduct of the proofs.

\section{Preliminaries}\label{sec:preliminaries}

In this Section, we present the equivalences between the least gradient problem \eqref{eq:leastgradientproblem}, the Beckmann problem \eqref{eq:beckmannproblem} and the classical Monge-Kantorovich problem \eqref{eq:kantorovich}, together with some properties of solutions to these problems. Recall that throughout the paper, unless noted otherwise, $\Omega \subset \mathbb{R}^2$ denotes an open bounded convex set.

First, let us focus on the relationship between the least gradient problem \eqref{eq:leastgradientproblem} 
\begin{equation*}
\min\bigg\{ \int_\Omega |Du| \,:\,u \in BV(\Omega),\, u|_{\partial\Omega} = g \bigg\},
\end{equation*}
and the Beckmann problem \eqref{eq:beckmannproblem}
\begin{equation*}
\min \bigg\{ \int_{\overline{\Omega}} |p|\,:\, p \in \mathcal{M}(\overline{\Omega}; \mathbb{R}^2),\,\, \mathrm{div} \, p = f \bigg\}.
\end{equation*}
The boundary condition in \eqref{eq:leastgradientproblem} is understood as a trace of a $BV$ function and the divergence condition in \eqref{eq:beckmannproblem} is understood in the distributional sense. In other words, we have $\mathrm{div} \, p = 0$ in $\Omega$ and $[p, \nu] = f$ on $\partial\Omega$; here, $[p,\nu]$ denotes the normal trace of a vector field whose divergence is a Radon measure (for a precise definition see \cite{Anz} or \cite{CF}). Suppose that $g \in BV(\partial\Omega)$ and suppose that $f \in \mathcal{M}(\partial\Omega)$ satisfies a mass balance condition, i.e. $\int_{\partial\Omega} \mathrm{d}f = 0$. Then, both problems admit solutions (see \cite{Gor2018CVPDE} for the least gradient problem and \cite{San2015} for the Beckmann problem). Notice that such $f$ and $g$ are in a one-to-one correspondence (up to an additive constant in $g$) via the relation $f = \partial_\tau g$.

It was observed in \cite{GRS2017NA} that problems \eqref{eq:leastgradientproblem} and \eqref{eq:beckmannproblem} are closely related. Namely, if we take an admissible function $u \in BV(\Omega)$ in \eqref{eq:leastgradientproblem}, then $p = R_{-\frac{\pi}{2}} Du$ is admissible in \eqref{eq:beckmannproblem}. Indeed, in dimension two a rotation of a gradient by $-\frac{\pi}{2}$ is a divergence-free field in $\Omega$ and it interchanges the normal and tangent components at the boundary. In the other direction, given a vector field $p \in L^1(\Omega;\mathbb{R}^2)$ admissible in \eqref{eq:beckmannproblem}, we can recover $u \in W^{1,1}(\Omega)$ admissible in \eqref{eq:leastgradientproblem}; the following result has been proved in \cite[Proposition 2.1]{GRS2017NA}.

\begin{proposition}\label{prop:recoveryinL1}
Suppose that $p \in L^1(\Omega; \mathbb{R}^2)$ and $\mathrm{div} \, p = 0$ in the sense of distributions. Then, there exists $u \in W^{1,1}(\Omega)$ such that $p = R_{-\frac{\pi}{2}} \nabla u$. Moreover, we have $[p,\nu] = \partial_\tau(Tu)$.
\end{proposition}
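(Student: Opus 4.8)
The plan is to recover the potential $u$ from $p$ by integrating along paths, exploiting that the rotated field $R_{\frac{\pi}{2}}p$ is curl-free because $p$ is divergence-free. Concretely, set $q = R_{\frac{\pi}{2}} p \in L^1(\Omega;\mathbb{R}^2)$; then the distributional identity $\operatorname{div} p = 0$ translates into $\operatorname{curl} q = 0$ in $\mathcal{D}'(\Omega)$, i.e. $\partial_1 q_2 - \partial_2 q_1 = 0$. On the convex (hence simply connected) domain $\Omega$, a curl-free $L^1$ vector field is a distributional gradient: there exists $u \in W^{1,1}_{\mathrm{loc}}(\Omega)$ with $\nabla u = q$, and since $q \in L^1(\Omega;\mathbb{R}^2)$ and $\Omega$ is bounded we in fact get $u \in W^{1,1}(\Omega)$ after fixing the additive constant. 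Unwinding the rotations, $p = R_{-\frac{\pi}{2}} q = R_{-\frac{\pi}{2}} \nabla u$, which is the first claim. The standard way to produce $u$ is via a Poincar\'e-lemma argument: mollify $q$ to get smooth curl-free fields $q_\varepsilon$ on compactly contained subdomains, define $u_\varepsilon$ by line integrals from a fixed basepoint (path-independent by Stokes on the simply connected domain), and pass to the limit using the $L^1$ convergence $q_\varepsilon \to q$; alternatively one can cite the de Rham / Poincar\'e lemma for currents directly.

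For the boundary identity $[p,\nu] = \partial_\tau(Tu)$, the idea is to test the distributional divergence condition against $C^1(\overline{\Omega})$ functions and compare with the definition of the tangential derivative of the trace. Since $\operatorname{div} p = 0$ in $\Omega$ and $p \in L^1(\Omega;\mathbb{R}^2)$, the divergence of $p$ as a measure on $\overline{\Omega}$ is concentrated on $\partial\Omega$, and the normal trace $[p,\nu] \in \mathcal{M}(\partial\Omega)$ is characterised by
\begin{equation*}
\int_{\partial\Omega} \phi \, \mathrm{d}[p,\nu] = \int_\Omega \nabla\phi \cdot p \, \mathrm{d}x \qquad \text{for all } \phi \in C^1(\overline{\Omega}).
\end{equation*}
On the other hand, substituting $p = R_{-\frac{\pi}{2}}\nabla u$ gives $\nabla\phi \cdot p = \nabla\phi \cdot R_{-\frac{\pi}{2}}\nabla u = -R_{-\frac{\pi}{2}}\nabla\phi \cdot \nabla u$ (using that $R_{-\frac{\pi}{2}}$ is a rotation, so $R_{-\frac{\pi}{2}}^{T} = R_{\frac{\pi}{2}} = -R_{-\frac{\pi}{2}}$ as an operator followed by a sign; I will fix the sign bookkeeping carefully), so the right-hand side becomes an integral of $u$ against a tangential derivative of $\phi$. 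Integrating by parts in $W^{1,1}$ and using the structure of the trace operator, the boundary term reduces to $\int_{\partial\Omega} \phi \, \mathrm{d}(\partial_\tau(Tu))$, where $\partial_\tau$ on $\partial\Omega$ is defined in the distributional sense by $\int_{\partial\Omega}\phi\,\mathrm{d}(\partial_\tau h) = -\int_{\partial\Omega} \partial_\tau\phi \, h \, \mathrm{d}\mathcal{H}^1$. Comparing the two expressions for arbitrary $\phi \in C^1(\overline{\Omega})$ yields $[p,\nu] = \partial_\tau(Tu)$.

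The main obstacle is the boundary identity rather than the interior recovery: the interior part is a routine Poincar\'e-lemma argument, but matching the normal trace $[p,\nu]$ (an object defined via the Anzellotti pairing for $L^\infty$ or measure-divergence fields, here applied to an $L^1$ field with vanishing interior divergence) with $\partial_\tau(Tu)$ requires being careful about (i) which test function class is used and the density of $C^1(\overline{\Omega})$ among admissible test functions, (ii) the precise sign and orientation conventions in the rotation $R_{-\frac{\pi}{2}}$ and in the tangent/normal decomposition at $\partial\Omega$, and (iii) justifying the integration by parts for a general $W^{1,1}(\Omega)$ function, which is standard but must be invoked with the correct trace theorem. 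Once the conventions are pinned down, the computation is short; the delicate point is simply that all the boundary-trace objects are a priori only measures, so the identity must be established weakly against $C^1(\overline{\Omega})$ and not pointwise.
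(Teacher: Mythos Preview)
The paper does not give its own proof of this proposition; it is quoted from \cite[Proposition 2.1]{GRS2017NA}. Consequently there is nothing in this paper to compare your argument against.

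Your outline is the natural one and is correct in substance. The interior recovery is exactly the Poincar\'e lemma on a simply connected planar domain: $\mathrm{div}\,p=0$ becomes $\mathrm{curl}(R_{\pi/2}p)=0$, mollification plus path integration from a fixed basepoint gives a potential, and the $L^1$ bound on the gradient together with boundedness of $\Omega$ yields $u\in W^{1,1}(\Omega)$. For the boundary identity, your plan to test against $\phi\in C^1(\overline{\Omega})$ (or $C^2$, then pass to the limit) and use the Gauss--Green formula for $W^{1,1}$ functions is correct; the key algebraic observation is that $R_{\pm\pi/2}$ interchanges the normal and tangential components at $\partial\Omega$, so that $R_{\pi/2}\nabla\phi\cdot\nu$ equals $\pm\partial_\tau\phi$ depending on the orientation convention for $\tau$. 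You are right that the only delicate point is the sign bookkeeping; once the convention $\tau=R_{\mp\pi/2}\nu$ is fixed consistently with the paper's usage (so that $f=\partial_\tau g$ agrees with the Beckmann datum), the computation goes through. One small remark: for a general $L^1$ divergence-free field the normal trace $[p,\nu]$ is not a priori a measure, only a distribution on $\partial\Omega$; here the identity $[p,\nu]=\partial_\tau(Tu)$ is precisely what gives it meaning, since $Tu\in L^1(\partial\Omega)$ and hence $\partial_\tau(Tu)$ is a well-defined distribution.
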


In \cite{DS}, the authors noticed that this result may be improved: if $p \in \mathcal{M}(\overline{\Omega}; \mathbb{R}^2)$ is such that $|p|(\partial\Omega) = 0$, then there exists $u \in BV(\Omega)$ such that $p = R_{-\frac{\pi}{2}} Du$ and $[p,\nu] = \partial_\tau(Tu)$. In particular, notice that $|p| = |Du|$ as measures on $\overline{\Omega}$; hence, the infimal values in \eqref{eq:leastgradientproblem} and \eqref{eq:beckmannproblem} coincide. Let us sum up these considerations as follows.

\begin{theorem}\label{thm:lgpbeckmannequivalence} Let $\Omega\subset \mathbb{R}^2$ be an open bounded convex set. Then, the problems \eqref{eq:leastgradientproblem} and \eqref{eq:beckmannproblem} are equivalent in the following sense: \\
(1) Their infimal values coincide, i.e. $\inf \eqref{eq:leastgradientproblem} = \inf \eqref{eq:beckmannproblem}$; \\
(2) Given a solution $u \in BV(\Omega)$ of \eqref{eq:leastgradientproblem}, we can construct a solution $p \in \mathcal{M}(\overline{\Omega}; \mathbb{R}^2)$ of \eqref{eq:beckmannproblem}; moreover, $p = R_{-\frac{\pi}{2}} Du$; \\
(3) Given a solution $p \in \mathcal{M}(\overline{\Omega}; \mathbb{R}^2)$ of \eqref{eq:beckmannproblem} with $|p|(\partial\Omega) = 0$, we can construct a solution $u \in BV(\Omega)$ of \eqref{eq:leastgradientproblem}; moreover, $p = R_{-\frac{\pi}{2}} Du$.
\end{theorem}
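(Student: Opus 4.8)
The plan is to derive all three statements from the single relation $p = R_{-\frac{\pi}{2}}Du$, together with Proposition~\ref{prop:recoveryinL1} and its $BV$ refinement from \cite{DS} recalled above; the only assertion carrying real content is the equality of infima in~(1), after which (2) and (3) are essentially bookkeeping.

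First I would treat the passage from a competitor of \eqref{eq:leastgradientproblem} to one of \eqref{eq:beckmannproblem}. Let $u \in BV(\Omega)$ with $Tu = g$, and set $p := R_{-\frac{\pi}{2}}Du \in \mathcal{M}(\overline{\Omega};\mathbb{R}^2)$. In two dimensions the rotation by $-\frac{\pi}{2}$ sends $(\partial_1 u,\partial_2 u)$ to $(\partial_2 u, -\partial_1 u)$, which is divergence free as a distribution in $\Omega$, and it interchanges the normal and tangential traces on $\partial\Omega$, so that $[p,\nu] = \partial_\tau(Tu) = \partial_\tau g = f$; hence $p$ is admissible in \eqref{eq:beckmannproblem}. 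Since $R_{-\frac{\pi}{2}}$ is a linear isometry, $|p| = |Du|$ as measures on $\overline{\Omega}$, and as $Du$ charges no part of $\partial\Omega$ we obtain $|p|(\partial\Omega) = 0$ and $\int_{\overline{\Omega}} |p| = \int_\Omega |Du|$. Minimising over admissible $u$ gives $\inf \eqref{eq:beckmannproblem} \le \inf \eqref{eq:leastgradientproblem}$.

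For the reverse inequality I would use the recovery direction. If $p \in \mathcal{M}(\overline{\Omega};\mathbb{R}^2)$ is admissible in \eqref{eq:beckmannproblem} with $|p|(\partial\Omega) = 0$, the $BV$ version of Proposition~\ref{prop:recoveryinL1} produces $u \in BV(\Omega)$ with $p = R_{-\frac{\pi}{2}}Du$ and $[p,\nu] = \partial_\tau(Tu)$, which forces $\partial_\tau(Tu) = [p,\nu] = f = \partial_\tau g$; since $\partial\Omega$ is connected ($\Omega$ being convex) this gives $Tu = g + c$ for a constant $c$, so after replacing $u$ by $u - c$ (which changes neither $Du$ nor $p$) we obtain a competitor in \eqref{eq:leastgradientproblem} with $\int_\Omega |Du| = \int_{\overline{\Omega}} |p|$. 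The delicate point is that a minimiser of \eqref{eq:beckmannproblem} need not satisfy $|p|(\partial\Omega) = 0$ when $\partial\Omega$ contains segments, along which mass may be transported. To get around this I would verify that restricting \eqref{eq:beckmannproblem} to fields with $|p|(\partial\Omega) = 0$ does not raise its infimum: fixing $0 \in \Omega$, one transports $f^\pm$ radially from $\partial\Omega$ onto $\partial(\lambda\Omega)$ for $\lambda < 1$ and then performs an optimal transport between the images inside the convex set $\overline{\lambda\Omega} \subset \Omega$; the resulting composite field is still admissible in \eqref{eq:beckmannproblem} for $f$, is supported away from $\partial\Omega$, and by scaling has cost at most $\lambda \inf \eqref{eq:beckmannproblem} + C(1-\lambda)$ with $C$ independent of $\lambda$. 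Letting $\lambda \uparrow 1$ yields a minimising sequence $p_n$ for \eqref{eq:beckmannproblem} with $|p_n|(\partial\Omega) = 0$; applying the previous observation to each $p_n$ gives $\inf \eqref{eq:leastgradientproblem} \le \inf \eqref{eq:beckmannproblem}$, which together with the previous paragraph proves~(1).

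Once (1) is available, (2) and (3) follow from the two constructions: if $u$ solves \eqref{eq:leastgradientproblem}, then $p = R_{-\frac{\pi}{2}}Du$ is admissible in \eqref{eq:beckmannproblem} with cost $\int_\Omega |Du| = \inf \eqref{eq:leastgradientproblem} = \inf \eqref{eq:beckmannproblem}$, hence optimal; and if $p$ solves \eqref{eq:beckmannproblem} with $|p|(\partial\Omega) = 0$, then the recovered $u$ (after subtracting the constant) is admissible in \eqref{eq:leastgradientproblem} with $\int_\Omega |Du| = \int_{\overline{\Omega}} |p| = \inf \eqref{eq:beckmannproblem} = \inf \eqref{eq:leastgradientproblem}$, hence optimal, and $p = R_{-\frac{\pi}{2}}Du$ by construction. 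I expect the main obstacle to be precisely the boundary-charging issue above: on a strictly convex domain it is vacuous, since distinct boundary points are joined by a unique geodesic, a segment meeting $\partial\Omega$ only at its endpoints, so the associated transport density gives no mass to $\partial\Omega$; but on a general convex domain one must argue that it causes no loss in the infimum of \eqref{eq:beckmannproblem}, either by the contraction sketched above or by invoking the known complete equivalence of \eqref{eq:beckmannproblem} with \eqref{eq:kantorovich} and the density of transport plans supported in the interior.
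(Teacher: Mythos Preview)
Your proposal is correct and matches the paper's approach: the paper does not give a formal proof of this theorem but rather presents it as a summary of the discussion preceding it (drawn from \cite{GRS2017NA} and \cite{DS}), which is exactly the two constructions $u \mapsto R_{-\frac{\pi}{2}}Du$ and its inverse via Proposition~\ref{prop:recoveryinL1} and its $BV$ refinement, together with the observation $|p|=|Du|$.

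Where you go beyond the paper is in treating the boundary-charging issue. The paper simply writes ``hence, the infimal values in \eqref{eq:leastgradientproblem} and \eqref{eq:beckmannproblem} coincide'' after noting the recovery holds for $p$ with $|p|(\partial\Omega)=0$, without justifying that restricting to such $p$ leaves $\inf\eqref{eq:beckmannproblem}$ unchanged on a merely convex domain. Your radial contraction argument fills this in and is sound: with $0\in\Omega$, each segment $[x,\lambda x]$ meets $\partial\Omega$ only at $x$ (a convex combination of an interior point and a boundary point lies in the interior), so the radial pieces contribute zero mass to $\partial\Omega$, and the scaled interior transport has cost $\lambda\inf\eqref{eq:beckmannproblem}$. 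One wording correction: the composite field is not ``supported away from $\partial\Omega$'' (the radial segments touch $\partial\Omega$ at their outer endpoints); what you need, and what your construction gives, is $|p|(\partial\Omega)=0$. With that adjustment the argument is complete, and in this respect your write-up is more careful than the paper's sketch.
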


Now, we turn to the equivalence between the Beckmann problem \eqref{eq:beckmannproblem}
\begin{equation*}
\min \bigg\{ \int_{\overline{\Omega}} |p|\,:\, p \in \mathcal{M}(\overline{\Omega}; \mathbb{R}^2),\,\, \mathrm{div} \, p = f \bigg\}.
\end{equation*}
and the Monge-Kantorovich problem \eqref{eq:kantorovich}
\begin{equation*}
\min \bigg\{ \int_{\overline{\Omega} \times \overline{\Omega}} |x-y| \, \mathrm{d}\gamma \, : \, \gamma \in \mathcal{M}^+(\overline{\Omega} \times \overline{\Omega}), \, (\Pi_x)_{\#}\gamma = f^+ \,\, \mathrm{and} \,\, (\Pi_y)_{\#} \gamma = f^- \bigg\}.
\end{equation*}
This equivalence is standard in the optimal transport theory (see for instance \cite[Chapter 4]{San2015}), but we will describe it shortly for completeness. Here, $\Omega \subset \mathbb{R}^d$ is a bounded open convex set, $f \in \mathcal{M}(\overline{\Omega})$ and $f = f^+ - f^-$ is its decomposition into a positive and negative part. In order for the problem to be well defined, we assume that the mass balance condition $f^+(\overline{\Omega}) = f^-(\overline{\Omega})$ holds. In particular, this setting covers our case, when the measures are concentrated on $\partial\Omega$.

First, let us recall a few standard results in the optimal transport theory. The fact that \eqref{eq:dualtokantorovich} is the dual problem to \eqref{eq:kantorovich} is well-known, see for instance \cite{San2015,Vil}. As a corollary of its proof, we get that there exist solutions to both problems, and that any optimal transport plan $\gamma$ and any Kantorovich potential $\phi$ satisfy the following equality:
$$\int_{\overline{\Omega} \times \overline{\Omega}} (|x-y| - (\phi(x) - \phi(y))) \, \mathrm{d}\gamma(x,y) =0,$$
which implies that 
$$\phi(x) - \phi(y) = |x-y| \quad \mbox{on supp}(\gamma).$$
If $\phi$ is a Kantorovich potential, we call any maximal segment \,$\left[x,y\right]$ satisfying \,$\phi(x)-\phi(y)=|x-y|$ a 
{\it transport ray}. The Kantorovich potential is in general not unique, but frame of transport rays does not depend on the choice of $\phi$ (at least on the support of $\gamma$). Moreover, an optimal transport plan $\gamma$ has to move the mass along the transport rays.

Now, we turn our attention to the equivalence between the Kantorovich problem \eqref{eq:kantorovich} and the Beckmann problem \eqref{eq:beckmannproblem}. First, let us see that for any $p \in \mathcal{M}(\overline{\Omega};\mathbb{R}^d)$ admissible in the Beckmann problem \eqref{eq:beckmannproblem}, we have that for any $C^1$ function $\phi$ with $|\nabla \phi| \leq 1$
\begin{equation*}
|p|(\overline{\Omega}) = \int_{\overline{\Omega}} 1 \, \mathrm{d}|p| \geq \int_{\overline{\Omega}} (-\nabla \phi) \cdot \mathrm{d}p = \int_{\overline{\Omega}} \phi \, \mathrm{d}f,
\end{equation*}
hence $\min \eqref{eq:beckmannproblem} \geq \max \eqref{eq:dualtokantorovich} = \min \eqref{eq:kantorovich}$ (the supremum in \eqref{eq:dualtokantorovich} is taken for Lipschitz functions, but we may approximate them uniformly by $C^1$ functions). On the other hand, given an optimal transport plan $\gamma$, we may construct a vector measure $p_\gamma \in \mathcal{M}(\overline{\Omega};\mathbb{R}^d)$ defined by the formula
\begin{equation}\label{eq:definitionofpgamma}
\langle p_\gamma,\varphi \rangle :=\int_{\overline{\Omega} \times \overline{\Omega}} \int_0^1 \omega_{x,y}'(t) \cdot \varphi(\omega_{x,y}(t)) \, \mathrm{d}t \, \mathrm{d}\gamma (x,y)
\end{equation}
for all $\varphi \in C(\overline{\Omega}; \mathbb{R}^d)$. Here, $\omega_{x,y}(t) = (1-t)x + ty$ is the constant-speed parametrisation of $[x,y]$. By taking $\varphi = \nabla \phi$, it is immediate that $p_\gamma$ satisfies the divergence constraint. The total mass of $p_\gamma$ will be estimated using the {\it transport density} $\sigma_\gamma \in \mathcal{M}(\overline{\Omega})$, which is defined by the formula
\begin{equation}\label{eq:definitionofsigma}
\langle \sigma_\gamma,\phi \rangle := \int_{\overline{\Omega} \times \overline{\Omega}} \int_0^1 |\omega_{x,y}'(t)| \, \phi (\omega_{x,y}(t)) \, \mathrm{d}t \, \mathrm{d}\gamma (x,y)
\end{equation}
for all $\phi \in C(\overline{\Omega})$. The vector measure $p_\gamma$ and the scalar measure $\sigma_\gamma$ are related in the following way: if $u$ is a Kantorovich potential, we have
\begin{equation*}
\omega_{x,y}'(t) = y - x = - |x-y| \frac{x-y}{|x-y|} = - |x-y| \nabla u(\omega_{x,y}(t))
\end{equation*}
for all $t \in (0,1)$ and $x,y \in \mathrm{supp}(\gamma)$. Thus, $\langle p_\gamma, \varphi \rangle = \langle \sigma_\gamma, -\varphi \cdot \nabla u \rangle$, so
\begin{equation}
p_\gamma = -\nabla u \cdot \sigma_\gamma.
\end{equation}
Hence, $p_\gamma$ is absolutely continuous with respect to $\sigma_\gamma$ and $|p_\gamma| \leq \sigma_\gamma$. Hence,
\begin{equation*}
\min \eqref{eq:kantorovich} = \int_{\overline{\Omega} \times \overline{\Omega}} |x-y| \, \mathrm{d}\gamma = \int_{\overline{\Omega} \times \overline{\Omega}} \int_0^1 |\omega_{x,y}'(t)| \, \mathrm{d}t \, \mathrm{d}\gamma(x,y) = \sigma_\gamma(\overline{\Omega}) \geq |w_\gamma|(\overline{\Omega}) \geq \min \eqref{eq:beckmannproblem}.
\end{equation*}
Hence, $\min \eqref{eq:kantorovich} = \min \eqref{eq:beckmannproblem}$, and from an optimal transport plan $\gamma$ we can construct a solution to the Beckmann problem \eqref{eq:beckmannproblem}. Moreover, we have $|p_\gamma| = \sigma_\gamma$. On the other hand, it can be shown that every solution to \eqref{eq:beckmannproblem} is of the form $p = p_\gamma$ for some optimal transport plan $\gamma$, see \cite[Theorem 4.13]{San2015}. We summarise the above discussion in the following Theorem.

\begin{theorem}\label{thm:beckmannkantorovichequivalence} Let $\Omega\subset \mathbb{R}^d$ be an open bounded convex set. Then, the problems \eqref{eq:kantorovich} and \eqref{eq:beckmannproblem} both admit solutions, and are equivalent in the following sense: \\
(1) Their minimal values coincide, i.e. $\min \eqref{eq:kantorovich} = \min \eqref{eq:beckmannproblem}$; \\
(2) Given an optimal transport plan $\gamma \in \mathcal{M}^+(\overline{\Omega} \times \overline{\Omega})$ in \eqref{eq:kantorovich}, we can construct a solution $p_\gamma \in \mathcal{M}(\overline{\Omega}; \mathbb{R}^d)$ to \eqref{eq:beckmannproblem}; moreover, $|p_\gamma| = \sigma_\gamma$; \\
(3) Given a solution $p \in \mathcal{M}(\overline{\Omega}; \mathbb{R}^2)$ to \eqref{eq:beckmannproblem}, we can construct an optimal transport plan $\gamma \in \mathcal{M}^+(\overline{\Omega} \times \overline{\Omega})$ in \eqref{eq:kantorovich} such that $p = p_\gamma$; moreover, $|p_\gamma| = \sigma_\gamma$.
\end{theorem}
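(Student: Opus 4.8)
The plan is to assemble the three claims from the chain of inequalities that links \eqref{eq:kantorovich}, \eqref{eq:dualtokantorovich} and \eqref{eq:beckmannproblem}, in the following order: first establish existence of an optimal transport plan and Kantorovich duality; then prove the easy inequality $\min\eqref{eq:beckmannproblem}\ge\min\eqref{eq:kantorovich}$; then build the measure $p_\gamma$ from an optimal plan $\gamma$ and use the transport density $\sigma_\gamma$ to show $|p_\gamma|(\overline\Omega)\le\min\eqref{eq:kantorovich}$, which closes the loop and simultaneously yields (1), (2), and the existence of a solution to \eqref{eq:beckmannproblem}; and finally obtain (3) by representing an arbitrary Beckmann minimiser as some $p_\gamma$.

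First I would recall the standard facts. Assuming $f\neq 0$ (the case $f=0$ being trivial), the admissible set in \eqref{eq:kantorovich} is nonempty, since it contains the normalised product $f^+\otimes f^-/f^+(\overline\Omega)$, whose marginals are $f^+$ and $f^-$ by the mass balance condition; it is weak-$*$ compact, as all competitors are probability-like measures of fixed total mass supported in the fixed compact set $\overline\Omega\times\overline\Omega$; and $\gamma\mapsto\int|x-y|\,\mathrm d\gamma$ is weak-$*$ continuous because $|x-y|$ is bounded and continuous on $\overline\Omega\times\overline\Omega$. Hence a minimiser exists. Kantorovich duality gives $\min\eqref{eq:kantorovich}=\sup\eqref{eq:dualtokantorovich}$ and, through the complementary slackness identity $\int(|x-y|-(\phi(x)-\phi(y)))\,\mathrm d\gamma=0$, that any optimal $\gamma$ is concentrated on $\{\phi(x)-\phi(y)=|x-y|\}$ for any Kantorovich potential $\phi$, i.e.\ mass is transported along transport rays. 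Next, for any $p$ admissible in \eqref{eq:beckmannproblem} and any $\phi\in C^1(\overline\Omega)$ with $|\nabla\phi|\le 1$, integration by parts gives $\int_{\overline\Omega}\phi\,\mathrm df=\int_{\overline\Omega}(-\nabla\phi)\cdot\mathrm dp\le|p|(\overline\Omega)$; taking the supremum over such $\phi$ (uniformly approximating $1$-Lipschitz functions by $C^1$ ones) yields $\min\eqref{eq:kantorovich}=\sup\eqref{eq:dualtokantorovich}\le\min\eqref{eq:beckmannproblem}$.

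For the reverse inequality and part (2), I would fix an optimal plan $\gamma$ and define $p_\gamma$ and the transport density $\sigma_\gamma$ by \eqref{eq:definitionofpgamma} and \eqref{eq:definitionofsigma}; both are finite measures, since $\sigma_\gamma(\overline\Omega)=\int_{\overline\Omega\times\overline\Omega}\int_0^1|\omega_{x,y}'(t)|\,\mathrm dt\,\mathrm d\gamma=\int|x-y|\,\mathrm d\gamma<\infty$. Testing $p_\gamma$ against $\nabla\phi$ for $\phi\in C^1(\overline\Omega)$ and applying the fundamental theorem of calculus in the variable $t$ shows that $p_\gamma$ satisfies the divergence constraint $\mathrm{div}\,p_\gamma=f$ with the convention of \eqref{eq:beckmannproblem}. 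Picking a Kantorovich potential $u$ and using that along each transport ray $\omega_{x,y}'(t)=y-x=-|x-y|\nabla u(\omega_{x,y}(t))$ for a.e.\ $t\in(0,1)$, so that the integrand in \eqref{eq:definitionofpgamma} equals $-\nabla u$ times the integrand in \eqref{eq:definitionofsigma}, I obtain $p_\gamma=-\nabla u\cdot\sigma_\gamma$, hence $|p_\gamma|\le\sigma_\gamma$ as measures. Combining everything,
\[
\min\eqref{eq:kantorovich}=\int|x-y|\,\mathrm d\gamma=\sigma_\gamma(\overline\Omega)\ge|p_\gamma|(\overline\Omega)\ge\min\eqref{eq:beckmannproblem}\ge\min\eqref{eq:kantorovich},
\]
so all inequalities are equalities: $\min\eqref{eq:kantorovich}=\min\eqref{eq:beckmannproblem}$, the measure $p_\gamma$ is a minimiser of \eqref{eq:beckmannproblem} (in particular \eqref{eq:beckmannproblem} has a solution), and $|p_\gamma|(\overline\Omega)=\sigma_\gamma(\overline\Omega)$ combined with $|p_\gamma|\le\sigma_\gamma$ forces $|p_\gamma|=\sigma_\gamma$. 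This proves (1) and (2).

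Finally, for (3) I would invoke the converse construction of \cite[Theorem 4.13]{San2015}: given a minimiser $p$ of \eqref{eq:beckmannproblem}, a Smirnov-type decomposition of $p$ into a superposition of curves (equivalently, a Dacorogna--Moser-type flow argument) produces a measure $\gamma\in\mathcal M^+(\overline\Omega\times\overline\Omega)$ with marginals $f^+$ and $f^-$ and with $\int|x-y|\,\mathrm d\gamma\le|p|(\overline\Omega)=\min\eqref{eq:beckmannproblem}=\min\eqref{eq:kantorovich}$, hence optimal, and such that the associated $p_\gamma$ coincides with $p$; the identity $|p_\gamma|=\sigma_\gamma$ is then the one already established in (2). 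The main obstacle is exactly this last step: while the forward direction is elementary once the transport-density machinery is in place, recovering a transport plan from a divergence-constrained vector measure rests on the nontrivial decomposition of such measures into transport curves, and I would handle it by citing \cite{San2015} rather than reproducing the argument, since nothing in the rest of the proof depends on its internal details.
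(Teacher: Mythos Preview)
Your proposal is correct and follows essentially the same approach as the paper: the paper's discussion preceding the theorem establishes the inequality $\min\eqref{eq:beckmannproblem}\ge\max\eqref{eq:dualtokantorovich}=\min\eqref{eq:kantorovich}$ via the same duality/test-function argument, constructs $p_\gamma$ and $\sigma_\gamma$ by the same formulas, uses a Kantorovich potential to get $p_\gamma=-\nabla u\cdot\sigma_\gamma$ and hence $|p_\gamma|\le\sigma_\gamma$, closes the same chain of inequalities, and cites \cite[Theorem~4.13]{San2015} for part~(3). Your write-up is slightly more explicit about existence of optimal plans and about checking the divergence constraint on $p_\gamma$, but the argument is identical.
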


The equivalence between the two-dimensional least gradient problem \eqref{eq:leastgradientproblem} and the Monge-Kantorovich problem \eqref{eq:kantorovich} comes from combining Theorems \ref{thm:lgpbeckmannequivalence} and \ref{thm:beckmannkantorovichequivalence}. Given a solution to \eqref{eq:leastgradientproblem}, we may construct an optimal transport plan $\gamma$ for \eqref{eq:kantorovich} with $f^\pm = (\partial_\tau g)_\pm$; in the other direction, since $|p_\gamma| = \sigma_\gamma$, we may recover a solution to \eqref{eq:leastgradientproblem} from an optimal transport plan $\gamma$ as soon as the transport density $\sigma_\gamma$ gives no mass to the boundary, i.e. $\sigma_\gamma(\partial\Omega) = 0$. An important special case is when $\Omega$ is strictly convex. Using an equivalent formula for transport density, i.e. for every Borel set $A \subset \overline{\Omega}$
\begin{equation}\label{eq:definitionofsigmaversiontwo}
\sigma_\gamma(A) = \int_{\overline{\Omega} \times \overline{\Omega}} \mathcal{H}^1([x,y] \cap A) \, \mathrm{d}\gamma(x,y),
\end{equation}
we have that if $\Omega$ is strictly convex, then for any optimal transport plan we have that $\sigma_\gamma(\partial\Omega) = 0$, and the correspondence between problems is one-to-one.

This link between the two problems was exploited for the first time in \cite{DS}. For a strictly convex domain $\Omega$, the authors studied the boundary-to-boundary optimal transport problem. The setting is a bit unusual, since a standard assumption used to prove estimates on the transport density is that either $f^+$ or $f^-$ is in $L^1(\Omega)$; here, this is clearly not the case, since both measures are supported on the boundary $\partial\Omega$. Nonetheless, the authors showed that if at least one of the measures $f^\pm$ is atomless, then the optimal transport plan is unique and induced by a map, and proved several variants of regularity estimates on the transport density; let us recall the version most useful to us in the course of the paper (see \cite[Proposition 3.2, Remark 3.4]{DS}; for the exact definition of uniform convexity, see Section \ref{sec:SBVregularity}).

\begin{theorem}\label{thm:dweiksantambrogio}
Suppose that $\Omega \subset \mathbb{R}^d$ is an open, bounded, uniformly convex set. Suppose that $f^+ \in L^p(\partial\Omega)$ with $p \in [1,2)$. Let $\gamma$ be the optimal transport plan between $f^+$ and some $f^- \in \mathcal{M}^+(\partial\Omega)$. Then, $\sigma_\gamma \in L^p(\Omega)$.
\end{theorem}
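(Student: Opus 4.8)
The plan is to reduce the estimate to a change of variables along the transport rays and then to control the resulting Jacobian using uniform convexity; I present the argument in two dimensions, which is the setting of this paper, the general case following along the same lines with a more careful use of uniform convexity as in \cite{DS}. Since $f^+ \in L^p(\partial\Omega)$ with $p \geq 1$, the measure $f^+$ is non-atomic, so (as recalled before the statement) the optimal plan $\gamma$ is unique and of the form $\gamma = (\mathrm{id},T)_{\#} f^+$ for a Borel map $T \colon \partial\Omega \to \partial\Omega$; in particular $\sigma_\gamma$ is uniquely determined and, by \eqref{eq:definitionofsigma},
\[ \int_{\overline\Omega} \phi \, \mathrm{d}\sigma_\gamma = \int_{\partial\Omega} \ell(x)\int_0^1 \phi\big((1-t)x+tT(x)\big)\,\mathrm{d}t\,f^+(x)\,\mathrm{d}\mathcal{H}^{1}(x), \qquad \ell(x):=|x-T(x)|, \]
for every $\phi \in C(\overline\Omega)$.

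Set $\Phi(x,t) := (1-t)x + tT(x)$. Because optimal transport rays are pairwise disjoint in the interior of $\Omega$, the map $\Phi$ is injective off an $(\mathcal{H}^{1}\otimes\mathrm{d}t)$-negligible set and its image exhausts, up to an $\mathcal{L}^2$-null set, the union $\mathcal{R}$ of the open transport rays, while $\sigma_\gamma$ vanishes on $\Omega\setminus\mathcal{R}$. After checking the regularity of $T$ needed to invoke the area formula — this is the point where one uses the fine structure of transport rays — one concludes that $\sigma_\gamma \ll \mathcal{L}^2$ with
\[ \sigma_\gamma(z) = \frac{\ell(x)\,f^+(x)}{J\Phi(x,t)}, \qquad z=\Phi(x,t), \]
$J\Phi$ being the $2$-dimensional Jacobian of $\Phi$ relative to $\mathcal{H}^{1}\otimes\mathrm{d}t$, and hence, again by the area formula,
\[ \int_\Omega \sigma_\gamma^p\,\mathrm{d}z = \int_{\partial\Omega} \ell(x)^p\,f^+(x)^p\int_0^1 J\Phi(x,t)^{1-p}\,\mathrm{d}t\,\mathrm{d}\mathcal{H}^{1}(x). \]

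The heart of the matter is a lower bound on $J\Phi$. Writing $\partial\Omega$ in the arc-length parametrisation and $T$ as $s\mapsto\sigma(s)$, the columns of $D\Phi$ are $\partial_t\Phi = T(x)-x$ and $\partial_s\Phi = (1-t)\tau(x) + t\,\sigma'(s)\,\tau(T(x))$, with $\tau$ the unit tangent and $n$ the inward normal, so that $\det D\Phi = (1-t)\,\ell(x)\,\langle e(x),n(x)\rangle + t\,\ell(x)\,\sigma'(s)\,\langle e(x),n(T(x))\rangle$, where $e(x):=(T(x)-x)/\ell(x)$. The two summands have the same sign: the ray enters $\Omega$ at $x$ and leaves it at $T(x)$, so $\langle e(x),n(x)\rangle > 0 > \langle e(x),n(T(x))\rangle$, while non-crossing of transport rays forces $\sigma' \leq 0$ almost everywhere — otherwise two nearby chords would have interleaved endpoints on $\partial\Omega$ and would therefore intersect. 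Consequently $J\Phi(x,t) \geq (1-t)\,\ell(x)\,\langle e(x),n(x)\rangle$, and uniform convexity of $\Omega$ — say $\Omega \subseteq B(x+Rn(x),R)$ for every $x\in\partial\Omega$, which yields $\langle e(x),n(x)\rangle \geq \ell(x)/(2R)$ upon expanding $|T(x)-x-Rn(x)|^2\leq R^2$ — gives
\[ J\Phi(x,t) \geq \frac{(1-t)\,\ell(x)^2}{2R}. \]

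Plugging this into the previous identity, using $1-p\leq 0$ together with $\ell(x)\leq\mathrm{diam}(\Omega)$, and noting that $p<2$ is exactly the condition making $t\mapsto(1-t)^{1-p}$ integrable on $(0,1)$, we obtain
\[ \int_\Omega \sigma_\gamma^p\,\mathrm{d}z \leq (2R)^{p-1}\int_{\partial\Omega}\ell(x)^{2-p}f^+(x)^p\int_0^1(1-t)^{1-p}\,\mathrm{d}t\,\mathrm{d}\mathcal{H}^{1}(x) \leq \frac{(2R)^{p-1}\,\mathrm{diam}(\Omega)^{2-p}}{2-p}\,\|f^+\|_{L^p(\partial\Omega)}^p, \]
so $\sigma_\gamma \in L^p(\Omega)$. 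I expect the main obstacle to be the Jacobian lower bound: making the no-cancellation argument rigorous (the a.e.\ sign of $\sigma'$, the regularity of $T$ and thus the legitimacy of the area formula, and the behaviour near transport rays degenerating to a point), and, in dimension $d\geq 3$, upgrading the naive factor $(1-t)^{d-1}$ coming from the tangential directions to an integrable one, which requires exploiting uniform convexity along the whole bundle of transport rays through $z$ rather than only at the endpoint $x$.
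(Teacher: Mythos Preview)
The paper does not prove this theorem; it is quoted from \cite[Proposition 3.2, Remark 3.4]{DS} and used as a black box. So there is no ``paper's own proof'' to compare against directly, but your sketch follows the Dweik--Santambrogio strategy quite closely: parametrise the transported region by $(x,t)\mapsto(1-t)x+tT(x)$, compute the Jacobian, use uniform convexity to bound $\langle e(x),n(x)\rangle\gtrsim\ell(x)$, and check that $p<2$ makes $(1-t)^{1-p}$ integrable. The final estimate you obtain is the correct one.

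The one genuine methodological difference is how the regularity of $T$ is handled. You invoke the area formula directly after ``checking the regularity of $T$'', and your sign argument for the second Jacobian term presupposes that $T$ (equivalently your $\sigma$) is a.e.\ differentiable along $\partial\Omega$. In this boundary-to-boundary setting that is not available a priori: $T$ is merely Borel, and even monotone maps on a curve need a little care before one differentiates. Dweik and Santambrogio sidestep this entirely by first taking $f^-$ finitely atomic, so that $T$ takes only finitely many values and the change of variables is elementary on each cone of rays (this is exactly the computation reproduced later in the paper in the proof of Proposition~\ref{prop:locallyfinite}), and then passing to the limit using weak stability of optimal plans and lower semicontinuity of the $L^p$ norm. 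Your direct route is morally the same and gives the same constant, but to make it rigorous you would either have to carry out that approximation step anyway or prove enough structure on $T$ (countable Lipschitz decomposition coming from cyclical monotonicity on a one-dimensional curve) to justify the area formula; you correctly flag this as the main obstacle.
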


The main application of this result so far is the $W^{1,p}$ regularity of solutions to the least gradient problem. Suppose that $d = 2$ and $g \in W^{1,p}(\partial\Omega)$ for $p \in [1,2)$, where $g$ is the boundary datum in problem \eqref{eq:leastgradientproblem}; then, $f = \partial_\tau g \in L^p(\partial\Omega)$. By Theorem \ref{thm:dweiksantambrogio}, we have $\sigma_\gamma \in L^p(\Omega)$. Let $u \in BV(\Omega)$ be the unique solution to \eqref{eq:leastgradientproblem}, constructed from the unique optimal transport plan $\gamma$. Since we have $|Du| = |p_\gamma| = \sigma_\gamma$, we actually have $\nabla u \in L^p(\Omega)$; since by a maximum principle we also have $u \in L^\infty(\Omega)$, we have that $u \in W^{1,p}(\Omega)$.

Finally, let us note that the equivalence presented above (and Theorem \ref{thm:dweiksantambrogio}) holds also in an anisotropic version of the least gradient problem. Suppose that $\varphi$ is a strictly convex norm on $\mathbb{R}^2$; then, in light of the analysis performed in \cite{DS}, for strictly convex domains $\Omega$ we still have a one-to-one correspondence between gradients of $BV$ functions and vector-valued measures with zero divergence, and the reasoning presented in this Section still works in the anisotropic case. We summarise this in the following Remark.

\begin{remark}
Suppose that $\Omega \subset \mathbb{R}^2$ is an open bounded convex set. Suppose that $\varphi$ is a strictly convex norm on $\mathbb{R}^2$. Then, the infimal values in the anisotropic least gradient problem
\begin{equation}\tag{aLGP}\label{eq:anisotropiclgp}
\min \bigg\{ \int_\Omega \varphi(Du), \quad u \in BV(\Omega), \quad u|_{\partial\Omega} = g. \bigg\}
\end{equation}
and the anisotropic Beckmann problem
\begin{equation}\label{eq:anisotropicbeckmann}\tag{aBP}
  \min \bigg\{ \int_{\bar{\Omega}} \varphi(R_{- \frac{\pi}{2}} p)\,:\, p \in \mathcal{M}(\overline{\Omega}; \mathbb{R}^2),\,\,\mathrm{div}\,\, p = f \bigg\}
\end{equation}
coincide. Moreover, there is a correspondence between minimisers of the two problems as in Theorem \ref{thm:lgpbeckmannequivalence}. The anisotropic Beckmann problem is in turn equivalent to the Monge-Kantorovich problem with the cost given by the rotation norm of $\varphi$, i.e. $\varphi(R_{-\frac{\pi}{2}} \cdot)$, as in Theorem \ref{thm:beckmannkantorovichequivalence}.
\end{remark}

A weaker version of this statement holds also when $\varphi$ is not strictly convex; the correspondence between the anisotropic least gradient problem and the anisotropic Beckmann problem remains in place, but in Theorem \ref{thm:beckmannkantorovichequivalence} the last point is no longer true.
Note that whenever $\varphi$ is strictly convex, the transport rays are line segments and they may not intersect at an interior point; on the other hand, when $\varphi$ is not strictly convex, these properties may fail. Since any solution of the form $p_\gamma$ is concentrated on transport rays which are line segments, it is clear that in the non-strictly convex case it is possible that not every solution to the Beckmann problem is of the form $p_\gamma$. For these reasons, any regularity results require that the norm $\varphi$ is strictly convex. At the end of each Section, we will remark whether the results in this Section hold also for anisotropic norms and whether we need the norm to be strictly convex.

\section{Dual formulations}\label{sec:dualproblems}

In this Section, we extend the relationship between the least gradient problem \eqref{eq:leastgradientproblem} and the Kantorovich problem \eqref{eq:kantorovich} to their respective dual problems. Namely, we study the relationship between the maximisation problem \eqref{eq:dualtolgp} (see \cite{Gor2020TAMS,Mor})
\begin{equation*}
\sup \bigg\{ \int_{\partial\Omega} [\mathbf{z},\nu] \, g \, \mathrm{d}\mathcal{H}^{1}: \mathbf{z} \in \mathcal{Z} \bigg\},
\end{equation*}
where $g \in BV(\partial\Omega)$ and
\begin{equation*}
\mathcal{Z} = \bigg\{ \mathbf{z} \in L^\infty(\Omega;\mathbb{R}^2), \quad \mathrm{div}(\mathbf{z}) = 0, \quad \| \mathbf{z} \|_\infty \leq 1 \mbox{ a.e. in } \Omega \bigg\},
\end{equation*}
with the maximisation problem \eqref{eq:dualtokantorovich} (see \cite{San2015,Vil})
\begin{equation*}
\sup\bigg\{\int_{\overline{\Omega}} \phi\,\mathrm{d}(f^+ - f^-)\,:\,\phi \in  \mathrm{Lip}_1(\overline{\Omega})\bigg\}
\end{equation*}
whose solutions are known as Kantorovich potentials. Here, $f^\pm \in \mathcal{M}(\partial\Omega)$ with $f^+(\partial\Omega) = f^-(\partial\Omega)$. Moreover, the normal trace $[\mathbf{z},\nu]$ is understood in the weak sense, for details see \cite{Anz} or \cite{CF}. By a standard reasoning in duality theory, both problems admit solutions; this is proved for problem \eqref{eq:dualtolgp} in \cite{Mor} and for problem \eqref{eq:dualtokantorovich} for instance in \cite{San2015,Vil}. Since the infimal values in the primal problems are equal, it is clear that the supremal values in the dual problems coincide. The goal of this Section is to prove that from a solution of one problem we may recover a solution of the other problem and to study some consequences of this fact for the structure of solutions to the least gradient problem.

The following result is the main result in this Section and describes the relationship between the dual problems \eqref{eq:dualtolgp} and \eqref{eq:dualtokantorovich}.

\begin{theorem}\label{thm:dualproblems}
Let $\Omega\subset \mathbb{R}^2$ be an open bounded convex set. Then, the problems \eqref{eq:dualtokantorovich} and \eqref{eq:dualtolgp} are equivalent in the following sense: \\
(1) Their supremal values coincide, i.e. $\sup \eqref{eq:dualtokantorovich} = \sup \eqref{eq:dualtolgp}$; \\
(2) Given a maximiser $\phi \in \mathrm{Lip}_1(\overline{\Omega})$ of \eqref{eq:dualtokantorovich}, we can construct a maximiser $\mathbf{z} \in L^\infty(\Omega; \mathbb{R}^2)$ of \eqref{eq:dualtolgp}; moveover, $\mathbf{z} = R_{\frac{\pi}{2}} \nabla \phi$ in $\Omega$; \\
(3) Given a maximiser $\mathbf{z} \in L^\infty(\Omega;\mathbb{R}^2)$ of \eqref{eq:dualtolgp}, we may construct a maximiser $\phi \in \mathrm{Lip}_1(\overline{\Omega})$ of \eqref{eq:dualtokantorovich}; moreover, $\mathbf{z} = R_{\frac{\pi}{2}} \nabla \phi$ in $\Omega$.
\end{theorem}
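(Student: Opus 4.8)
The plan is to transfer the equivalence through the intermediate Beckmann problem, exactly as in the primal setting (Theorems \ref{thm:lgpbeckmannequivalence} and \ref{thm:beckmannkantorovichequivalence}), but now at the level of the dual variables. The key observation is that the rotation $R_{\frac{\pi}{2}}$ sends gradients of Lipschitz functions to divergence-free fields and exchanges normal and tangential components on $\partial\Omega$, which is precisely the geometric mechanism that made $p = R_{-\frac{\pi}{2}} Du$ work. So for part (2), given a Kantorovich potential $\phi \in \mathrm{Lip}_1(\overline{\Omega})$, I set $\mathbf{z} = R_{\frac{\pi}{2}} \nabla \phi$. Then $\|\mathbf{z}\|_\infty = \|\nabla\phi\|_\infty \leq 1$ a.e., and $\mathrm{div}(\mathbf{z}) = \mathrm{div}(R_{\frac{\pi}{2}}\nabla\phi) = 0$ in the distributional sense in $\Omega$, so $\mathbf{z} \in \mathcal{Z}$ is admissible. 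It remains to compute $\int_{\partial\Omega} [\mathbf{z},\nu] \, g \, \mathrm{d}\mathcal{H}^1$ and show it equals $\int_{\overline{\Omega}} \phi \, \mathrm{d}(f^+ - f^-)$. Here I would use the normal trace: since $\mathbf{z} = R_{\frac{\pi}{2}}\nabla\phi$, its normal trace $[\mathbf{z},\nu]$ on $\partial\Omega$ equals the tangential derivative $\partial_\tau(\phi|_{\partial\Omega})$ (in the same way that $[p,\nu] = \partial_\tau(Tu)$ in Proposition \ref{prop:recoveryinL1}). Then an integration by parts on $\partial\Omega$ gives $\int_{\partial\Omega} \partial_\tau \phi \, g \, \mathrm{d}\mathcal{H}^1 = -\int_{\partial\Omega} \phi \, \mathrm{d}(\partial_\tau g) = -\int_{\partial\Omega} \phi \, \mathrm{d}f = \int_{\partial\Omega}\phi\,\mathrm{d}(f^- - f^+)$. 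A sign must be tracked carefully here — the rotation direction and the orientation convention for $\partial_\tau$ determine whether one gets $f^+ - f^-$ or its negative; if a sign appears, replace $\phi$ by $-\phi$, which is again a valid $1$-Lipschitz competitor and negates $f^+ - f^-$ correctly, or adjust the rotation to $R_{-\frac{\pi}{2}}$. Either way, one obtains $\int_{\partial\Omega}[\mathbf{z},\nu]\,g\,\mathrm{d}\mathcal{H}^1 = \int_{\overline{\Omega}}\phi\,\mathrm{d}(f^+-f^-)$, so $\mathbf{z}$ is a maximiser since the supremal values coincide (which follows from equality of the primal infimal values, as noted in the text).

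For part (3), I go the other way. Given a maximiser $\mathbf{z} \in \mathcal{Z}$ of \eqref{eq:dualtolgp}, I have $\mathbf{z} \in L^\infty(\Omega;\mathbb{R}^2) \subset L^1(\Omega;\mathbb{R}^2)$ with $\mathrm{div}(R_{-\frac{\pi}{2}}(R_{\frac{\pi}{2}}\mathbf{z})) $... more directly: the field $\mathbf{z}$ is divergence-free, so $R_{-\frac{\pi}{2}}\mathbf{z}$ is curl-free (a rotation of a divergence-free field in two dimensions), hence by Proposition \ref{prop:recoveryinL1} applied appropriately — or simply by the Poincaré lemma on the simply connected set $\Omega$ — there exists $\phi \in W^{1,\infty}(\Omega)$ with $\nabla\phi = R_{-\frac{\pi}{2}}\mathbf{z}$, i.e. $\mathbf{z} = R_{\frac{\pi}{2}}\nabla\phi$. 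Then $|\nabla\phi| = |\mathbf{z}| \leq 1$ a.e., so after extending $\phi$ to $\overline{\Omega}$ by continuity (and using convexity of $\Omega$, the $1$-Lipschitz bound persists on the closure) we get $\phi \in \mathrm{Lip}_1(\overline{\Omega})$. The same normal-trace/integration-by-parts computation as above, run in reverse, shows $\int_{\overline{\Omega}}\phi\,\mathrm{d}(f^+-f^-) = \int_{\partial\Omega}[\mathbf{z},\nu]\,g\,\mathrm{d}\mathcal{H}^1 = \sup\eqref{eq:dualtolgp} = \sup\eqref{eq:dualtokantorovich}$, so $\phi$ is a Kantorovich potential. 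Part (1) is then immediate (and is in fact used along the way), since the primal infimal values coincide by the equivalence of \eqref{eq:leastgradientproblem}, \eqref{eq:beckmannproblem}, \eqref{eq:kantorovich} and strong duality holds for all the pairs involved.

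The main obstacle I anticipate is the rigorous handling of the normal trace $[\mathbf{z},\nu]$ and its identification with $\partial_\tau(\phi|_{\partial\Omega})$ as measures on $\partial\Omega$. The vector field $\mathbf{z}$ is only in $L^\infty$ with distributional divergence zero inside $\Omega$, so its normal trace lives in the Anzellotti/Chen–Frid sense as an element of $L^\infty(\partial\Omega)$ paired against $\mathcal{H}^1\llcorner\partial\Omega$; one needs to verify that this trace coincides with the distributional tangential derivative of the boundary trace of the potential $\phi$, and that the pairing $\int_{\partial\Omega}[\mathbf{z},\nu]\,g\,\mathrm{d}\mathcal{H}^1$ is well-defined when $g \in BV(\partial\Omega)$ is merely of bounded variation (so that $\partial_\tau g$ is a measure, possibly with atoms). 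The cleanest route is probably to first prove the identity for smooth $\phi$ and $g$ via the classical divergence theorem, controlling both sides, and then pass to the limit using the strict density of smooth functions and the continuity properties of the Anzellotti pairing; the convexity of $\Omega$ guarantees $\partial\Omega$ is Lipschitz, which is what that theory requires. A secondary subtlety is the orientation bookkeeping in the rotation, which I would pin down once and for all by testing on a simple explicit example (e.g. $\Omega$ a disc, $g$ affine along an arc).
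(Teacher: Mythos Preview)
Your proposal is correct and follows essentially the same route as the paper: set $\mathbf{z} = R_{\frac{\pi}{2}}\nabla\phi$ (respectively recover $\phi$ from $\mathbf{z}$ via Proposition~\ref{prop:recoveryinL1}), check admissibility, identify the normal trace with the tangential derivative, and integrate by parts on $\partial\Omega$. The sign you flag is resolved in the paper exactly as you suggest---by writing $R_{\frac{\pi}{2}}\nabla\phi = -R_{-\frac{\pi}{2}}\nabla\phi$ and applying Proposition~\ref{prop:recoveryinL1} to get $[\mathbf{z},\nu] = -\partial_\tau\phi$ (and in part~(3) taking $\phi = -u$); the integration by parts is packaged as the mass-balance identity $\int_{\partial\Omega}\mathrm{d}\partial_\tau(\phi g) = 0$, which already covers $g \in BV(\partial\Omega)$ and $\phi$ Lipschitz, so the trace worry you raise is handled directly by Proposition~\ref{prop:recoveryinL1} without further approximation.
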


Notice that the direction of the rotation is opposite to the direction of rotation in Theorem \ref{thm:lgpbeckmannequivalence}.

\begin{proof}
(1) This follows immediately from point (1) of Theorem \ref{thm:lgpbeckmannequivalence}, because
\begin{equation*}
\sup \eqref{eq:dualtolgp} = \inf \eqref{eq:leastgradientproblem} = \inf \eqref{eq:kantorovich} = \sup \eqref{eq:dualtokantorovich}.
\end{equation*}
(2) Suppose that $\phi \in \mathrm{Lip}_1(\overline{\Omega})$ is a maximiser in \eqref{eq:dualtokantorovich}. Take $\mathbf{z} = R_{\frac{\pi}{2}} \nabla (\phi|_\Omega) \in L^\infty(\Omega; \mathbb{R}^2)$. Then, $\mathbf{z}$ is admissible in \eqref{eq:dualtolgp}, since $\mathrm{div}(\mathbf{z}) = 0$ as distributions and $\| \mathbf{z} \|_\infty = \| \nabla \phi \|_\infty \leq 1$.

Note that since $\phi$ is Lipschitz and $g \in BV(\partial\Omega)$, we have that $\phi g \in BV(\partial\Omega)$ and it satisfies a mass balance condition
\begin{equation}\label{eq:lipbvmassbalance}
0 = \int_{\partial\Omega} \mathrm{d} \partial_\tau(\phi g) = \int_{\partial\Omega} \phi \, \mathrm{d}(\partial_\tau g) + \int_{\partial\Omega} (\partial_\tau \phi) \, g \, \mathrm{d}\mathcal{H}^1.
\end{equation}
Since $\mathbf{z} = R_{\frac{\pi}{2}} \nabla (\phi|_\Omega) = - R_{-\frac{\pi}{2}} \nabla (\phi|_\Omega)$, Proposition \ref{prop:recoveryinL1} implies that $[\mathbf{z},\nu] = - \partial_\tau \phi$. By equation \eqref{eq:lipbvmassbalance}, 
\begin{equation*}
\sup \eqref{eq:dualtolgp} = \sup \eqref{eq:dualtokantorovich} = \int_{\partial\Omega} \phi \, \mathrm{d}f = \int_{\partial\Omega} \phi \, d(\partial_\tau g) =  \int_{\partial\Omega} (- \partial_\tau \phi) \, g \, \mathrm{d}\mathcal{H}^1 =
\int_{\partial\Omega} [\mathbf{z},\nu] \, g \, \mathrm{d}\mathcal{H}^1,
\end{equation*}
hence $\mathbf{z}$ is a maximiser of \eqref{eq:dualtolgp}. \\

(3) Suppose that $\mathbf{z} \in L^\infty(\Omega; \mathbb{R}^2)$ is a maximiser in \eqref{eq:dualtolgp}. Since $\Omega$ is bounded, we also have $\mathbf{z} \in L^1(\Omega; \mathbb{R}^2)$, so by Proposition \ref{prop:recoveryinL1} there exists $u \in W^{1,1}(\Omega)$ such that $\mathbf{z} = R_{-\frac{\pi}{2}} \nabla u$. Since $\mathbf{z} \in L^\infty(\Omega;\mathbb{R}^2)$ with $\| \mathbf{z} \|_\infty \leq 1$, we also have $\nabla u \in L^\infty(\Omega;\mathbb{R}^2)$ with $\| \nabla u \|_\infty \leq 1$. Hence, $u \in W^{1,\infty}(\Omega)$, so it is $1$-Lipschitz in $\Omega$; in particular, it can be extended to a $1$-Lipschitz function on $\overline{\Omega}$ (we identify $u$ with its extension and write $u \in \mathrm{Lip}_1(\overline{\Omega})$), so it is admissible in problem \eqref{eq:dualtokantorovich}.

Moreover, Proposition \ref{prop:recoveryinL1} implies that $[\mathbf{z},\nu] = \partial_\tau u$. Again using equation \eqref{eq:lipbvmassbalance}, we get
\begin{equation*}
\sup \eqref{eq:dualtokantorovich} = \sup \eqref{eq:dualtolgp} = \int_{\partial\Omega} [\mathbf{z},\nu] \, g \, \mathrm{d}\mathcal{H}^1 = \int_{\partial\Omega} (\partial_\tau u) \, g \, \mathrm{d}\mathcal{H}^1 = - \int_{\partial\Omega} u \, d(\partial_\tau g) = \int_{\partial\Omega} (-u) \, \mathrm{d}f,
\end{equation*}
hence $\phi = -u$ is a maximiser of \eqref{eq:dualtokantorovich}. In particular, $\mathbf{z} = R_{-\frac{\pi}{2}} \nabla u = R_{\frac{\pi}{2}} \nabla \phi$.
\end{proof}

Hence, we may express the solution to the dual problem \eqref{eq:dualtolgp} to the least gradient problem via a Kantorovich potential of the corresponding optimal transport problem and vice versa. Of course, solutions to both problems are in general not unique; for instance, when $g \equiv c \in \mathbb{R}$, then any admissible vector field $\mathbf{z} \in \mathcal{Z}$ is a solution of \eqref{eq:dualtolgp}, and this corresponds to the fact that for $f = 0$ any $1$-Lipschitz function is a solution to problem \eqref{eq:dualtokantorovich}. In light of this, an important point is that any solution to problem \eqref{eq:dualtokantorovich} determines the frame of transport rays for all solutions to the Monge-Kantorovich problem, see for instance \cite{San2015}; similarly, any solution to problem \eqref{eq:dualtolgp} determines the frame of level sets to all solutions to the least gradient problem in the following sense (see \cite{Mor}): the vector field $\overline{\mathbf{z}} = -\mathbf{z}$ satisfies the Euler-Lagrange equations for the least gradient problem introduced in \cite{MRL}, in particular it is divergence-free
\begin{equation*}
(\overline{\mathbf{z}}, Du) = |Du| \qquad \mbox{ as measures in } \Omega,
\end{equation*}
where $(\overline{\mathbf{z}}, Du)$ is the Anzelotti pairing defined in \cite{Anz}, which is a generalisation of the pointwise product $\overline{\mathbf{z}} \cdot \nabla u$ to the case when $u \in BV(\Omega)$ and $\overline{\mathbf{z}}$ is a bounded vector field with divergence in $L^N(\Omega)$. Hence, if $\phi$ is a Kantorovich potential, then $\overline{\mathbf{z}} = R_{-\frac{\pi}{2}} \nabla \phi$. We illustrate this by revisiting a classical example, attributed to John Brothers and appearing for instance in \cite{MRL,SZ}.

\begin{example}\label{ex:brothers}
Suppose that $\Omega = B(0,1) \subset \mathbb{R}^2$. We will first give an example with continuous boundary data and then modify it to an example with discontinuous boundary data. Set $g_1(x,y) = x^2 - y^2$; since the boundary datum is continuous, the solution to the least gradient problem is unique (see \cite{SWZ}) and we may easily check that it is given by the formula
\begin{equation*}
u(x,y) = \threepartdef{2x^2 - 1}{\mathrm{if} \, |x| > \frac{\sqrt{2}}{2},}{1 - 2y^2}{\mathrm{if} \, |y| > \frac{\sqrt{2}}{2},}{0}{\mathrm{otherwise.}}
\end{equation*}
Since $u$ is Lipschitz, the condition $(\overline{\mathbf{z}},Du) = |Du|$ reduces to $\overline{\mathbf{z}} = \frac{\nabla u}{|\nabla u|}$ almost everywhere on the support of $\nabla u$. Hence, up to choosing a representative, we have that $\overline{\mathbf{z}} = (\mathrm{sgn}(x),0)$ if $|x| > \frac{\sqrt{2}}{2}$, $\overline{\mathbf{z}} = (0,-\mathrm{sgn}(y))$ if $|y| > \frac{\sqrt{2}}{2}$, and $\overline{\mathbf{z}}$ is not uniquely defined on the square $|x|,|y| < \frac{\sqrt{2}}{2}$. For instance, we may take
\begin{equation}
\overline{\mathbf{z}}(x,y) = \fourpartdef{(1,0)}{\mathrm{if} \, -x < y < x,}{(0,-1)}{\mathrm{if} \, y > x \, \mathrm{and} \, y > -x,}{(-1,0)}{\mathrm{if} \, x < y < -x,}{(0,1)}{\mathrm{if} \, y < x \, \mathrm{and} \, y < -x.}
\end{equation}
Now, we look at the problem from the optimal transport angle. Using angular coordinates on $\partial\Omega$, we see that $g_1(\theta) = \cos(2\theta)$. Clearly, $g_1 \in C(\partial\Omega) \cap BV(\partial\Omega)$, and its tangential derivative is given by the formula $f(\theta) = - 2 \sin(2\theta)$. By Theorem \ref{thm:dualproblems}, a Kantorovich potential corresponding to $f$ can be obtained by taking $\nabla \phi_1 = R_{-\frac{\pi}{2}} \mathbf{z} = R_{\frac{\pi}{2}} \overline{\mathbf{z}}$, so up to an additive constant we get
\begin{equation}
\phi_1(x,y) = \fourpartdef{-y + \frac{\sqrt{2}}{2}}{\mathrm{if} \, -x < y < x,}{-x + \frac{\sqrt{2}}{2}}{\mathrm{if} \, y > x \, \mathrm{and} \, y > -x,}{y + \frac{\sqrt{2}}{2}}{\mathrm{if} \, x < y < -x,}{x + \frac{\sqrt{2}}{2}}{\mathrm{if} \, y < x \, \mathrm{and} \, y < -x.}
\end{equation}
We chose the additive constant in the standard way, so that the minimal value of $\phi$ equals zero. The situation is presented on the left hand side of Figure \ref{fig:pbpotencjal}. However, it is not difficult to construct another Kantorovich potential. Notice that the function $\phi_2$ such that its level sets are horizontal line segments for $|x| > \frac{\sqrt{2}}{2}$ (and vertical line segments for $|y| > \frac{\sqrt{2}}{2}$) and in the square $|x|,|y| < \frac{\sqrt{2}}{2}$ each level set is a part of a circle with centre at a point $(\pm \frac{\sqrt{2}}{2}, \pm \frac{\sqrt{2}}{2})$, has the same boundary values as $\phi_1$ and that it is $1$-Lipschitz. Hence, it is admissible in problem \eqref{eq:dualtokantorovich} and it is optimal by optimality of $\phi_1$. The situation is presented on the right hand side of Figure \ref{fig:pbpotencjal}.

\begin{figure}[h]
    \includegraphics[scale=0.1]{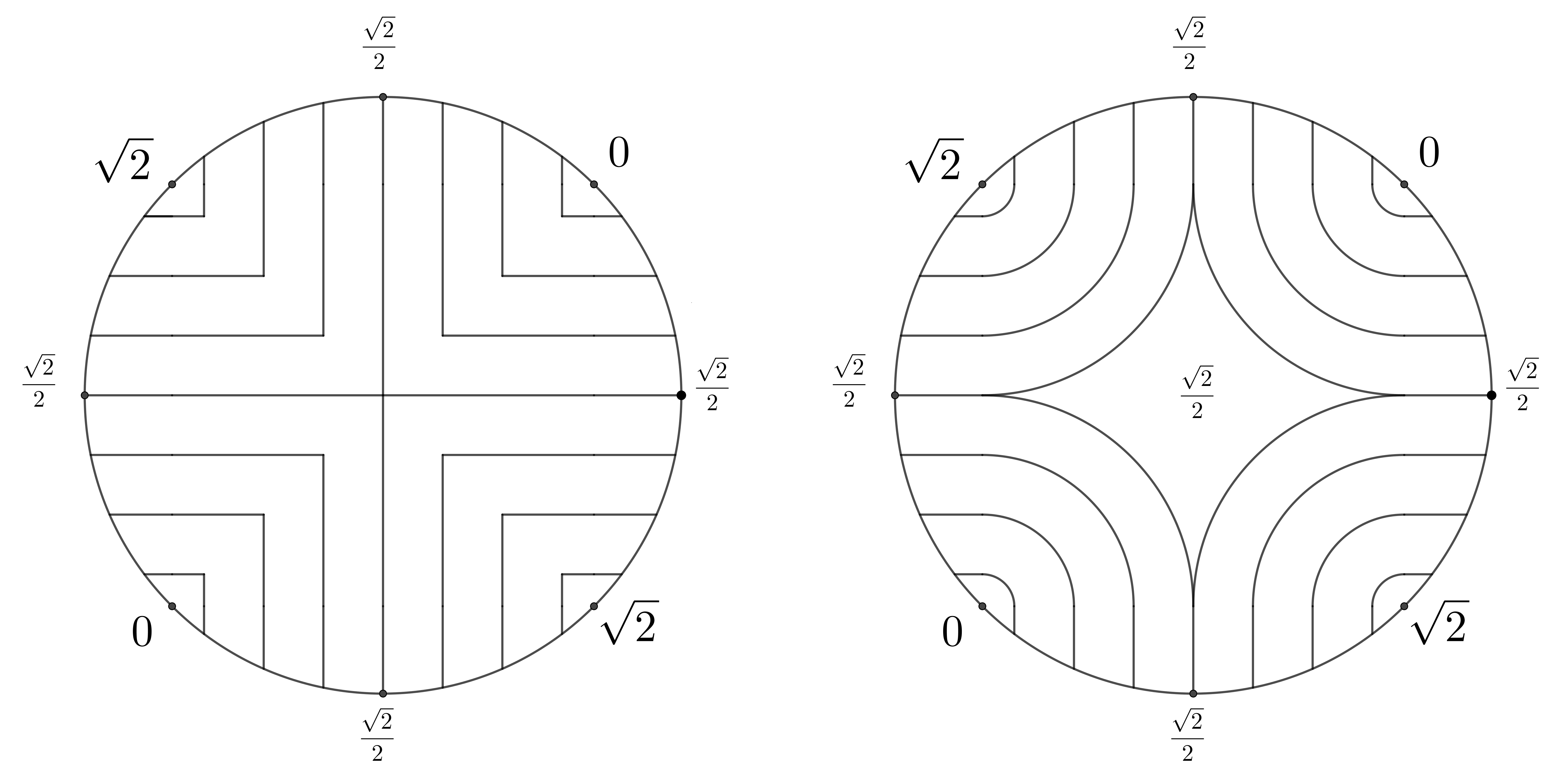}
    \caption{Two possible Kantorovich potentials}
    \label{fig:pbpotencjal}
\end{figure}

Finally, when we modify the boundary datum a bit, so that
\begin{equation*}
g_2(x,y) = \twopartdef{x^2 - y^2 + 1}{\mathrm{if} \, |x| > \frac{\sqrt{2}}{2},}{x^2 - y^2 - 1}{\mathrm{if} \, |y| > \frac{\sqrt{2}}{2},}
\end{equation*}
then the functions of the form
\begin{equation*}
u_\lambda(x,y) = \threepartdef{2x^2}{\mathrm{if} \, |x| > \frac{\sqrt{2}}{2},}{- 2y^2}{\mathrm{if} \, |y| > \frac{\sqrt{2}}{2},}{\lambda}{\mathrm{otherwise}}
\end{equation*}
where $\lambda \in [-1,1]$ are all possible solutions to the least gradient problem (see \cite{Gor2018JMAA,MRL}). The situation is presented in Figure \ref{fig:pbnieciagly}. It was shown in \cite{MRL} that the vector fields $\overline{\mathbf{z}}$ which solve the Euler-Lagrange equations (so $- \overline{\mathbf{z}}$ is a solution of \eqref{eq:dualtolgp}) for $g_1$ and $g_2$ are the same. Hence, by Theorem \ref{thm:dualproblems} also the Kantorovich potentials corresponding to $f_1 = \partial_\tau g_1$ and $f_2 = \partial_\tau g_2$ are the same. Notice that even though the solution to problem \eqref{eq:leastgradientproblem} is not unique, all the solutions share the same frame of superlevel sets, and it can be described using Theorem \ref{thm:dualproblems} in terms of any Kantorovich potential: for any solution $u$ of \eqref{eq:leastgradientproblem} and any Kantorovich potential $\phi$, whenever a level line of $u$ and a level line of $\phi$ intersect, they make a right angle.

\begin{figure}[h]
    \includegraphics[scale=0.3]{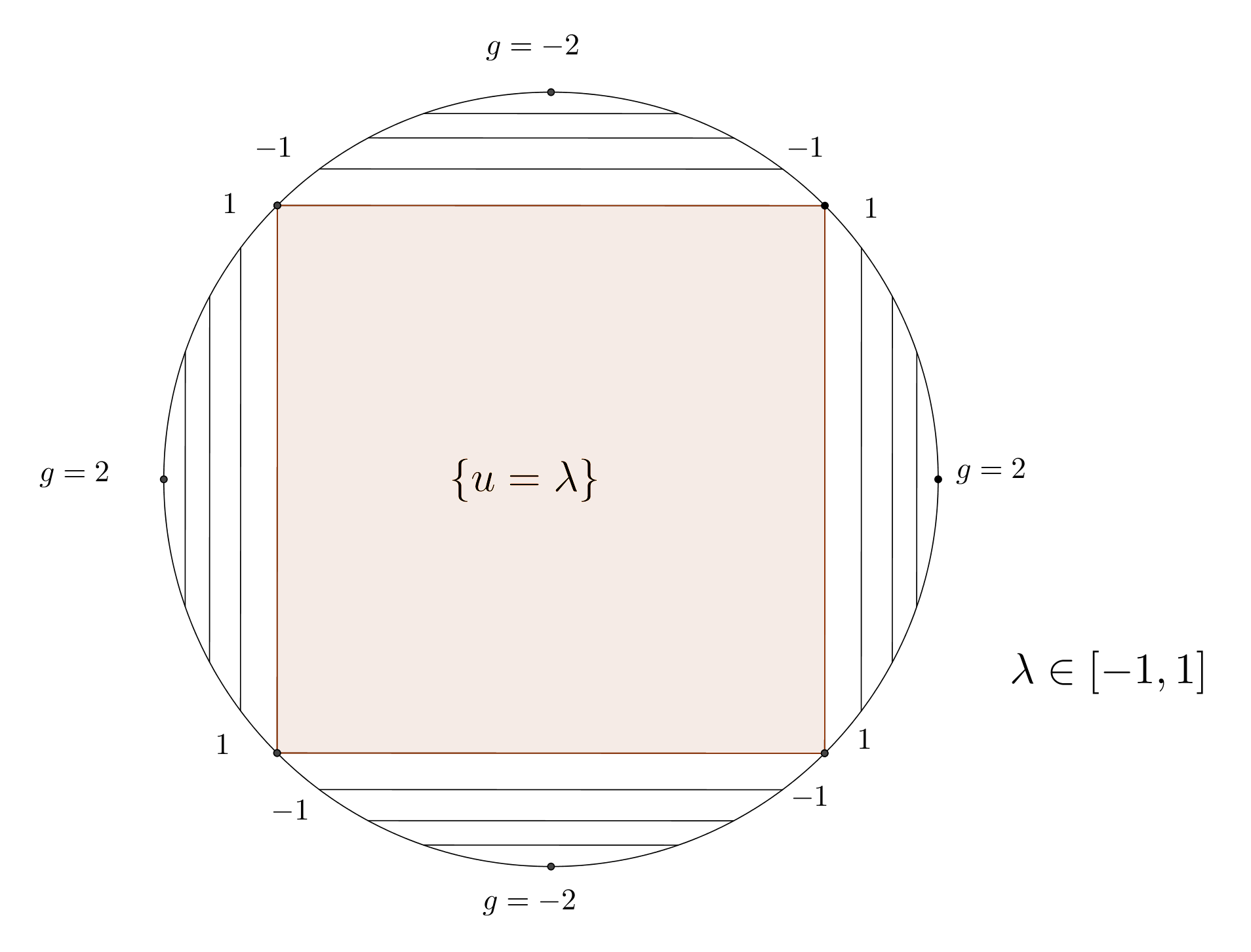}
    \caption{Multiple solutions to the least gradient problem}
    \label{fig:pbnieciagly}
\end{figure}
\end{example}

In fact, given boundary data $g \in BV(\partial\Omega)$, it may be easier to construct Kantorovich potentials and from it reconstruct the vector field $\mathbf{z}$ which is the solution of problem \eqref{eq:dualtolgp}, as we saw in the Example above. Furthermore, notice that on the set on which the solution $u$ to the least gradient problem was not constant, level lines of the Kantorovich potentials were perpendicular to level lines of $u$; indeed, Theorem \ref{thm:dualproblems} can be understood as an informal link between solutions of the primal problem \eqref{eq:leastgradientproblem} and \ref{eq:kantorovich}; namely, suppose that $l_t$ is a connected component of the boundary of a superlevel set $\{ u > t \}$ of $u$, a solution to \eqref{eq:leastgradientproblem}; by \cite[Theorem 1]{BGG}, it is a line segment. If $u$ is regular enough, then the condition $(\overline{\mathbf{z}},Du) = |Du|$ implies that $\mathbf{z}$ is a unit vector perpendicular to $l_t$, at least for almost all $t$. The gradient of the corresponding Kantorovich potential $\phi$ is in turn a unit vector perpendicular to $\overline{\mathbf{z}}$, hence it is parallel to $l_t$; but since the gradient of the Kantorovich potential is well-defined and of length one, this means that $l_t$ is a transport ray. Hence, level sets of a solution to \eqref{eq:leastgradientproblem} have the interpretation of transport rays.

Finally, let us note that the equivalence given in Theorem \ref{thm:dualproblems} holds also in an anisotropic version of the least gradient problem. Strict convexity of the norm is not required; we only use Proposition \ref{prop:recoveryinL1}, which is a pointwise result regardless of the norm used. Hence, we get the following result.

\begin{remark}
Suppose that $\Omega \subset \mathbb{R}^2$ is an open bounded convex set. Suppose that $\varphi$ is any norm on $\mathbb{R}^2$. Then, the supremal values in the dual of the anisotropic least gradient problem
\begin{equation*}
\sup \bigg\{ \int_{\partial\Omega} [\mathbf{z},\nu] \, g \, \mathrm{d}\mathcal{H}^{1}: \mathbf{z} \in \mathcal{Z} \bigg\},
\end{equation*}
where
\begin{equation*}
\mathcal{Z} = \bigg\{ \mathbf{z} \in L^\infty(\Omega;\mathbb{R}^2), \quad \mathrm{div}(\mathbf{z}) = 0, \quad \varphi^0(\mathbf{z}(x)) \leq 1 \mbox{ a.e. in } \Omega \bigg\},
\end{equation*}
and the dual of the Kantorovich problem with the cost given by the rotation norm of $\varphi$, i.e. $\varphi(R_{-\frac{\pi}{2}} \cdot)$
\begin{equation*}
\sup\bigg\{\int_{\overline{\Omega}} \phi\,\mathrm{d}(f^+ - f^-): \quad \phi \in  \mathrm{Lip}(\overline{\Omega}), \quad \varphi^0(R_{-\frac{\pi}{2}}  \nabla \phi) \leq 1 \mbox{ a.e. in } \Omega \bigg\}
\end{equation*}
coincide. Here, $\varphi^0$ denotes the polar norm of $\varphi$. Moreover, there is a correspondence between minimisers of the two problems as in Theorem \ref{thm:dualproblems}.
\end{remark}

\section{Applications to regularity}\label{sec:SBVregularity}

In this Section, we inspect the regularity of solutions to the least gradient problem. The first results in this direction obtained using optimal transport methods were proved for uniformly convex domains in \cite{DS}. There, assuming $W^{1,p}(\partial\Omega)$ regularity of the boundary datum with $p \leq 2$, the authors proved that the (unique) solution lies in $W^{1,p}(\Omega)$. This was achieved in the following way: $W^{1,p}$ regularity of the boundary datum in problem \eqref{eq:leastgradientproblem} corresponds to $L^p$ regularity of the boundary datum in problem \eqref{eq:kantorovich}. Then, the authors prove $L^p$ estimates on the transport density $\sigma_\gamma$; using the relation $|Du| = |p| = |\sigma_\gamma|$ between solutions of problems \eqref{eq:leastgradientproblem}, \eqref{eq:kantorovich} and \eqref{eq:beckmannproblem}, we see that this corresponds to $W^{1,p}$ regularity of the solution to the least gradient problem.

The goal of this Section is to study the case when the boundary datum is less regular than in the situation studied in \cite{DS}. We start by proving the main result in this Section, namely Theorem \ref{thm:sbvregularity}, which says that on uniformly convex domains if the boundary datum lies in $SBV(\partial\Omega)$, then any solution lies in $SBV(\Omega)$. Note that the boundary data are of bounded variation, but they are no longer continuous; hence, there exists solutions to problem \eqref{eq:leastgradientproblem} and they may fail to be unique (as we saw in Example \ref{ex:brothers}). In the corresponding Kantorovich problem, it means that the optimal transport plan may fail to be unique or induced by a map. Nonetheless, the result holds for every minimiser. Then, we focus on some consequences of the proof, in particular on structure results in the case when $g$ has only finitely many discontinuities. 

Since at some point we will rely on results in \cite{DS}, we will assume that $\Omega$ is uniformly convex. The definition used there is a bit more general than the standard one used in the literature (i.e. $\partial\Omega$ is smooth and the mean curvature is bounded from below by a positive constant); namely, the authors of \cite{DS} assume that there exists $R > 0$ such that for every $x \in \partial\Omega$ and every inner unit vector $\mathbf{n}$ we have $\Omega \subset B(x+R\mathbf{n},R)$. For smooth domains, this corresponds to the fact that all principal curvatures are larger than $\frac{1}{R}$, so the two definitions coincide. We will adopt this definition of uniform convexity in order not to require $\partial\Omega$ to be smooth.

\begin{theorem}\label{thm:sbvregularity}
Suppose that $\Omega \subset \mathbb{R}^2$ is uniformly convex. Let $g \in SBV(\partial\Omega)$. If $u \in BV(\Omega)$ is a solution to problem \eqref{eq:leastgradientproblem} with boundary data $g$, then $u \in SBV(\Omega)$.
\end{theorem}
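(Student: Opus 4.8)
The plan is to transfer the problem to the optimal transport side, split the optimal plan according to the atomic structure of $\partial_\tau g$, and estimate each piece separately. Throughout I will use the following standard criterion: a function $v\in BV(\Omega)$ with $\Omega\subset\mathbb R^2$ belongs to $SBV(\Omega)$ as soon as its singular part $D^sv$ (with respect to $\mathcal L^2$) is concentrated on a Borel set which is $\sigma$-finite with respect to $\mathcal H^1$. Indeed, writing $D^sv=D^jv+D^cv$, the Cantor part $D^cv$ gives no mass to any set of finite $\mathcal H^1$-measure, hence none to an $\mathcal H^1$-$\sigma$-finite carrier of $D^sv$, while it also gives no mass to the complement of that carrier; so $D^cv=0$ and $v\in SBV(\Omega)$.

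Fix a solution $u\in BV(\Omega)$ of \eqref{eq:leastgradientproblem} with datum $g$. By Theorem~\ref{thm:lgpbeckmannequivalence}(2) the field $p=R_{-\frac{\pi}{2}}Du$ solves \eqref{eq:beckmannproblem}, and by Theorem~\ref{thm:beckmannkantorovichequivalence}(3) there is an optimal plan $\gamma$ in \eqref{eq:kantorovich}, with $f^\pm=(\partial_\tau g)_\pm$, for which $p=p_\gamma$ and $|p_\gamma|=\sigma_\gamma$. Since $Du$ is a measure on the open set $\Omega$ and rotations are isometries, $|Du|=|p|=\sigma_\gamma$ as measures on $\Omega$ (in particular $\sigma_\gamma(\partial\Omega)=0$), so it suffices to show that $\sigma_\gamma\llcorner\Omega$ is the sum of a measure absolutely continuous with respect to $\mathcal L^2$ and a measure concentrated on an $\mathcal H^1$-$\sigma$-finite set. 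Because $g\in SBV(\partial\Omega)$, we may write $\partial_\tau g=h\,\mathcal H^1\llcorner\partial\Omega+\sum_k c_k\delta_{x_k}$ with $h\in L^1(\partial\Omega)$ and $\sum_k|c_k|<\infty$. Taking positive and negative parts, and using that the absolutely continuous and atomic parts of a measure are mutually singular, we obtain $f^\pm=f^\pm_{\mathrm{ac}}+f^\pm_{\mathrm{at}}$, where $f^\pm_{\mathrm{ac}}\in L^1(\partial\Omega)$ (as densities) give no mass to the countable set $A:=\{x_k\}$, and $f^\pm_{\mathrm{at}}$ are atomic, concentrated on $A$.

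Now restrict $\gamma$ to the four product sets determined by $A$ and $\partial\Omega\setminus A$ in the two variables: $\gamma=\gamma_{\mathrm{ac,ac}}+\gamma_{\mathrm{ac,at}}+\gamma_{\mathrm{at,ac}}+\gamma_{\mathrm{at,at}}$. Each summand is a nonnegative plan whose two marginals are dominated by the corresponding pieces of $f^+$ and $f^-$, and each summand is optimal between its own marginals, since restrictions of optimal plans are optimal (the cost $|x-y|$ being bounded on $\overline\Omega\times\overline\Omega$). Because the transport density is linear in the plan (formula \eqref{eq:definitionofsigmaversiontwo}), $\sigma_\gamma=\sigma_{\gamma_{\mathrm{ac,ac}}}+\sigma_{\gamma_{\mathrm{ac,at}}}+\sigma_{\gamma_{\mathrm{at,ac}}}+\sigma_{\gamma_{\mathrm{at,at}}}$. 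For the three pieces with at least one marginal in $L^1(\partial\Omega)$, Theorem~\ref{thm:dweiksantambrogio} with $p=1$ gives $\sigma_{\gamma_{\mathrm{ac,ac}}},\sigma_{\gamma_{\mathrm{ac,at}}}\in L^1(\Omega)$, and applying it to $\gamma_{\mathrm{at,ac}}$ after exchanging the two variables (which does not change the transport density) gives $\sigma_{\gamma_{\mathrm{at,ac}}}\in L^1(\Omega)$; this is the only place where uniform convexity of $\Omega$ is used. For the remaining piece, both marginals of $\gamma_{\mathrm{at,at}}$ are atomic and supported in $A$, hence $\gamma_{\mathrm{at,at}}$ itself is atomic, say $\gamma_{\mathrm{at,at}}=\sum_{k,l}c_{kl}\,\delta_{(x_k,x_l)}$ with $\sum_{k,l}c_{kl}<\infty$; then $\sigma_{\gamma_{\mathrm{at,at}}}=\sum_{k,l}c_{kl}\,\mathcal H^1\llcorner[x_k,x_l]$ is concentrated on the countable union $\bigcup_{k,l}[x_k,x_l]$ of segments, which is $\mathcal H^1$-$\sigma$-finite. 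Summing, $\sigma_\gamma\llcorner\Omega=\rho\,\mathcal L^2+\mu$ with $\rho\in L^1(\Omega)$ and $\mu$ concentrated on an $\mathcal H^1$-$\sigma$-finite set; by the criterion above, $u\in SBV(\Omega)$.

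The step I expect to require most care is gluing the reduction together rigorously: one must check that the optimal plan attached to the \emph{given} solution $u$ really satisfies $|Du|=\sigma_\gamma$ on $\Omega$ (through Theorems~\ref{thm:lgpbeckmannequivalence} and \ref{thm:beckmannkantorovichequivalence}, without needing uniqueness of $\gamma$), that the restricted plans $\gamma_{\bullet,\bullet}$ inherit optimality as well as the absolute continuity or atomicity of their marginals (a positive measure dominated by an atomless, resp.\ atomic, measure is atomless, resp.\ atomic), and that Theorem~\ref{thm:dweiksantambrogio} does apply in the borderline case $p=1$ and is insensitive to which marginal carries the regularity. Once these points are settled, the $SBV$ conclusion is immediate.
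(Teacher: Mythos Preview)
Your proof is correct and follows the same overall strategy as the paper: pass to the optimal transport side, split the optimal plan into pieces according to the atomic structure of the marginals, apply Theorem~\ref{thm:dweiksantambrogio} to the pieces having at least one absolutely continuous marginal, and observe that the purely atomic piece produces a transport density carried by a countable union of segments.

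The difference lies in how the decomposition is carried out. You restrict $\gamma$ directly to the four product sets $(A^c\times A^c)$, $(A^c\times A)$, $(A\times A^c)$, $(A\times A)$ with $A=\{x_k\}$ the set of atoms of $\partial_\tau g$; the marginals of each restriction are then trivially dominated by the corresponding absolutely continuous or atomic parts of $f^\pm$, so their regularity is immediate. The paper instead builds the decomposition through the geometry of transport rays: it introduces sets $A_1,\ldots,A_4\subset\overline\Omega$ (unions of rays with prescribed endpoint behaviour), passes to $B_n=(A_n\cap\partial\Omega)\times(A_n\cap\partial\Omega)$, and then has to check carefully that the intersections carry no $\gamma$-mass and that the second marginal of $\gamma_2$ (say) really avoids every $q\in D^-$. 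Your approach bypasses this bookkeeping entirely and is the more economical route to the SBV statement itself; the paper's ray-based partition is heavier here but pays off later, since the sets $A_n$ are reused verbatim in the proof of Proposition~\ref{prop:locallyfinitejumpset} to localise the transport density near the jump set. Your final caveats (optimality of restrictions via $c$-cyclical monotonicity, that a measure dominated by an $L^1$ density on $\partial\Omega$ is itself atomless so Theorem~\ref{thm:dweiksantambrogio} applies, and the symmetry of the cost allowing the roles of $f^+,f^-$ to be swapped) are all routine and correctly identified.
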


In the course of the proof, supposing that $\mu$ and $\nu$ are two positive measures, we say that $\mu \leq \nu$ if for all Borel sets $B$ we have $\mu(B) \leq \nu(B)$. In particular, in this case $\mu$ is absolutely continuous with respect to $\nu$.

\begin{proof}
Denote $f = \partial_\tau g$. Since $g \in SBV(\Omega)$, we have that $f = f_{ac} + f_{at}$, where $f_{ac}$ is absolutely continuous and $f_{at}$ is atomic; there is no Cantor part. We decompose $f$ into a positive and negative part, namely $f = f^+ - f^-$ and recall that the least gradient problem corresponds to the optimal transport problem between $f^+$ and $f^-$; to every $u \in BV(\Omega)$ which is a solution of the least gradient problem, there exists an optimal transport plan $\overline{\gamma} \in \mathcal{M}^+(\overline{\Omega} \times \overline{\Omega})$ with marginals $f^+$ and $f^-$.

We will decompose the Monge-Kantorovich problem
\begin{equation}\label{eq:probleminproofofsbv}
    \min\bigg\{\int_{\overline{\Omega} \times \overline{\Omega}} |x-y|\,\mathrm{d}\gamma\,:\,\gamma \in  \mathcal{M}^+(\overline{\Omega} \times \overline{\Omega}),\,(\Pi_x)_{\#}\gamma=f^+ \,\,\mathrm{and}\,\,(\Pi_y)_{\#} \gamma =f^-\bigg\}
\end{equation}
into several problems of the same type. Then, we will compute the transport densities and prove that they are either absolutely continuous or concentrated on a set of Hausdorff dimension one. This will imply that $Du$ contains no Cantor part, so $u \in SBV(\Omega)$.

{\bf Step 1.} Let us introduce the following notation: denote by $D^+$ the (at most countable) set of atoms of $f^+$ and by $D^-$ the (at most countable) set of atoms of $f^-$. Since $f^+$ and $f^-$ have no common mass, these sets are disjoint. Moreover, for $x \in \overline{\Omega}$, denote by $\Delta_x$ the set of all points from transport rays passing through the point $x$. It is clear that $\Delta_x$ is closed: if we denote
\begin{equation*}
h_x(y) = |\phi(y) - \phi(x)| - |x-y|,
\end{equation*}
then since $\phi$ is $1$-Lipschitz, we have that $h \leq 0$ and $\Delta_x = h_x^{-1}(0)$, hence it is a closed set.

Now, we will separate $\overline{\Omega}$ into a few Borel subsets. Denote
\begin{equation*}
A_1 := \bigcup_{p \in D^+, \, q \in D^-} [p, q]. 
\end{equation*}
Clearly, $A_1$ is a Borel set and its Hausdorff dimension equals one (although its $\mathcal{H}^1$ measure may be infinite). In particular, all transport rays with both endpoints in the atoms of $f$ lie in $A_1$; in other words, $A_1$ is the set on which the transport between atomic parts of $f^+$ and $f^-$ takes place. We will later see that the jump set of $u$ will be a subset of this set.

Denote
\begin{equation*}
A_2 := \bigg( \bigg( \bigcup_{p \in D^+} \Delta_{p} \bigg) \setminus A_1 \bigg) \cup D^+.
\end{equation*}
Since $\Delta_x$ is closed for every $x \in \overline{\Omega}$, $A_2$ is a Borel set. In particular, all transport rays with one endpoint in an atom of $f^+$ and the other endpoint not in an atom of $f^-$ lie in $A_2$. In other words, $A_2$ is the set on which the transport between the atomic part of $f^+$ and the absolutely continuous part of $f^-$ takes place.

Similarly, denote
\begin{equation*}
A_3 := \bigg( \bigg( \bigcup_{q \in D^-} \Delta_{q} \bigg) \setminus A_1 \bigg) \cup D^-.
\end{equation*}
Again, $A_3$ is a Borel set and all transport rays with one endpoint in an atom of $f^-$ and the other endpoint not in an atom of $f^+$ lie in $A_3$. In other words, $A_3$ is the set on which the transport between the absolutely continuous part of $f^+$ and the atomic part of $f^-$ takes place.

Finally, denote
\begin{equation*}
A_4 := \bigg( \Omega \setminus \bigg( A_1 \cup A_2 \cup A_3 \bigg) \bigg) \cup \bigg( \partial\Omega \setminus \bigg( D^+ \cup D^- \bigg) \bigg).
\end{equation*}
Clearly, $A_4$ is a Borel set and all transport rays with both endpoints not in an atom of $f$ lie in $A_4$. In other words, $A_4$ is the set on which the transport between the absolutely continuous parts of $f^+$ and $f^-$ takes place.

Note that the union of the sets $A_n$ (for $n=1,2,3,4$) equals $\overline{\Omega}$. These sets are not disjoint, but we have some control over the intersections: in particular, $A_1 \cap A_2 = D^+$ and $A_1 \cap A_3 = D^-$. Moreover, since the set of points which belong to at least two transport rays is countable, the other intersections are at most countable and do not contain any atom of $f$.

{\bf Step 2.} First, notice that the optimal transport plan $\overline{\gamma}$ is concentrated on $\partial\Omega \times \partial\Omega$. Namely, since $(\Pi_x)_\# \overline{\gamma} = f^+$, we have $\overline{\gamma}(\Omega \times \overline{\Omega}) = (\Pi_x)_\# \overline{\gamma} (\Omega) = f^+(\Omega) = 0$; similarly, since $(\Pi_y)_\# \overline{\gamma} = f^-$, we have $\overline{\gamma}(\overline{\Omega} \times \Omega) = (\Pi_y)_\# \overline{\gamma}(\Omega) = f^-(\Omega) = 0$. Hence, we have $\overline{\gamma} = \overline{\gamma}|_{\partial\Omega \times \partial\Omega}$. Now, we will separate $\partial\Omega \times \partial\Omega$ into a few Borel subsets and study restrictions of $\overline{\gamma}$ to these subsets. Recall that two transport rays may only intersect at their endpoints and whenever $(x_0,y_0)$ belongs to the support of an optimal transport plan, then $x_0$ and $y_0$ must belong to a common transport ray.

We make the following decomposition of $\partial\Omega \times \partial\Omega$: for $n = 1,2,3$ we set
\begin{equation*}
B_n = (A_n \cap \partial\Omega) \times (A_n \cap \partial\Omega)
\end{equation*}
and we set
\begin{equation*}
B_4 = \{ (x,y) \in \partial\Omega \times \partial\Omega: x,y \notin B_1 \cup B_2 \cup B_3 \}.
\end{equation*}
By definition, we have $\bigcup_{n=1}^4 B_n = \partial\Omega \times \partial\Omega$. As was the case with the sets $A_n$, again the sets $B_n$ are not disjoint, but we have some control over the intersections; namely, for $n=1,2,3$ we have $B_n \cap B_4 = \emptyset$, $B_1 \cap B_2 = \bigcup_{p \in D^+} \{ (p, p) \}$, $B_1 \cap B_3 = \bigcup_{q \in D^-} \{ (q, q) \}$, and $B_2 \cap B_3$ is countable and each of the points is of the form $\{ (x,x) \}$, where $x \in A_2 \cap A_3 \cap \partial\Omega$.

Then, we decompose the optimal plan $\overline{\gamma}$ as follows; for $n=1,2,3,4$, we set $\gamma_n = \overline{\gamma}|_{B_n}$. Since all the sets $B_n$ are Borel, all the measures $\gamma_n$ are positive Radon measures. It is clear that $\gamma_n \leq \overline{\gamma}$ as measures. Moreover, notice that an optimal transport plan gives no mass to any single point in the diagonal $(x,x)$ - otherwise $x$ would be an atom of both $f^+$ and $f^-$. Hence, since all the intersections $B_m \cap B_n$ for $m,n = 1,2,3,4$ are at most countable unions of such points, we have that $\sum_{n=1}^4 \gamma_n = \overline{\gamma}|_{\partial\Omega \times \partial\Omega} = \overline{\gamma}$.

{\bf Step 3.} Now, we construct the auxiliary problems. First, let us notice that since for all $n = 1,2,3,4$ we have $\gamma_n \leq \overline{\gamma}$ as measures, then
\begin{equation}\label{eq:estimateonmarginals1}
(\Pi_x)_{\#} \gamma_n \leq (\Pi_x)_{\#} \overline{\gamma} = f^+
\end{equation}
and 
\begin{equation}\label{eq:estimateonmarginals2}
(\Pi_y)_{\#} \gamma_n \leq (\Pi_y)_{\#} \overline{\gamma} = f^-.
\end{equation}
Then, let us notice that since $\sum_{n=1}^4 \gamma_n = \overline{\gamma}$, we also have
\begin{equation*}
\sum_{n=1}^4 (\Pi_x)_{\#} \gamma_n = f^+ \quad \mathrm{and} \quad \sum_{n=1}^4 (\Pi_y)_{\#} \gamma_n = f^-.
\end{equation*}
Now, notice that each $\gamma_n$ solves the problem
\begin{equation}
\min\bigg\{\int_{\overline{\Omega} \times \overline{\Omega}} |x-y|\,\mathrm{d}\gamma\,:\,\gamma \in  \mathcal{M}^+(\overline{\Omega} \times \overline{\Omega}),\,(\Pi_x)_{\#}\gamma = (\Pi_x)_{\#} \gamma_n \,\,\mathrm{and}\,\,(\Pi_y)_{\#} \gamma = (\Pi_y)_{\#} \gamma_n \bigg\}.
\end{equation}
By standard optimal transport theory, this problem has a solution $\gamma_n'$. If $\gamma_n$ is not a solution, then we may replace $\gamma_n$ by $\gamma_n'$, and then
\begin{equation*}
\gamma' = \sum_{i=1}^4 \gamma_n'   
\end{equation*}
is admissible in the Kantorovich problem \eqref{eq:probleminproofofsbv} and satisfies
\begin{equation*}
\int_{\overline{\Omega} \times \overline{\Omega}} |x-y| \, \mathrm{d} \gamma' \leq \sum_{n=1}^4 \int_{\overline{\Omega} \times \overline{\Omega}} |x-y| \, \mathrm{d} \gamma_n' < \sum_{n=1}^4 \int_{\overline{\Omega} \times \overline{\Omega}} |x-y| \, \mathrm{d} \gamma_n = \int_{\overline{\Omega} \times \overline{\Omega}} |x-y| \, \mathrm{d} \overline{\gamma},
\end{equation*}
hence $\overline{\gamma}$ was not an optimal transport plan, contradiction.

{\bf Step 4.} Since $\overline{\gamma} = \sum_{n=1}^4 \gamma_i$, the transport densities $\sigma_{\overline{\gamma}}$ of $\overline{\gamma}$ and $\sigma_{\gamma_n}$ of $\gamma_n$ defined by equation \eqref{eq:definitionofsigma} satisfy
\begin{equation*}
\sigma_{\overline{\gamma}} = \sum_{n=1}^4 \sigma_{\gamma_n}.
\end{equation*}
We will study separately the transport densities $\sigma_{\gamma_n}$. First, we will prove that for $n=2,3,4$ we have $\sigma_{\gamma_n} \in L^1(\Omega)$; then, we will study in more detail the structure of $\sigma_{\gamma_1}$.

First, notice that by equation \eqref{eq:estimateonmarginals1}, each of the measures $(\Pi_x)_\# \gamma_n$ is a sum of an absolutely continuous measure and an atomic measure, and any atom of $(\Pi_x)_\# \gamma_n$ is also an atom of $f^+$ (a point in $D^+$). Similarly, by \eqref{eq:estimateonmarginals2}, also each $(\Pi_y)_\# \gamma_n$ is a sum of an absolutely continuous measure and an atomic measure, and any atom of $(\Pi_y)_\# \gamma_n$ is also an atom of $f^-$ (a point in $D^-$).

We will prove that $(\Pi_y)_{\#} \gamma_2$ is absolutely continuous. By the previous paragraph, it suffices to show that no point $q \in D^-$ is its atom. We write
\begin{equation*}
(\Pi_y)_\# \gamma_2(\{ q \}) = \gamma_2(\overline{\Omega} \times \{ q \}) = \gamma_2(\Delta_{q} \times \{ q \}) = \overline{\gamma}((\Delta_{q} \times \{ q \}) \cap (A_2 \times A_2)) = \gamma(\emptyset) = 0,
\end{equation*}
where the first equality follows from the definition of the marginal, the second and third ones from the definition of $\gamma_2$, and the fourth one from the fact that $q \notin A_2$ for all $q \in D^-$. Hence, $(\Pi_y)_{\#} \gamma_2$ is absolutely continuous, so Theorem \ref{thm:dweiksantambrogio} implies that $\sigma_{\gamma_2} \in L^1(\Omega)$.

Similarly, we see that $(\Pi_x)_{\#} \gamma_3$ is absolutely continuous: again, it suffices to show that no point $p \in D^+$ is its atom, but then $p \notin A_3$ implies
\begin{equation*}
(\Pi_x)_\# \gamma_3(\{ p \}) = \gamma_3(\{ p \} \times \overline{\Omega}) = \gamma_3(\{ p \} \times \Delta_{p}) = \overline{\gamma}((\{ p \} \times \Delta_{p}) \cap (A_3 \times A_3)) = \overline{\gamma}(\emptyset) = 0,
\end{equation*}
hence $(\Pi_x)_{\#} \gamma_3$ is absolutely continuous, so Theorem \ref{thm:dweiksantambrogio} implies that $\sigma_{\gamma_3} \in L^1(\Omega)$. A minor variation of the above arguments shows that both $(\Pi_x)_{\#} \gamma_4$ and $(\Pi_y)_\# \gamma_4$ are absolutely continuous, so also $\sigma_{\gamma_4} \in L^1(\Omega)$.

Finally, we study the transport density $\sigma_{\gamma_1}$. First, notice that by property \eqref{eq:definitionofsigmaversiontwo} we have
\begin{equation*}
\sigma_{\gamma_1}(\overline{\Omega} \setminus A_1) = \int_{\overline{\Omega} \times \overline{\Omega}} \mathcal{H}^1([x,y] \setminus A_1) \, \mathrm{d} \gamma_1(x,y) = \int_{(A_1 \cap \partial\Omega) \times (A_1 \cap \partial\Omega)} \mathcal{H}^1([x,y] \setminus A_1) \, \mathrm{d} \gamma(x,y).
\end{equation*}
Recall that $A_1$ is an at most countable union of line segments connecting two points in $\partial\Omega$, one of which belongs to $D^+$ and the other one to $D^-$. Notice that $A_1 \cap \partial\Omega = D^+ \cup D^-$, in particular it is countable. Then, for any $x,y \in A_1 \cap \partial\Omega$, we have two possibilities: if $x, y \in D^+$ (or $x,y \in D^-$), then $\gamma(\{ x,y \}) = 0$. On the other hand, if $x \in D^+$ and $y \in D^-$ (or $x \in D^-$ and $y \in D^+$), then $[x,y] \subset A_1$, so $\mathcal{H}^1([x,y] \setminus A_1) = 0$. In either case, we have $\mathcal{H}^1([x,y] \setminus A_1) \gamma(\{ x,y \}) = 0$, so
\begin{equation*}
\sigma_{\gamma_1}(\overline{\Omega} \setminus A_1) = 0.
\end{equation*}
By equation \eqref{eq:definitionofsigmaversiontwo}, $\sigma_{\gamma_1}$ is absolutely continuous with respect to $\mathcal{H}^1$. Hence, actually $\sigma_{\gamma_1}$ a finite positive Radon measure which is absolutely continuous with respect to $\mathcal{H}^1|_{A_1}$.

{\bf Step 5.} Finally, since $|Du| = |p| = \sigma_{\overline{\gamma}}$, where $p$ is the solution to the Beckmann problem 
corresponding to $u$ and $\overline{\gamma}$ is the optimal transport plan corresponding to $p$, we have that
\begin{equation*}
|Du| = \sigma_{\overline{\gamma}} = \sigma_{\gamma_1} + (\sigma_{\gamma_2} + \sigma_{\gamma_3} + \sigma_{\gamma_4}),
\end{equation*}
hence $|Du|$ is a sum of an absolutely continuous measure $(\sigma_{\gamma_2} + \sigma_{\gamma_3} + \sigma_{\gamma_4})$ and a measure concentrated on a set of Hausdorff dimension one $\sigma_{\gamma_1}$, so $u \in SBV(\Omega)$.
\end{proof}

Actually, the proof of the Theorem works in a slightly general setting. In higher dimensions, with $\mathbb{R}^d$ equipped with the Euclidean norm, transport rays are still line segments, and the same proof yields the following result.

\begin{corollary}\label{cor:transportregularity}
Suppose that $\Omega \subset \mathbb{R}^d$ is uniformly convex. Suppose that $f \in \mathcal{M}(\partial\Omega)$ may be decomposed in the following way: $f = f_{ac} + f_{at}$, where $f_{ac} \in L^1(\partial\Omega)$ and $f_{at}$ is atomic. Then, when $\gamma$ is an optimal transport plan corresponding to the optimal transport problem between $f^+$ and $f^-$, we have
\begin{equation*}
\sigma_\gamma = \sigma_{ac} + \sigma_{at},
\end{equation*}
where $\sigma_{ac} \in L^1(\Omega)$ and $\sigma_{at}$ is concentrated on a set of Hausdorff dimension one.
\end{corollary}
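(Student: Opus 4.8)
The plan is to reproduce the proof of Theorem~\ref{thm:sbvregularity} almost verbatim, keeping only its transport-side content and discarding the final translation into a $BV$ function (which has no analogue in $\mathbb{R}^d$ for $d\geq 3$). Two facts make the argument dimension-free: for the Euclidean cost the transport rays are line segments in every $\mathbb{R}^d$, and Theorem~\ref{thm:dweiksantambrogio} is already stated in $\mathbb{R}^d$. Taking $p=1$ there, and using that the transport density is unchanged if one swaps the two marginals of the plan, we obtain $\sigma_\gamma\in L^1(\Omega)$ whenever \emph{at least one} marginal of an optimal plan $\gamma$ is absolutely continuous; this is the only quantitative input.

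I would begin by fixing a Kantorovich potential $\phi$ for the problem between $f^+$ and $f^-$, letting $D^\pm$ be the countable (disjoint) sets of atoms of $f^\pm$, and setting $\Delta_x=\{\,y:\ |\phi(x)-\phi(y)|=|x-y|\,\}$, which is closed exactly as before. I would then introduce the same Borel sets $A_1,\dots,A_4$ of $\overline{\Omega}$; here $A_1=\bigcup_{p\in D^+,\,q\in D^-}[p,q]$ is a countable union of segments, hence has Hausdorff dimension at most one. Since $\Omega$ is uniformly convex, it is strictly convex, so any transport ray with both endpoints on $\partial\Omega$ is the segment joining them; this is precisely what lets one check, as in the original proof, that $A_1\cap\partial\Omega=D^+\cup D^-$ and that $q\notin A_2$ for $q\in D^-$, $p\notin A_3$ for $p\in D^+$. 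Next I would split $\partial\Omega\times\partial\Omega$ using $B_n=(A_n\cap\partial\Omega)\times(A_n\cap\partial\Omega)$ for $n=1,2,3$ and $B_4$ the complement, but now \emph{disjointified}: set $\widetilde B_n=B_n\setminus\bigcup_{m<n}B_m$. This is the single place where the two-dimensional write-up needs adjusting, since there the overlaps were reduced to diagonal points, which is no longer automatic in $\mathbb{R}^d$. Writing $\gamma_n=\gamma|_{\widetilde B_n}$ (recall $\gamma$ is concentrated on $\partial\Omega\times\partial\Omega$ because its marginals live on $\partial\Omega$), one has $\gamma=\sum_{n}\gamma_n$ exactly, each $\gamma_n$ is optimal for its own marginals by the usual cut-and-paste argument against optimality of $\gamma$, and therefore $\sigma_\gamma=\sum_n\sigma_{\gamma_n}$ by linearity of the transport density in the plan.

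It then remains to analyse the four pieces. For $n=2,3,4$, each marginal of $\gamma_n$ is dominated by the corresponding $f^\pm$, hence is absolutely continuous plus atomic with atoms contained in $D^\pm$; the computations $(\Pi_y)_{\#}\gamma_2(\{q\})=\gamma_2(\Delta_q\times\{q\})=0$ for $q\in D^-$ (because $q\notin A_2$), and symmetrically for the $x$-marginal of $\gamma_3$ and for both marginals of $\gamma_4$, remove those atoms. Theorem~\ref{thm:dweiksantambrogio} with $p=1$ (applied, after exchanging the marginals if necessary, to the absolutely continuous one) then yields $\sigma_{\gamma_n}\in L^1(\Omega)$. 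For $n=1$, I would use $\sigma_{\gamma_1}(A)=\int\mathcal{H}^1([x,y]\cap A)\,\mathrm{d}\gamma_1(x,y)$: since $A_1\cap\partial\Omega=D^+\cup D^-$ is countable and, for $x,y\in A_1\cap\partial\Omega$, either $\gamma_1(\{x,y\})=0$ (both in $D^+$, or both in $D^-$) or $[x,y]\subseteq A_1$, one gets $\sigma_{\gamma_1}(\overline{\Omega}\setminus A_1)=0$, so $\sigma_{\gamma_1}$ is concentrated on the set $A_1$ of Hausdorff dimension one. Setting $\sigma_{ac}=\sigma_{\gamma_2}+\sigma_{\gamma_3}+\sigma_{\gamma_4}$ and $\sigma_{at}=\sigma_{\gamma_1}$ completes the proof.

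I do not expect a genuinely hard step here: the content is entirely that of Theorem~\ref{thm:sbvregularity}, and the only real modification is replacing the diagonal bookkeeping by the disjointification $\widetilde B_n=B_n\setminus\bigcup_{m<n}B_m$. The dimension enters only through Theorem~\ref{thm:dweiksantambrogio}, which is already available in $\mathbb{R}^d$, so once that substitution is in place every estimate carries over unchanged.
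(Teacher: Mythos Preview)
Your proposal is correct and follows essentially the same approach as the paper, which simply remarks that ``the same proof yields'' the corollary since transport rays are still line segments in $\mathbb{R}^d$ and Theorem~\ref{thm:dweiksantambrogio} is already stated in that generality. Your disjointification $\widetilde B_n=B_n\setminus\bigcup_{m<n}B_m$ is a clean way to bypass the paper's two-dimensional bookkeeping about the overlaps $B_m\cap B_n$ being diagonal and $\gamma$-null; it is harmless, and the verifications that the relevant marginals of $\gamma_2,\gamma_3,\gamma_4$ remain atomless go through exactly as you indicate (using $q\notin A_2$, $p\notin A_3$, and the fact that $\gamma$ only couples points lying on a common transport ray).
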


However, this is an a bit unusual setting to study the optimal transport problem, and in higher dimensions we do not have the correspondence between the least gradient  problem and the optimal transport problem, so we prefer to state Theorem \ref{thm:sbvregularity} in its current form.

In fact, a careful inspection of the proof of Theorem \ref{thm:sbvregularity} enables us to study in more detail the regularity and structure of solutions in the case when $g$ has only finitely many discontinuities. We will give some of the properties obtained in this way below.

\begin{corollary}
Suppose that $\Omega \subset \mathbb{R}^2$ is uniformly convex. Let $g \in BV(\partial\Omega)$ and suppose that the set $D$ of its discontinuity points is finite. Suppose further that $g \in W^{1,p}(\partial\Omega \setminus D)$ with $p \in (1,2)$. Set $J = \bigcup_{p,q \in D} [p,q]$; then, if $u \in BV(\Omega)$ is a solution to problem \eqref{eq:leastgradientproblem} with boundary data $g$, then $u \in W^{1,p}(\Omega \setminus J)$. \qed
\end{corollary}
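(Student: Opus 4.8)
The plan is to mimic the decomposition in the proof of Theorem \ref{thm:sbvregularity}, but keep track of the integrability of the transport densities rather than merely their Hausdorff dimension. Write $f = \partial_\tau g$; since $D$ is finite and $g \in W^{1,p}(\partial\Omega \setminus D)$, we have $f = f_{ac} + f_{at}$ with $f_{ac} \in L^p(\partial\Omega)$ and $f_{at}$ a finite sum of atoms supported on $D$. As before, decompose $f = f^+ - f^-$, let $\overline{\gamma}$ be an optimal transport plan associated with the given solution $u$, and split $\overline{\gamma} = \gamma_1 + \gamma_2 + \gamma_3 + \gamma_4$ according to whether the transport goes atom-to-atom, atom-to-a.c., a.c.-to-atom, or a.c.-to-a.c., exactly as in Steps 1--3 of the proof of Theorem \ref{thm:sbvregularity}. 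Each $\gamma_n$ is optimal for its own marginals, and $\sigma_{\overline{\gamma}} = \sum_{n=1}^4 \sigma_{\gamma_n}$.

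The key improvement over Theorem \ref{thm:sbvregularity} is in Step 4: for $n = 2,3,4$, the relevant marginal of $\gamma_n$ is not merely absolutely continuous but in fact bounded above by $f_{ac} \in L^p(\partial\Omega)$ (via \eqref{eq:estimateonmarginals1}--\eqref{eq:estimateonmarginals2}, noting that the atomic part of the marginal was entirely absorbed into $\gamma_1$ by construction). Hence Theorem \ref{thm:dweiksantambrogio} applies with exponent $p \in (1,2)$ and yields $\sigma_{\gamma_n} \in L^p(\Omega)$ for $n = 2,3,4$. For $\gamma_1$, since $D$ is finite, the set $J = \bigcup_{p,q \in D}[p,q]$ is a finite union of line segments, and the argument of Step 4 shows $\sigma_{\gamma_1}(\overline{\Omega} \setminus J) = 0$; in particular $\sigma_{\gamma_1}$ is supported in $J$, which is $\mathcal{H}^2$-null. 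Therefore on $\Omega \setminus J$ we have $|Du| = \sigma_{\overline{\gamma}} = \sigma_{\gamma_2} + \sigma_{\gamma_3} + \sigma_{\gamma_4}$, an $L^p(\Omega \setminus J)$ function. Combined with $u \in L^\infty(\Omega)$ from the maximum principle, this gives $\nabla u \in L^p(\Omega \setminus J)$ and hence $u \in W^{1,p}(\Omega \setminus J)$.

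One technical point to address carefully is that $\Omega \setminus J$ need not be connected, and that the restriction $|Du|\llcorner(\Omega \setminus J)$ controls $u$ only up to the jump of $u$ across $J$ itself; but since we only claim membership in $W^{1,p}(\Omega \setminus J)$ — i.e. that $u$ restricted to the open set $\Omega \setminus J$ has no singular part and an $L^p$ gradient — this is immediate once we know $|Du|\llcorner(\Omega\setminus J) \ll \mathcal{L}^2$ with $L^p$ density, because $Du$ has no jump or Cantor part inside the open set $\Omega \setminus J$ where $\sigma_{\gamma_1}$ vanishes. The main obstacle is verifying that the marginals of $\gamma_2, \gamma_3, \gamma_4$ are dominated by $f_{ac}$ and not just by $f$: one must check that none of the finitely many atoms of $f^+$ (resp. $f^-$) can contribute to $(\Pi_x)_\#\gamma_n$ (resp. $(\Pi_y)_\#\gamma_n$) for $n \neq 1$, which follows from the disjointness relations $A_1 \cap A_2 = D^+$, $A_1 \cap A_3 = D^-$ established in Step 1, together with the computations $(\Pi_y)_\#\gamma_2(\{q\}) = 0$ and $(\Pi_x)_\#\gamma_3(\{p\}) = 0$ carried out in Step 4; the same reasoning handles $\gamma_4$. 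Once this domination is in place, the rest is a direct application of Theorem \ref{thm:dweiksantambrogio} and the identity $|Du| = \sigma_{\overline\gamma}$.
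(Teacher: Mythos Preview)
Your proposal is correct and follows exactly the approach the paper intends: the Corollary is stated with a bare \qed\ because it follows from re-running the decomposition in the proof of Theorem \ref{thm:sbvregularity} and applying Theorem \ref{thm:dweiksantambrogio} with exponent $p$ to the absolutely continuous marginals of $\gamma_2,\gamma_3,\gamma_4$, which are dominated by $(f_{ac})^\pm \in L^p(\partial\Omega)$. Your handling of the technical points (domination of marginals, $L^\infty$ bound on $u$, behaviour on the open set $\Omega\setminus J$) is sound and matches the paper's surrounding discussion.
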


The requirement that $D$ is finite is required to be able to formulate the result in this setting, so that $J$ is a relatively closed set and $u$ is actually a function in a broken Sobolev space: by writing $u \in W^{1,p}(\Omega \setminus J)$, we mean that $\Omega \setminus J$ is an open set with finitely many connected components $(\Omega_i)_{i=1}^m$ and $u \in W^{1,p}(\Omega_i)$ for all $i = 1,...,m$. We use the same notation on $\partial\Omega$. Without this assumption, the statement of Corollary \ref{cor:transportregularity} with $\sigma_{ac} \in L^p(\Omega)$ is still valid.

The situation is different for $p > 2$. The results in \cite{DS} in this case require higher regularity of both $f^+$ and $f^-$; assuming regularity of one of them does not suffice. Indeed, if $u \in W^{1,p}(\Omega)$, then actually $u$ is H\"older continuous on $\overline{\Omega}$; but this in turn implies that $f$ is atomless, a contradiction. Hence, in the second part of this Section we will instead focus on local regularity of solutions to \eqref{eq:leastgradientproblem}, the main result being that any solution is Lipschitz in a neigbourhood of a generic point in $\Omega$. We start by proving the following Proposition. 

\begin{proposition}\label{prop:locallyfinite}
Suppose that $\Omega \subset \mathbb{R}^d$ is uniformly convex, where $d \geq 2$. Let $f^+ \in L^\infty(\partial\Omega)$ and take any $f^- \in \mathcal{M}(\partial\Omega)$. Then, if $\sigma$ is the (uniquely defined) transport density between $f^+$ and $f^-$, for all $\varepsilon > 0$ we have $\sigma \in L^\infty(\Omega \setminus (\mathrm{supp}(f^-))^\varepsilon)$.
\end{proposition}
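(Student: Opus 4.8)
The plan is to localize the known $L^p$-type bounds on the transport density away from the support of $f^-$, exploiting the fact that near any point at distance at least $\varepsilon$ from $\mathrm{supp}(f^-)$, every transport ray through that point has length comparable to $\varepsilon$, so the transport density is controlled by $f^+$ alone. First I would recall the identity \eqref{eq:definitionofsigmaversiontwo}, $\sigma(A) = \int \mathcal{H}^1([x,y]\cap A)\,\mathrm{d}\gamma(x,y)$, where $\gamma$ is the (unique, by Theorem \ref{thm:dweiksantambrogio}-type results in \cite{DS}) optimal plan, recalling also that $\sigma = \sigma_\gamma$ is independent of the choice of optimal $\gamma$. Fix $\varepsilon > 0$ and set $K_\varepsilon := \overline{\Omega} \setminus (\mathrm{supp}(f^-))^\varepsilon$. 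For a transport ray $[x,y]$ with $x \in \mathrm{supp}(f^+)$ and $y \in \mathrm{supp}(f^-)$, any point $w \in [x,y] \cap K_\varepsilon$ satisfies $|w - y| \geq \varepsilon$, and the portion of the ray from $x$ to $w$ stays inside $\overline{\Omega}$; the subsegment $[x,w]$ has length at least $\varepsilon$ (being the difference of $|x-y|$ and $|w-y| \le |x-y|$ is not the right bound — rather $[x,w]\subseteq[x,y]$ and $w \ne x$ in the relevant range, but more importantly the ray segment beyond $K_\varepsilon$ toward $y$ has length $\ge \varepsilon$).

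The key step is the following \emph{localized estimate}: there is a constant $C = C(\Omega, d, \varepsilon)$ such that $\sigma \lfloor K_\varepsilon \le C\,(T_\#(f^+))$ for a suitable projection-type map, or more robustly, $\|\sigma\|_{L^\infty(K_\varepsilon)} \le C \|f^+\|_{L^\infty(\partial\Omega)}$. To get this I would argue as in the standard proof of $L^p$ bounds on the transport density (e.g. \cite{DS}, or the McCann--Santambrogio approach): one disintegrates $\gamma$ along transport rays and, on each ray, the transport density restricted to the ray is, up to the geometry of the ray-map, a one-dimensional pushforward of the mass emitted from $f^+$. The Jacobian of the ray map degenerates only near $\mathrm{supp}(f^-)$ (the "tip" of the rays), and the uniform convexity of $\Omega$ — in the sense of \cite{DS}, $\Omega \subset B(x + R\mathbf{n}, R)$ — gives a quantitative lower bound on this Jacobian at points whose distance from the target is at least $\varepsilon$. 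Concretely, on $K_\varepsilon$ the Jacobian is bounded below by $c(\varepsilon, R) > 0$, so $\sigma$ on $K_\varepsilon$ is absolutely continuous with density bounded by $c^{-1}\|f^+\|_\infty$ times a bounded geometric factor.

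The main obstacle I anticipate is making the ray-map / Jacobian argument rigorous when $f^-$ is an arbitrary measure (possibly atomic), since then the "tips" of the transport rays may accumulate and the ray map is only well-defined $f^+$-a.e. on the source side. To handle this I would bypass the explicit change of variables: instead, I would use a comparison argument. Given $w \in K_\varepsilon$, let $B = B(w, \varepsilon/2)$; then $B$ lies at distance $\ge \varepsilon/2$ from $\mathrm{supp}(f^-)$, so no transport ray \emph{ends} in $B$, meaning every ray meeting $B$ crosses it fully (enters and exits, or enters from $\partial\Omega$). One then estimates $\sigma(B) = \int \mathcal{H}^1([x,y]\cap B)\,\mathrm{d}\gamma \le \mathrm{diam}(B)\cdot \gamma(\{(x,y): [x,y]\cap B \ne \emptyset\})$, and the latter $\gamma$-mass is bounded by the $f^+$-mass of the set of sources whose rays pass through $B$; by the uniform convexity of $\Omega$, the cone of directions from a fixed far-away region of $\partial\Omega$ subtended by $B$ has aperture $O(\varepsilon/|x-w|) \le O(1)$, and a covering/Besicovitch argument on $\partial\Omega$ bounds this by $C(\varepsilon)\,\|f^+\|_\infty\,\mathcal{H}^{d-1}$-measure of a boundary patch of size $O(\varepsilon)$, hence by $C(\varepsilon)\varepsilon^{d-1}\|f^+\|_\infty$. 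Since $\sigma(B) \lesssim \varepsilon^{d-1}\|f^+\|_\infty = |B|/\varepsilon \cdot c$, dividing by $|B|$ and letting the radius shrink gives $\|\sigma\|_{L^\infty(K_\varepsilon)} \le C(\Omega,d,\varepsilon)\,\|f^+\|_\infty < \infty$, which is the claim. The delicate point in this covering estimate is controlling the overlap of rays through $B$ coming from different parts of $\partial\Omega$ and ensuring the uniform-convexity constant $R$ enters only through an explicit, $\varepsilon$-dependent (but $f^-$-independent) factor; this is exactly where I would invoke the geometric lemmas already used to prove Theorem \ref{thm:dweiksantambrogio} in \cite{DS}.
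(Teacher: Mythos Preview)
Your proposal contains a genuine gap in the ``comparison/covering'' argument you use to bypass the ray-map Jacobian.  You fix the ball $B=B(w,\varepsilon/2)$ and estimate $\sigma(B)\lesssim \varepsilon^{d-1}\|f^+\|_\infty$, then write ``dividing by $|B|$ and letting the radius shrink''.  But the radius of $B$ is fixed; an estimate of $\sigma(B)/|B|$ for a single scale is an averaged bound, not an $L^\infty$ bound.  To get $\sigma\in L^\infty(K_\varepsilon)$ you must show $\sigma(B(w,r))\le C r^{d}$ for \emph{all} small $r$, i.e.\ that the source set $S_r=\{x\in\partial\Omega:\ [x,T(x)]\cap B(w,r)\ne\emptyset\}$ satisfies $\mathcal H^{d-1}(S_r)\le C r^{d-1}$.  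For general $f^-$ this is exactly the statement that the ray map has Jacobian bounded below on $K_\varepsilon$, which is the very ingredient you are trying to avoid; the vague ``cone-aperture plus Besicovitch'' sketch does not produce it, because the rays through $B(w,r)$ are dictated by $T$ and need not form a cone, and the non-crossing of rays alone does not give a quantitative area bound on $S_r$ without the angle information coming from uniform convexity.

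Your \emph{first} sketch (disintegrate along rays, bound the Jacobian from below away from the tips using uniform convexity) is actually the right idea and is what the paper does.  The obstacle you correctly identified---making the change of variables rigorous for arbitrary $f^-$---is handled in the paper not by a covering trick but by a two-step reduction: first take $f^-$ finitely atomic, where the ray structure is a finite union of cones over the atoms and the explicit pointwise formula for $\sigma$ from \cite[Proposition~3.1]{DS} gives, after the uniform-convexity lower bound $(x_i-x)\cdot\mathbf n(x)\ge C|x_i-x|^2$, the estimate $\|\sigma\|_{L^\infty(\Omega\setminus(D^-)^\varepsilon)}\le C(\varepsilon)\|f^+\|_\infty$ with a constant independent of the atoms; then approximate a general $f^-$ weakly by finitely atomic measures supported in $\mathrm{supp}(f^-)$, pass to the limit in the optimal plans (stability), and use lower semicontinuity of the $L^\infty$ norm.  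If you want to salvage your write-up, drop the covering argument and instead carry out this approximation step---it resolves precisely the difficulty you flagged.
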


Here, $A^\varepsilon$ denotes the $\varepsilon$-neighbourhood of a compact set $A$.

\begin{proof}
We will use a similar strategy as in the proof of \cite[Proposition 3.1]{DS}. First, suppose that $f^- \in \mathcal{M}(\partial\Omega)$ is finitely atomic. Denote by $D^- = \{ x_i: i = 1,...m \}$ the set of atoms of $f^-$, and denote by $T$ the (unique) optimal transport map from $f^+$ to $f^-$ (its existence follows from \cite[Proposition 2.6]{DS}). By Theorem \ref{thm:dweiksantambrogio}, the transport density $\sigma$ is absolutely continuous. For all $i = 1,...,m$, consider the set $T^{-1}(x_i)$, and without loss of generality suppose that each $T^{-1}(x_i)$ is represented by a single Lipschitz chart (if it were not, we can partition this set into finitely many parts with this property). Denote by $\Omega_i$ the set of all transport rays from $T^{-1}(x_i)$ to $x_i$ and notice that the sets $\Omega_i$ are disjoint (up to a set of zero Lebesgue measure). Now, for $\tau \in (0,1)$ denote
\begin{equation*}
\Omega_i^{(\tau)} = \{ (1-t) x + tx_i, \quad x \in T^{-1}(x_i), \quad t \leq \tau \}.
\end{equation*}
Note that $\Omega_i^{(\tau)} \subset \Omega_i$. Moreover, given $\varepsilon > 0$ there exists a constant $\tau_0 < 1$ depending on $\varepsilon$ such that $\Omega_i \setminus (D^-)^\varepsilon \subset \Omega_i^{(\tau_0)}$ for all $i = 1,...,m$ (for instance, we can take $\tau_0 = 1 - (2\mathrm{diam}(\Omega))^{-1} \varepsilon$). Fix $\tau \leq \tau_0$. Up to choosing a suitable coordinate system, $T^{-1}(x_i)$ is contained is a graph of a Lipschitz function $\alpha_i$. By definition, for every $y \in \Omega_i^{(\tau)}$ there exists a point $x = (s,\alpha_i(s))$ such that $y = (1-t)x + tx_i$ with $t \leq \tau$. Set $\sigma^{(\tau)} = \sigma|_{\Omega^{(\tau)}}$; then, exactly as in the proof of \cite[Proposition 3.1]{DS}, we get that
\begin{equation}
\sigma^{(\tau)}(y) = \frac{|x - x_i| f^+(x)}{(1-t)^{N-1} (x_i - x) \cdot \mathbf{n}(x)} \qquad \mbox{for all } y \in \Omega_i^{(\tau)},
\end{equation}
where $\mathbf{n}(x)$ is the inner normal unit vector at $x$. By uniform convexity of $\Omega$, for all $y \in \Omega_i^{(\tau)}$
\begin{equation}
\sigma^{(\tau)}(y) = \frac{|x - x_i| f^+(x)}{(1-t)^{N-1} (x_i - x) \cdot \mathbf{n}(x)} \leq \frac{|x - x_i| f^+(x)}{C (1-t)^{N-1} |x - x_i|^2} \leq \frac{f^+(x)}{C (1 - \tau)^{N-1} |x - x_i|}.
\end{equation}
Choose a representative of $f^+$ such that it is bounded by $\| f^+ \|_{L^\infty(\partial\Omega)}$ for all $x \in \partial\Omega$. Hence, for all $y \in \Omega_i^{(\tau)} \setminus (D^-)^\varepsilon$ we have
\begin{equation}
\sigma^{(\tau)}(y) \leq \frac{f^+(x)}{C \varepsilon (1 - \tau)^{N-1}} \leq \frac{f^+(x)}{C \varepsilon (1 - \tau_0)^{N-1}} \leq \frac{\| f^+ \|_{L^\infty(\partial\Omega)}}{C \varepsilon (1 - \tau_0)^{N-1}}.
\end{equation}
This bound does not depend on $i$ or $\tau$ (as long as $\tau \leq \tau_0$), hence it is valid for all points $y \in \Omega \setminus (D^-)^\varepsilon$ and we have
\begin{equation}\label{eq:boundonsigmatau}
\| \sigma \|_{L^\infty(\Omega \setminus (D^-)^\varepsilon)} \leq \frac{\| f^+ \|_{L^\infty(\partial\Omega)}}{C \varepsilon (1 - \tau_0)^{N-1}},
\end{equation}
where $C$ is a constant depending only on the curvature of $\partial\Omega$ and $\tau_0$ depends on $\varepsilon$.

Now, we prove the result for any target measure $f^- \in \mathcal{M}(\partial\Omega)$. Again, the optimal transport map from $f^+$ to $f^-$ exists and is unique by virtue of \cite[Proposition 2.6]{DS}, so in particular the transport density $\sigma$ is uniquely defined. Take a sequence of finitely atomic measures $f_n^-$ weakly converging to $f^-$ such that $\mathrm{supp}(f_n^-) \subset \mathrm{supp}(f^-)$. For $\tau \in (0,1)$, denote
\begin{equation*}
\Omega^{(\tau)} = \{ (1-t) x + tz, \quad z \in \mathrm{supp}(f^-), \quad x \in \partial\Omega \setminus (\mathrm{supp}(f^-))^\varepsilon, \quad t \leq \tau \}.
\end{equation*}
Given $\varepsilon > 0$, there exists a constant $\tau_0 = < 1$ depending on $\varepsilon$ such that $\Omega \setminus (\mathrm{supp}(f^-))^\varepsilon \subset \Omega^{(\tau_0)}$ (we may again take $\tau_0 = 1 - (2\mathrm{diam}(\Omega))^{-1} \varepsilon$). In particular, we can use this $\tau_0$ in the computation above for any $f_n$. Denote by $\sigma_n$ the sequence of transport densities corresponding to the (unique) optimal transport plans $\gamma_n$ between $f^+$ and $f_n^-$. By \cite[Proposition 2.4]{DS}, up to a subsequence $\gamma_n \rightharpoonup \gamma$, where $\gamma$ is an optimal transport plan between $f^+$ and $f^-$. Hence, by lower semicontinuity of the $L^\infty$ norm and equation \eqref{eq:boundonsigmatau}, we have
\begin{equation}
\| \sigma \|_{L^\infty(\Omega \setminus (\mathrm{supp}(f^-))^\varepsilon)} \leq \liminf_{n \rightarrow \infty} \| \sigma_n \|_{L^\infty(\Omega \setminus (\mathrm{supp}(f^-))^\varepsilon)} \leq \frac{\| f^+ \|_{L^\infty(\partial\Omega)}}{C \varepsilon (1 - \tau_0)^{N-1}},
\end{equation}
because the bound on $\sigma_n$ is uniform and is preserved in the limit.
\end{proof}

If the supports of $f^+$ and $f^-$ are disjoint, the bounds on $\sigma$ are actually up to the boundary. In general, this needs not be the case, as the counterexample in \cite[Section 4]{DS} shows.

\begin{corollary}\label{cor:uptotheboundary}
Suppose that $\Omega \subset \mathbb{R}^d$ is uniformly convex, where $d \geq 2$. Let $f^\pm \in L^\infty(\partial\Omega)$. Suppose that $\mathrm{supp}(f^+) \cap \mathrm{supp}(f^-) = \emptyset$. Then, if $\sigma$ is the (uniquely defined) transport density between $f^+$ and $f^-$, for all $\varepsilon > 0$ we have $\sigma \in L^\infty(\Omega)$.
\end{corollary}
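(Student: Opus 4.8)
The plan is to deduce the statement directly from Proposition \ref{prop:locallyfinite}, by combining it with its mirror image (obtained by swapping the source and target measures) and using that disjoint compact subsets of $\partial\Omega$ are separated by a positive distance.

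First I would record the symmetry of the transport density under exchanging $f^+$ and $f^-$. If $\gamma$ is an optimal transport plan between $f^+$ and $f^-$ for the cost $|x-y|$, then its push-forward $\gamma^{T}$ under the map $(x,y)\mapsto(y,x)$ is an optimal transport plan between $f^-$ and $f^+$, and the change of variables $t\mapsto 1-t$ in \eqref{eq:definitionofsigma} together with $\omega_{y,x}(1-t)=\omega_{x,y}(t)$ and $|\omega'_{y,x}(t)|=|\omega'_{x,y}(t)|$ shows that $\sigma_{\gamma^{T}}=\sigma_{\gamma}$. Since $f^{\pm}\in L^\infty(\partial\Omega)$, the transport density is uniquely defined as in Proposition \ref{prop:locallyfinite}, so there is a single measure $\sigma$ which is at once ``the transport density between $f^+$ and $f^-$'' and ``the transport density between $f^-$ and $f^+$''.

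Next, set $\delta:=\mathrm{dist}(\mathrm{supp}(f^+),\mathrm{supp}(f^-))$; since the two supports are disjoint compact subsets of $\partial\Omega$, we have $\delta>0$. Fix any $\varepsilon\in(0,\delta/2)$, for instance $\varepsilon=\delta/3$. Applying Proposition \ref{prop:locallyfinite} with target measure $f^-$ gives $\sigma\in L^\infty(\Omega\setminus(\mathrm{supp}(f^-))^\varepsilon)$, and applying it with the roles of $f^+$ and $f^-$ interchanged (legitimate because $f^+\in L^\infty(\partial\Omega)$ and $f^-\in\mathcal{M}(\partial\Omega)$) gives $\sigma\in L^\infty(\Omega\setminus(\mathrm{supp}(f^+))^\varepsilon)$, both with the quantitative bound \eqref{eq:boundonsigmatau}.

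Finally, because $2\varepsilon<\delta$, the neighbourhoods $(\mathrm{supp}(f^+))^\varepsilon$ and $(\mathrm{supp}(f^-))^\varepsilon$ are disjoint, hence
\begin{equation*}
\Omega=\big(\Omega\setminus(\mathrm{supp}(f^-))^\varepsilon\big)\cup\big(\Omega\setminus(\mathrm{supp}(f^+))^\varepsilon\big),
\end{equation*}
and $\sigma$ is bounded on each of these two sets; therefore $\sigma\in L^\infty(\Omega)$, with $\|\sigma\|_{L^\infty(\Omega)}$ controlled by the maximum of the two constants furnished by Proposition \ref{prop:locallyfinite}. I do not expect any genuine obstacle here beyond bookkeeping: the only step that requires a line of justification is the symmetry $\sigma_{\gamma^{T}}=\sigma_{\gamma}$, which is what permits invoking Proposition \ref{prop:locallyfinite} a second time with the marginals swapped.
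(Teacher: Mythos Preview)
Your proof is correct and follows essentially the same route as the paper: apply Proposition~\ref{prop:locallyfinite} twice, once with the pair $(f^+,f^-)$ and once with $(f^-,f^+)$, and use disjointness of the supports to cover $\Omega$ by the two sets on which $\sigma$ is bounded. Your version is slightly more explicit in justifying the symmetry $\sigma_{\gamma^T}=\sigma_\gamma$ and in choosing $\varepsilon$; one minor slip is that in the parenthetical ``legitimate because $f^+\in L^\infty(\partial\Omega)$ and $f^-\in\mathcal{M}(\partial\Omega)$'' you have the roles reversed (after swapping, it is $f^-$ that must be in $L^\infty$), but since both $f^\pm\in L^\infty(\partial\Omega)$ this is harmless.
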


\begin{proof}
We use Proposition \ref{prop:locallyfinite} twice: first, for the pair $f^+$ and $f^-$, and we get that for all $\varepsilon > 0$ we have $\sigma \in L^\infty(\Omega \setminus (\mathrm{supp}(f^-))^\varepsilon)$. Then, we use it for the pair $f^-$ and $f^+$, and we get that for all $\varepsilon > 0$ we have $\sigma \in L^\infty(\Omega \setminus (\mathrm{supp}(f^+))^\varepsilon)$. Since the supports of $f^+$ and $f^-$ are disjoint, for sufficiently small $\varepsilon > 0$ we have that
\begin{equation*}
\Omega = (\Omega \setminus (\mathrm{supp}(f^+))^\varepsilon)) \cup (\Omega \setminus (\mathrm{supp}(f^-))^\varepsilon)), 
\end{equation*}
so actually $\sigma \in L^\infty(\Omega)$.
\end{proof}

In the setting of the least gradient problem, the results above easily translate to results on local boundedness of the gradient of the solution; below, we state an analogue of Proposition \ref{prop:locallyfinite}.

\begin{corollary}
Suppose that $\Omega \subset \mathbb{R}^2$ is uniformly convex. Suppose that $g \in \mathrm{Lip}(\partial\Omega)$. Then, if $u \in BV(\Omega)$ is the (unique) solution to problem \ref{eq:leastgradientproblem} with boundary data $g$, then $u \in \mathrm{Lip}_{loc}(\Omega)$.
\end{corollary}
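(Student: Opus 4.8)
The plan is to deduce this corollary from Proposition \ref{prop:locallyfinite} together with the equivalence between the least gradient problem and the optimal transport problem recalled in Section \ref{sec:preliminaries}. Since $g \in \mathrm{Lip}(\partial\Omega)$, in particular $g \in W^{1,\infty}(\partial\Omega) \subset W^{1,1}(\partial\Omega)$, so $f = \partial_\tau g$ is an absolutely continuous measure with density in $L^\infty(\partial\Omega)$; hence $f^+ = (\partial_\tau g)_+$ and $f^- = (\partial_\tau g)_-$ both lie in $L^\infty(\partial\Omega)$. In particular, $f^+$ is atomless, so by \cite{DS} the optimal transport plan $\gamma$ between $f^+$ and $f^-$ is unique and induced by a map, the transport density $\sigma = \sigma_\gamma$ is uniquely defined, and since $\Omega$ is (uniformly, hence strictly) convex we have $\sigma_\gamma(\partial\Omega) = 0$; therefore by Theorem \ref{thm:beckmannkantorovichequivalence} and Theorem \ref{thm:lgpbeckmannequivalence} the solution $u \in BV(\Omega)$ to \eqref{eq:leastgradientproblem} is unique and satisfies $|Du| = |p_\gamma| = \sigma_\gamma$ as measures on $\overline{\Omega}$.

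Next I would localise. Fix a point $x_0 \in \Omega$ and pick $\varepsilon > 0$ small enough that $B(x_0, \varepsilon) \subset \Omega$; since $\mathrm{supp}(f^-) \subset \partial\Omega$, we certainly have $B(x_0,\varepsilon) \subset \Omega \setminus (\mathrm{supp}(f^-))^{\varepsilon}$ after possibly shrinking $\varepsilon$. Applying Proposition \ref{prop:locallyfinite} with this $f^+ \in L^\infty(\partial\Omega)$ and this $f^-$ gives $\sigma \in L^\infty\big(\Omega \setminus (\mathrm{supp}(f^-))^{\varepsilon}\big)$, hence $\sigma \in L^\infty(B(x_0,\varepsilon))$. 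Since $|Du| = \sigma$, this means $|Du|$ is an $L^\infty$ density times Lebesgue measure on $B(x_0,\varepsilon)$; in particular $Du$ has no singular part there, so $u \in W^{1,\infty}(B(x_0,\varepsilon))$ with $\|\nabla u\|_{L^\infty(B(x_0,\varepsilon))} = \|\sigma\|_{L^\infty(B(x_0,\varepsilon))}$. A function in $W^{1,\infty}$ of a ball is Lipschitz on that ball (with Lipschitz constant controlled by the $L^\infty$ norm of the gradient), so $u$ is Lipschitz on a neighbourhood of $x_0$. As $x_0 \in \Omega$ was arbitrary, $u \in \mathrm{Lip}_{loc}(\Omega)$.

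The only genuine subtlety is the passage from ``$|Du|$ is absolutely continuous with bounded density on a ball'' to ``$u$ is Lipschitz there''. This is standard but should be stated: once one knows $Du \llcorner B(x_0,\varepsilon)$ is an $L^\infty$ vector field (which follows because $|Du| \leq \|\sigma\|_\infty \mathcal{L}^2$ on the ball forces the Cantor and jump parts of $Du$ to vanish there, so $Du = \nabla u \, \mathcal{L}^2$ with $|\nabla u| \leq \|\sigma\|_\infty$ a.e.), the identification $u \in W^{1,\infty}_{loc} \cap C^{0,1}_{loc}$ on the convex ball is immediate. I would also remark — as in the paragraph following Corollary \ref{cor:uptotheboundary} — that since $\sigma = |Du| = |p_\gamma|$, this is exactly the translation of Proposition \ref{prop:locallyfinite} into the least gradient language, so no new work beyond the bookkeeping above is required; and that the analogue of Corollary \ref{cor:uptotheboundary} would give Lipschitz regularity up to $\overline{\Omega}$ when $\mathrm{supp}(f^+)$ and $\mathrm{supp}(f^-)$, i.e. the jump-type behaviour of $\partial_\tau g$ in the two directions, have disjoint supports on $\partial\Omega$.
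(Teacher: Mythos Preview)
Your proof is correct and follows essentially the same approach as the paper: both argue that $g \in \mathrm{Lip}(\partial\Omega)$ gives $f^\pm \in L^\infty(\partial\Omega)$, invoke uniqueness of the optimal plan and transport density from \cite{DS}, apply Proposition \ref{prop:locallyfinite} to get $\sigma \in L^\infty(\Omega \setminus (\mathrm{supp}(f^-))^\varepsilon)$, and then use $|Du| = \sigma$ via Theorems \ref{thm:lgpbeckmannequivalence} and \ref{thm:beckmannkantorovichequivalence} to conclude $\nabla u \in L^\infty_{loc}(\Omega)$. Your version is simply more explicit about the localisation to balls and the passage from bounded $|Du|$ to $W^{1,\infty}$, which the paper leaves implicit.
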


\begin{proof}
Since $g \in \mathrm{Lip}(\partial\Omega)$, its tangential derivative $f = \partial_\tau g$ is such that $f^\pm \in L^\infty(\partial\Omega)$. Since $f$ is atomless, by \cite[Proposition 2.5]{DS} the optimal transport plan is unique and induced by a map, so the transport density is unique; denote it by $\sigma$. We apply Proposition \ref{prop:locallyfinite} to get that $\sigma \in L^\infty(\Omega \setminus (\mathrm{supp}(f^-))^\varepsilon$ for all $\varepsilon > 0$; because $f^-$ is supported on $\partial\Omega$, this means that $\sigma$ is locally bounded in $\Omega$. Using Theorems \ref{thm:lgpbeckmannequivalence} and \ref{thm:beckmannkantorovichequivalence}, in particular the correspondence $|Du| = |p| = \sigma$, we get that $\nabla u \in L^\infty_{loc}(\Omega)$.
\end{proof}

Now, we proceed to give the main result for regularity of solutions to the least gradient problem in the case when the boundary datum has only a finite number of discontinuities. It is optimal in the sense that in general we cannot expect Sobolev regularity of the solution on any larger set.

\begin{proposition}\label{prop:locallyfinitejumpset}
Suppose that $\Omega \subset \mathbb{R}^2$ is uniformly convex. Let $g \in BV(\partial\Omega)$ and suppose that the set $D$ of its discontinuity points is finite. Suppose further that $g \in \mathrm{Lip}(\partial\Omega \setminus D)$. Set $J = \bigcup_{p,q \in D} [p,q]$; then, if $u \in BV(\Omega)$ is a solution to problem \eqref{eq:leastgradientproblem} with boundary data $g$, then $u \in \mathrm{Lip}_{loc}(\Omega \setminus J)$.
\end{proposition}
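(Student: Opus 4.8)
The plan is to reduce the statement to a combination of Proposition \ref{prop:locallyfinite} and the decomposition of the Monge-Kantorovich problem used in the proof of Theorem \ref{thm:sbvregularity}. Write $f = \partial_\tau g$; since $g \in \mathrm{Lip}(\partial\Omega \setminus D)$ with $D$ finite, we have $f = f_{ac} + f_{at}$, where $f_{ac} \in L^\infty(\partial\Omega)$ and $f_{at}$ is a finite atomic measure concentrated on $D$. Correspondingly $f^\pm = f^\pm_{ac} + f^\pm_{at}$, the atomic parts being supported on finite sets $D^+, D^- \subset D$. Fix a solution $u \in BV(\Omega)$ and an associated optimal transport plan $\overline\gamma$ between $f^+$ and $f^-$. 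As in Step 1--Step 3 of the proof of Theorem \ref{thm:sbvregularity}, decompose $\overline\gamma = \gamma_1 + \gamma_2 + \gamma_3 + \gamma_4$, where $\gamma_1$ handles transport between $D^+$ and $D^-$ (so $\sigma_{\gamma_1}$ is concentrated on $A_1 = \bigcup_{p \in D^+, q \in D^-}[p,q] \subset J$), $\gamma_2$ handles transport from $D^+$ to the atomless part of $f^-$, $\gamma_3$ from the atomless part of $f^+$ to $D^-$, and $\gamma_4$ between the two atomless parts; each $\gamma_n$ may be replaced by an optimal plan for its own marginals without loss of generality. Then $|Du| = \sigma_{\overline\gamma} = \sigma_{\gamma_1} + \sigma_{\gamma_2} + \sigma_{\gamma_3} + \sigma_{\gamma_4}$ on $\overline\Omega$, and on $\Omega \setminus J$ the term $\sigma_{\gamma_1}$ vanishes.

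Next I would apply Proposition \ref{prop:locallyfinite} to $\gamma_2$, $\gamma_3$, $\gamma_4$ to show that on $\Omega \setminus J$ each $\sigma_{\gamma_n}$ is locally bounded. For $\gamma_4$: both its marginals are in $L^\infty(\partial\Omega)$ (being dominated by $f^\pm_{ac}$), its source and target measures are supported on $\partial\Omega$, so Proposition \ref{prop:locallyfinite} applied with the target marginal gives $\sigma_{\gamma_4} \in L^\infty(\Omega \setminus (\partial\Omega)^\varepsilon)$ for all $\varepsilon$, hence $\sigma_{\gamma_4} \in L^\infty_{loc}(\Omega)$. For $\gamma_3$: its source marginal $(\Pi_x)_\#\gamma_3$ is in $L^\infty(\partial\Omega)$ while its target $(\Pi_y)_\#\gamma_3$ is a (finite atomic) measure supported in $D^- \subset D$. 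Proposition \ref{prop:locallyfinite} (with the roles of $f^+$ and $f^-$ as stated there matched to our source and target) yields $\sigma_{\gamma_3} \in L^\infty(\Omega \setminus (D^-)^\varepsilon)$ for every $\varepsilon > 0$, hence $\sigma_{\gamma_3} \in L^\infty_{loc}(\Omega \setminus D^-) \subset L^\infty_{loc}(\Omega \setminus J)$ since $D^- \subset J$. For $\gamma_2$: here the target $(\Pi_y)_\#\gamma_2$ is atomless and dominated by $f^-_{ac} \in L^\infty$, and the source $(\Pi_x)_\#\gamma_2$ is atomic supported in $D^+$; so I would reverse the two measures and apply Proposition \ref{prop:locallyfinite} in the direction "source atomless, target atomic": $\sigma_{\gamma_2}$ is the same under swapping $f^+ \leftrightarrow f^-$, and we get $\sigma_{\gamma_2} \in L^\infty(\Omega \setminus (D^+)^\varepsilon)$ for all $\varepsilon$, hence $\sigma_{\gamma_2} \in L^\infty_{loc}(\Omega \setminus J)$. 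Adding up, $|Du| = \sigma_{\gamma_2} + \sigma_{\gamma_3} + \sigma_{\gamma_4} \in L^\infty_{loc}(\Omega \setminus J)$, so $\nabla u \in L^\infty_{loc}(\Omega \setminus J)$, and since $u$ is bounded (maximum principle) it is locally Lipschitz on the open set $\Omega \setminus J$; finiteness of $D$ is what makes $J$ a finite union of closed segments, hence relatively closed, so $\Omega \setminus J$ is open with finitely many components and the statement $u \in \mathrm{Lip}_{loc}(\Omega \setminus J)$ is meaningful.

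The main obstacle is verifying that Proposition \ref{prop:locallyfinite} genuinely applies to the auxiliary plans $\gamma_2, \gamma_3$ despite their marginals being only \emph{parts} of $f^\pm$: one needs that $(\Pi_x)_\#\gamma_3 \in L^\infty(\partial\Omega)$, which follows because $(\Pi_x)_\#\gamma_3 \le f^+$ as measures and the Radon-Nikodym density of a dominated absolutely continuous measure is bounded by that of $f^+_{ac}$; and one needs that the support of the atomic marginal is contained in $D^\pm$, which is immediate from the construction of the sets $B_n$. A secondary subtlety is that Proposition \ref{prop:locallyfinite} is stated with the hypothesis $f^+ \in L^\infty$ and arbitrary $f^- \in \mathcal M(\partial\Omega)$, so for $\gamma_2$ I should be careful to place the $L^\infty$ marginal in the "$f^+$" slot — i.e. apply it to the pair $(\text{target of }\gamma_2, \text{source of }\gamma_2)$ — using that the transport density is symmetric under exchanging source and target. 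Finally, one should note, as the proof of Theorem \ref{thm:sbvregularity} does, that the transport density of each $\gamma_n$ is well-defined and that $\sigma_{\overline\gamma} = \sum_n \sigma_{\gamma_n}$ by linearity of the defining formula \eqref{eq:definitionofsigma} together with $\overline\gamma = \sum_n \gamma_n$; this is routine but should be invoked explicitly.
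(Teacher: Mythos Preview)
Your proposal is correct and follows the paper's own strategy closely: both proofs invoke the decomposition $\overline{\gamma}=\gamma_1+\gamma_2+\gamma_3+\gamma_4$ from Theorem~\ref{thm:sbvregularity}, observe that $\sigma_{\gamma_1}$ is concentrated on $J$, and then apply Proposition~\ref{prop:locallyfinite} to the remaining pieces using the $L^\infty$ bounds on the absolutely continuous marginals.

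The one genuine difference is your treatment of $\gamma_4$. The paper further splits $\gamma_4$ into an inter-arc part $\gamma_{4,0}$ (handled via Corollary~\ref{cor:uptotheboundary}, using that distinct arcs are separated by transport rays through $D$) and intra-arc parts $\gamma_{4,i}$ (each handled by Proposition~\ref{prop:locallyfinite} away from $(\chi_i)^\varepsilon$). You instead apply Proposition~\ref{prop:locallyfinite} directly to $\gamma_4$, using only that $(\Pi_x)_\#\gamma_4\in L^\infty(\partial\Omega)$ and $\mathrm{supp}((\Pi_y)_\#\gamma_4)\subset\partial\Omega$, to get $\sigma_{\gamma_4}\in L^\infty(\Omega\setminus(\partial\Omega)^\varepsilon)$. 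This is simpler and yields exactly the same final estimate $\sigma_{\overline\gamma}-\sigma_{\gamma_1}\in L^\infty(\Omega\setminus(\partial\Omega)^\varepsilon)$; the paper's finer decomposition buys nothing for the statement of the Proposition itself, though it does feed into the subsequent discussion of boundary regularity.

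Two small remarks. First, you write that each $\gamma_n$ ``may be replaced by an optimal plan for its own marginals''; in fact Step~3 of Theorem~\ref{thm:sbvregularity} shows each $\gamma_n$ already \emph{is} optimal for its marginals, so no replacement is needed and Proposition~\ref{prop:locallyfinite} (which refers to the uniquely defined transport density) applies directly. Second, your assertion that $(\Pi_x)_\#\gamma_2$ is ``atomic supported in $D^+$'' is stronger than what is established in Theorem~\ref{thm:sbvregularity} (and the paper's proof makes the same leap). It is harmless here: for the local Lipschitz conclusion you only need $\sigma_{\gamma_2}\in L^\infty_{\mathrm{loc}}(\Omega)$, which already follows from $\mathrm{supp}((\Pi_x)_\#\gamma_2)\subset\partial\Omega$.
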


Again, by writing $u \in \mathrm{Lip}_{loc}(\Omega \setminus J)$, we mean that $\Omega \setminus J$ is an open set with finitely many connected components $(\Omega_i)_{i=1}^m$ and $u \in\mathrm{Lip}_{loc}(\Omega_i)$ for all $i = 1,...,m$. We use the same notation on $\partial\Omega$. In particular, the result $u \in \mathrm{Lip}_{loc}(\Omega \setminus J)$ means that the solution to the least gradient problem is Lipschitz in a neigbourhood of a generic point.

\begin{proof}
The assumption $g \in \mathrm{Lip}(\partial\Omega \setminus D)$ corresponds to the fact that $f = \partial_\tau g \in L^\infty(\partial\Omega \setminus D)$; in particular, $f$ is a sum of a finite sum of Dirac deltas at points in $D$ and an $L^\infty$ function. We proceed as in the proof of Theorem \ref{thm:sbvregularity} and keep the same notation. By Step 4, we know that the transport densities $\sigma_{\gamma_n}$ for $n = 2,3,4$ are absolutely continuous; we will improve on this result. 

Note that since $D$ is finite, both sets $D^+$ and $D^-$ are finite. By Step 4 of the proof of Theorem \ref{thm:sbvregularity}, $(\Pi_y)_{\#} \gamma_2$ is absolutely continuous and $(\Pi_y)_{\#} \gamma_2 \leq f^-$. Hence, $(\Pi_y)_{\#} \gamma_2 \in L^\infty(\partial\Omega)$. By Proposition \ref{prop:locallyfinite} we get that $\sigma_{\gamma_2} \in L^\infty(\Omega \backslash (D^+)^\varepsilon)$. Similarly, $(\Pi_y)_{\#} \gamma_3$ is absolutely continuous and $(\Pi_x)_{\#} \gamma_3 \leq f^+$; hence, $(\Pi_x)_{\#} \gamma_3 \in L^\infty(\partial\Omega)$, and by Proposition \ref{prop:locallyfinite} we get that $\sigma_{\gamma_3} \in L^\infty(\Omega \backslash (D^-)^\varepsilon)$.

Now, we estimate $\sigma_{\gamma_4}$. We will separate this transport density into a few parts. For $i = 1,...,m$, denote by $x_i$ the points in $D$, and by $\chi_i$ the open arcs between the points in $x_i$ and $x_{i+1}$ (with the convention that $x_{m+1} = x_1$ and $\chi_{m+1} = \chi_1$). We set
\begin{equation*}
\gamma_4 = \sum_{i=1}^m \gamma_{4,i} + \gamma_{4,0},
\end{equation*}
where $\gamma_{4,i}$ is the part of the transport taking place from $\chi_i$ to $\chi_i$, namely
\begin{equation}
\gamma_{4,i} := \gamma_4|_{\chi_i \times \chi_i} = \gamma|_{\chi_i \times \chi_i}
\end{equation}
and $\gamma_{4,0}$ is the part of $\gamma_4$ which corresponds to transport between some $\chi_i$ and $\chi_j$ for $i \neq j$. 

Since transport rays cannot intersect at an interior point, and every point in $D$ lies in at least one transport ray, there exists a neighbourhood of every point $x_i$ in $D$ in which there is no transport from $\chi_{i-1}$ to $\chi_i$; since the number of points in $D$ is finite, there exists $\delta > 0$ such that for $x \in \chi_i$ and $y \in \chi_j$ with $(x,y) \in \mathrm{supp}(\gamma)$ we have that $|x - y| \geq \delta$. Hence, by Corollary \ref{cor:uptotheboundary} $\sigma_{\gamma_{4,0}} \in L^\infty(\Omega)$.

Finally, we need to estimate $\sigma_{\gamma_{4,i}}$ away from the boundary of $\Omega$. Fix $\varepsilon > 0$. By Step 4 of the proof of Theorem \ref{thm:sbvregularity}, $(\Pi_x)_{\#} \gamma_4$ is absolutely continuous and $(\Pi_x)_{\#} \gamma_4 \leq f^+$, so $(\Pi_x)_{\#} \gamma_4 \in L^\infty(\partial\Omega)$; similarly, we get that $(\Pi_x)_{\#} \gamma_{4,i} \in L^\infty(\partial\Omega)$. By Proposition \ref{prop:locallyfinite} we get that $\sigma_{\gamma_4,i} \in L^\infty(\Omega \backslash (\chi_i)^\varepsilon)$.

Collecting the estimates on the transport density of $\gamma_2, \gamma_3, \gamma_{4,0}$ and $\gamma_{4,i}$, we get that 
\begin{equation}
\sigma_{\overline{\gamma}} - \sigma_{\gamma_1} = \sigma_{\gamma_2} + \sigma_{\gamma_3} + \sigma_{\gamma_{4,0}} + \sum_{i=1}^m \sigma_{\gamma_{4,i}} \in L^\infty(\Omega \setminus (\partial\Omega)^\varepsilon).
\end{equation}
Since $\sigma_{\gamma_1}$ is concentrated on $J$, we get that $\sigma_{\overline{\gamma}} \in L^\infty(\Omega \setminus (J \cup (\partial\Omega)^\varepsilon))$. Using Theorems \ref{thm:lgpbeckmannequivalence} and \ref{thm:beckmannkantorovichequivalence}, in particular the correspondence $|Du| = |p_{\overline{\gamma}}| = \sigma_{\overline{\gamma}}$, we get that $\nabla u \in L^\infty_{loc}(\Omega \backslash J)$.
\end{proof}

Obviously, in some special cases a variation of the proof above may be used to prove regularity of the solution up to the boundary (except for the discontinuity set $D$). This can be done when some of the expressions have a simple form: for instance, in the Brothers example (Example \ref{ex:brothers}), for any solution $u$ to problem \eqref{eq:leastgradientproblem} we have that $\gamma_2$ and $\gamma_3$ disappear and marginals of $\gamma_{4,i}$ are smooth, so we may use \cite[Proposition 3.5]{DS} to get that $\sigma_{\gamma_{4,i}} \in L^\infty(\Omega)$; hence, $u \in \mathrm{Lip}(\Omega \setminus J)$. On the other hand, we cannot expect better regularity of $u$ than the one given in Proposition \ref{prop:locallyfinitejumpset}. Lipschitz continuity of $u$ may break down in several ways: when $\gamma_2$ (or $\gamma_3$) does not disappear, then $u$ cannot be Lipschitz in the neighbourhood of the discontinuity point in its support; for instance, consider boundary data $g$ which are increasing on $\partial\Omega \setminus \{ p \}$ with a drop in value at $p$. Then, it is clear that the solution is not Lipschitz around $p$. Also, the counterexample in \cite[Section 4]{DS} shows that $\sigma_{\gamma_{4,i}}$ may fail to be bounded near $\partial\Omega$.

Finally, let us comment on the anisotropic case. All the results in this Section are also valid for any strictly convex norm $\varphi$. We used strict convexity of $\varphi$ on several occasions: apart from the equivalence described in Section \ref{sec:preliminaries}, strict convexity of $\varphi$ is required in order for Theorem \ref{thm:dweiksantambrogio} to be valid and the fact that transport rays are line segments is used in Step 4 of the proof of Theorem \ref{thm:sbvregularity} to prove that the transport density $\sigma_{\gamma_1}$ is concentrated on a set of Hausdorff dimension one.

\section{Stability}\label{sec:stability}

Our aim in this Section is to prove a general stability result for solutions of the least gradient problem using optimal transport techniques. This issue has been first studies by Miranda in \cite{Mir} using the concept of least gradient functions, i.e. functions which are solutions to the least gradient problem for the boundary data equal to their trace (actually, the author uses a slightly different definition which is also valid on unbounded domains). Miranda proved that an $L^1$ limit of least gradient functions is itself a least gradient function. Since then, Miranda's theorem was often used to prove existence of solutions to \eqref{eq:leastgradientproblem} in the following way (see for instance \cite{GRS2017NA,Gor2018CVPDE,Gor2019IUMJ,RS}): approximate the boundary data $g \in L^1(\partial\Omega)$ by a well-chosen sequence of functions $g_n \in L^1(\partial\Omega)$ and take the solutions $u_n \in BV(\Omega)$ to \eqref{eq:leastgradientproblem} for the approximate boundary data $g_n$. Then, prove that we can pass to the limit $u_n \rightarrow u$ in $L^1(\Omega)$, so that $u \in BV(\Omega)$ is a least gradient function; then, prove that for this special choice of the approximating sequence the trace of the limit equals $g$. This implies that $u$ is a solution to the least gradient problem with boundary data $g$. A  similar technique, involving also an approximation of the domain, was used in \cite{GRS2017NA,RS} to prove existence of solutions on convex polygons under some admissibility conditions on the boundary data.

However, since the trace operator is not continuous with respect to $L^1$ convergence, this reasoning depends on choosing an approximating sequence which is best suited to the problem at hand. In general, Miranda's theorem does not imply that solutions to \eqref{eq:leastgradientproblem} for boundary data $g_n$ converges to a solution to \eqref{eq:leastgradientproblem} for boundary data $g$ whenever $g_n \rightarrow g$ in $L^1(\partial\Omega)$; indeed, it was shown in \cite{ST} that there exist boundary data in $L^\infty(\partial\Omega)$ for which there is no solution. Therefore, to the best of the author's knowledge, the results in this Section are the first stability results for least gradient problem which do not require a special form of the approximating sequence. Note that since we use optimal transport techniques, the boundary data necessarily lie in $BV(\partial\Omega)$, so the counterexample from \cite{ST} does not apply; on the other hand, this means that our results are close to optimal.

The first result in this Section is an estimate on the total variation of a solution to the least gradient problem in terms of the total variation of its boundary datum. To the best of the author's knowledge, this type of estimate appeared in the literature only once, in \cite[Lemma 2.13]{RS2}. However, the authors of \cite{RS} prove it under very restrictive conditions on boundary data. Here, we give a much simpler proof of this result using optimal transport methods; moreover, we do not require any structural assumptions on $\Omega$ (apart from the usual assumption of convexity) and $g$, and the constant we obtain is sharp. 

\begin{proposition}\label{prop:totalvariationestimate}
Suppose that $u \in BV(\Omega)$ is a solution to the least gradient problem \eqref{eq:leastgradientproblem} for boundary data $g \in BV(\partial\Omega)$. Then,
\begin{equation}\label{eq:estimateontotalvariation}
|Du|(\Omega) \leq \frac{\mathrm{diam}(\Omega)}{2} |Dg|(\partial\Omega).
\end{equation}
\end{proposition}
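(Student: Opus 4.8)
The plan is to read the estimate directly off the chain of equivalences recalled in Section~\ref{sec:preliminaries}. First I would set $f = \partial_\tau g$ and write $f = f^+ - f^-$ for its decomposition into positive and negative parts, with $f^\pm = (\partial_\tau g)_\pm \in \mathcal{M}^+(\partial\Omega)$. The mass balance condition gives $f^+(\partial\Omega) = f^-(\partial\Omega)$, and since $|Dg|(\partial\Omega) = |\partial_\tau g|(\partial\Omega) = f^+(\partial\Omega) + f^-(\partial\Omega)$, this yields $f^+(\partial\Omega) = \tfrac12 |Dg|(\partial\Omega)$.

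Next I would use the equality of the optimal values: by Theorems~\ref{thm:lgpbeckmannequivalence} and~\ref{thm:beckmannkantorovichequivalence} we have $\inf\eqref{eq:leastgradientproblem} = \inf\eqref{eq:beckmannproblem} = \min\eqref{eq:kantorovich}$. Since $u$ is a solution of \eqref{eq:leastgradientproblem}, it follows that $|Du|(\Omega) = \inf\eqref{eq:leastgradientproblem} = \min\eqref{eq:kantorovich} = \int_{\overline{\Omega}\times\overline{\Omega}} |x-y|\,\mathrm{d}\gamma$ for any optimal transport plan $\gamma$ between $f^+$ and $f^-$ (such a plan exists by standard optimal transport theory).

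Finally, since $f^\pm$ are concentrated on $\partial\Omega$, the plan $\gamma$ is concentrated on $\partial\Omega\times\partial\Omega$ (exactly as in Step 2 of the proof of Theorem~\ref{thm:sbvregularity}: $\gamma(\Omega\times\overline{\Omega}) = f^+(\Omega) = 0$ and $\gamma(\overline{\Omega}\times\Omega) = f^-(\Omega) = 0$), and on that set $|x-y| \leq \mathrm{diam}(\Omega)$. Hence
$$
|Du|(\Omega) = \int_{\partial\Omega\times\partial\Omega} |x-y|\,\mathrm{d}\gamma \leq \mathrm{diam}(\Omega)\,\gamma(\partial\Omega\times\partial\Omega) = \mathrm{diam}(\Omega)\,f^+(\partial\Omega) = \frac{\mathrm{diam}(\Omega)}{2}\,|Dg|(\partial\Omega),
$$
which is \eqref{eq:estimateontotalvariation}.

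There is no genuine obstacle: the only points to be careful about are that $|Du|(\Omega)$ is literally the optimal value of \eqref{eq:leastgradientproblem}, so the bound holds uniformly over the (possibly non-unique) set of minimisers, and that the total mass of $\gamma$ equals $f^+(\partial\Omega) = \tfrac12|Dg|(\partial\Omega)$ rather than the full mass $|Dg|(\partial\Omega)$, which is exactly where the factor $\tfrac12$ enters. Sharpness of the constant is seen by taking a boundary datum whose tangential derivative consists of two antipodal atoms of equal mass, forcing all the transport to occur along a diameter; I would record this as a remark rather than include it in the proof of the Proposition.
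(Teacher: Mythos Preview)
Your proof is correct and follows essentially the same approach as the paper: both use the equality $|Du|(\Omega) = \min\eqref{eq:kantorovich}$ and then bound the Kantorovich cost by $\mathrm{diam}(\Omega)\cdot f^+(\partial\Omega) = \tfrac12\,\mathrm{diam}(\Omega)\,|Dg|(\partial\Omega)$. The paper routes the same computation through the transport density via $\sigma_\gamma(\overline{\Omega}) = \int \mathcal{H}^1([x,y]\cap\overline{\Omega})\,\mathrm{d}\gamma$, but since $\mathcal{H}^1([x,y]\cap\overline{\Omega}) = |x-y|$ by convexity this is literally the same estimate; your version is if anything slightly more direct.
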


\begin{proof}
As usual, denote by $p$ the corresponding solution to the Beckmann problem \eqref{eq:beckmannproblem}, by $\gamma$ the solution the corresponding Monge-Kantorovich problem \eqref{eq:kantorovich}, and by $\sigma_\gamma$ its transport density. By formula \eqref{eq:definitionofsigmaversiontwo}, we have
\begin{equation*}
|Du|(\Omega) = |p|(\overline{\Omega}) = \sigma_\gamma(\overline{\Omega}) = \int_{\overline{\Omega} \times \overline{\Omega}} \mathcal{H}^1([x,y] \cap \overline{\Omega}) \, \mathrm{d}\gamma(x,y) \leq 
\end{equation*}
\begin{equation*}
\leq \mathrm{diam}(\Omega) \gamma(\overline{\Omega} \times \overline{\Omega}) = \mathrm{diam}(\Omega) f^+(\partial\Omega) = \frac{\mathrm{diam}(\Omega)}{2} |f|(\partial\Omega) = \frac{\mathrm{diam}(\Omega)}{2} |Dg|(\partial\Omega).
\end{equation*}
\end{proof}

The following Example shows that the the constant in inequality \eqref{eq:estimateontotalvariation} is sharp.

\begin{example}
Let $\Omega = B(0,1)$. Take $g \in BV(\partial\Omega)$ given by the formula $g(x,y) = \chi_{\{ (x,y) \in \partial\Omega: \, y > 0 \}}$. Then, $u \in BV(\Omega)$, the solution to problem \eqref{eq:leastgradientproblem} with boundary data $g$, is given by the formula $u(x,y) = \chi_{\{ (x,y) \in \Omega: \, y > 0 \}}$. In particular, we have
\begin{equation*}
|Du|(\Omega) = 2 = \frac{\mathrm{diam}(\Omega)}{2} |Dg|(\partial\Omega),
\end{equation*}
so we have equality in \eqref{eq:estimateontotalvariation}.
\end{example}

We proceed to prove some stability results for least gradient functions. Our main tool will be a stability result for optimal transport plans, see \cite[Theorem 1.50]{San2015}. For simplicity, we state it here for the Euclidean cost.

\begin{theorem}\label{thm:santambrogiostability}
Suppose that $X$ and $Y$ are compact metric spaces. Suppose that $\gamma_n \in \mathcal{P}(X \times Y)$ is a sequence of optimal transport plans between $\mu_n$ and $\nu_n$. If $\gamma_n \rightharpoonup \gamma$, then $\mu_n \rightharpoonup \mu$, $\nu_n \rightharpoonup \nu$ and $\gamma$ is an optimal transport plan between $\mu$ and $\nu$.
\end{theorem}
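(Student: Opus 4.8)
The plan is to follow the standard direct-method argument for stability of optimal plans, which rests on two ingredients: the fact that the marginals converge (an elementary continuity statement), and the lower semicontinuity / upper bound interplay of the cost functional under weak convergence. First I would deduce the convergence of the marginals: since the projections $\Pi_X : X \times Y \to X$ and $\Pi_Y : X \times Y \to Y$ are continuous and $X$, $Y$ are compact, weak convergence $\gamma_n \rightharpoonup \gamma$ forces $\mu_n = (\Pi_X)_\# \gamma_n \rightharpoonup (\Pi_X)_\# \gamma =: \mu$ and likewise $\nu_n = (\Pi_Y)_\# \gamma_n \rightharpoonup (\Pi_Y)_\# \gamma =: \nu$; testing against $\varphi \circ \Pi_X$ for $\varphi \in C(X)$ makes this immediate. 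In particular $\gamma$ is an admissible transport plan between $\mu$ and $\nu$, so it remains only to show it is optimal.

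Next I would establish optimality of $\gamma$ by comparing its cost with that of an arbitrary competitor. Since the cost $c(x,y) = |x-y|$ is continuous and bounded on the compact set $X \times Y$, weak convergence gives directly
\begin{equation*}
\int_{X \times Y} c \, \mathrm{d}\gamma = \lim_{n \to \infty} \int_{X \times Y} c \, \mathrm{d}\gamma_n.
\end{equation*}
Now let $\tilde{\gamma}$ be any transport plan between the limit marginals $\mu$ and $\nu$; the task is to produce plans $\tilde{\gamma}_n$ between $\mu_n$ and $\nu_n$ with $\int c \, \mathrm{d}\tilde{\gamma}_n \to \int c \, \mathrm{d}\tilde{\gamma}$. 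Combined with the optimality of each $\gamma_n$ (which gives $\int c \, \mathrm{d}\gamma_n \le \int c \, \mathrm{d}\tilde{\gamma}_n$), passing to the limit yields $\int c \, \mathrm{d}\gamma \le \int c \, \mathrm{d}\tilde{\gamma}$, and since $\tilde{\gamma}$ was arbitrary this is exactly optimality of $\gamma$.

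The construction of the recovery sequence $\tilde{\gamma}_n$ is the main obstacle, and the cleanest route is via the gluing lemma. Using that $\mu_n \rightharpoonup \mu$ and $\nu_n \rightharpoonup \nu$ on compact spaces, one can pick (optimal, or merely any) transport plans $\alpha_n$ between $\mu_n$ and $\mu$ and $\beta_n$ between $\nu$ and $\nu_n$ whose costs tend to zero — this is where compactness and the fact that weak convergence metrizes the Wasserstein distance on a compact space enters. Gluing $\alpha_n$, $\tilde{\gamma}$, $\beta_n$ along their common marginals produces a measure on $X \times X \times Y \times Y$; projecting onto the first and last coordinates gives $\tilde{\gamma}_n \in \mathcal{P}(X \times Y)$ with marginals $\mu_n$ and $\nu_n$, and the triangle inequality for $c$ together with $W_1(\mu_n,\mu) \to 0$, $W_1(\nu_n,\nu)\to 0$ bounds $\big|\int c\,\mathrm{d}\tilde{\gamma}_n - \int c\,\mathrm{d}\tilde{\gamma}\big|$ by a quantity vanishing as $n \to \infty$. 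Once this recovery estimate is in hand, the chain of inequalities above closes and the proof is complete. An alternative to the gluing construction, if one prefers to avoid it, is to invoke $\Gamma$-convergence of the constrained cost functionals, but the gluing argument is the most self-contained and I would present that.
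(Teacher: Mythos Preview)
Your proof plan is correct and follows a standard direct-method route. However, the paper does not supply its own proof of this statement: Theorem~\ref{thm:santambrogiostability} is quoted verbatim (specialised to the Euclidean cost) from \cite[Theorem~1.50]{San2015} and used as a black box in the stability arguments that follow. There is therefore nothing in the paper to compare your argument against.

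For completeness: your gluing construction is one of the two standard arguments. The other common route---and the one Santambrogio's presentation leans on---avoids building a recovery sequence $\tilde{\gamma}_n$ altogether by observing that optimality of $\gamma_n$ forces $\mathrm{supp}(\gamma_n)$ to be $c$-cyclically monotone, that this closed finitary condition passes to $\mathrm{supp}(\gamma)$ under weak convergence, and that $c$-cyclical monotonicity of the support is equivalent to optimality for a continuous cost on a compact space. That argument is shorter once the cyclical-monotonicity characterisation is in hand, whereas yours is more self-contained; either would be acceptable here.
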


Here, $\mathcal{P}(X \times Y)$ denotes the set of all probability measures on $X \times Y$. In particular, Theorem \ref{thm:santambrogiostability} implies that the infimum in \eqref{eq:kantorovich} for $f^+ = \mu$ and $f^- = \nu$ is the limit of the infima for $f^+ = \mu_n$ and $f^- = \nu_n$, see \cite[Theorem 1.51]{San2015}.

We will use Theorem \ref{thm:santambrogiostability} to obtain several stability results in the least gradient problem. In the first result, we will keep the domain fixed, i.e. take $X = Y = \overline{\Omega}$, and prove that on strictly convex domains the convergence of optimal transport plans given by Theorem \ref{thm:santambrogiostability} corresponds to strict convergence of solutions to the least gradient problem.

\begin{theorem}\label{thm:stability}
Suppose that $\Omega \subset \mathbb{R}^2$ is strictly convex. Suppose that $g,g_n \in BV(\partial\Omega)$ and that $g_n \rightarrow g$ strictly in $BV(\partial\Omega)$. Suppose that $u_n \in BV(\Omega)$ are solutions to problem \eqref{eq:leastgradientproblem} with boundary data $g_n$. Then, there exists $u \in BV(\Omega)$, a solution to problem \eqref{eq:leastgradientproblem}, such that (possibly after passing to a subsequence) we have $u_n \rightarrow u$ strictly in $BV(\Omega)$.
\end{theorem}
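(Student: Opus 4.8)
The strategy is to lift the strict convergence $g_n \to g$ in $BV(\partial\Omega)$ to the level of optimal transport plans, then use Theorem~\ref{thm:santambrogiostability} to extract a limiting optimal plan, and finally transfer the convergence back to the least gradient problem using the equivalence in Section~\ref{sec:preliminaries}. First, I would recall that strict convergence $g_n \to g$ in $BV(\partial\Omega)$ means $g_n \to g$ in $L^1(\partial\Omega)$ together with $|Dg_n|(\partial\Omega) \to |Dg|(\partial\Omega)$; writing $f_n = \partial_\tau g_n$ and $f = \partial_\tau g$, this gives $f_n \rightharpoonup f$ weakly-$*$ in $\mathcal{M}(\partial\Omega)$ with $|f_n|(\partial\Omega) \to |f|(\partial\Omega)$. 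The key point is that this implies $f_n^{\pm} \rightharpoonup f^{\pm}$ separately: weak-$*$ convergence of signed measures together with convergence of total masses forces convergence of the positive and negative parts, since $f_n^+ = \frac{1}{2}(|f_n| + f_n)$ and $f_n^- = \frac{1}{2}(|f_n| - f_n)$, and one checks that $|f_n| \rightharpoonup |f|$ (lower semicontinuity gives $|f|(\partial\Omega) \le \liminf |f_n|(\partial\Omega)$ on the one hand, and the mass convergence pins down the limit; more carefully, any weak-$*$ limit point $\lambda$ of $|f_n|$ satisfies $\lambda \ge |f|$ and $\lambda(\partial\Omega) = |f|(\partial\Omega)$, hence $\lambda = |f|$). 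Thus after normalizing the total masses (which converge to $\frac12|Dg|(\partial\Omega)$), we may regard the associated optimal plans $\gamma_n$ as elements of $\mathcal{P}(\overline{\Omega}\times\overline{\Omega})$ rescaled by a convergent sequence of constants.

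Next, since $\overline{\Omega}\times\overline{\Omega}$ is compact, the sequence $(\gamma_n)$ (suitably rescaled to be probability measures) is tight, so up to a subsequence $\gamma_n \rightharpoonup \gamma$; by Theorem~\ref{thm:santambrogiostability}, $\gamma$ is an optimal transport plan between $f^+$ and $f^-$. Now I invoke the equivalence of Section~\ref{sec:preliminaries}: each $u_n$ corresponds to a solution $p_n = R_{-\frac{\pi}{2}}Du_n$ of the Beckmann problem, which in turn equals $p_{\gamma_n}$ for the optimal plan $\gamma_n$, with $|Du_n| = |p_{\gamma_n}| = \sigma_{\gamma_n}$. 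Since $\Omega$ is strictly convex, $\sigma_{\gamma_n}(\partial\Omega) = 0$ for every $n$ and also $\sigma_\gamma(\partial\Omega) = 0$; the limiting plan $\gamma$ produces $p_\gamma = R_{-\frac{\pi}{2}}Du$ for a solution $u$ of \eqref{eq:leastgradientproblem} with boundary data $g$. I would then show $p_{\gamma_n} \rightharpoonup p_\gamma$ weakly-$*$ in $\mathcal{M}(\overline{\Omega};\mathbb{R}^2)$: this follows directly from the defining formula \eqref{eq:definitionofpgamma}, since for fixed $\varphi \in C(\overline{\Omega};\mathbb{R}^2)$ the map $(x,y)\mapsto \int_0^1 \omega_{x,y}'(t)\cdot\varphi(\omega_{x,y}(t))\,\mathrm{d}t$ is continuous and bounded on $\overline{\Omega}\times\overline{\Omega}$, and likewise $\sigma_{\gamma_n}\rightharpoonup\sigma_\gamma$ by the analogous continuity in \eqref{eq:definitionofsigma}. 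Consequently $Du_n \rightharpoonup Du$ weakly-$*$; combined with an $L^1$ bound (from Proposition~\ref{prop:totalvariationestimate} and the convergence of $|Dg_n|(\partial\Omega)$, plus a uniform $L^\infty$ bound via the maximum principle), a subsequence of $u_n$ converges in $L^1(\Omega)$, and the limit must be $u$ (or we adjust by additive constants, which are controlled by the boundary data).

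Finally, to upgrade weak-$*$ convergence of the gradients to strict convergence of $u_n$ in $BV(\Omega)$, I need $|Du_n|(\Omega) \to |Du|(\Omega)$, i.e. $\sigma_{\gamma_n}(\overline{\Omega}) \to \sigma_\gamma(\overline{\Omega})$. Lower semicontinuity gives $|Du|(\Omega) \le \liminf|Du_n|(\Omega)$; for the reverse, I use $\sigma_{\gamma_n}(\overline{\Omega}) = \int |x-y|\,\mathrm{d}\gamma_n = \min\eqref{eq:kantorovich}_n$, the minimal transport cost between $f_n^+$ and $f_n^-$, and the remark after Theorem~\ref{thm:santambrogiostability} (i.e. \cite[Theorem~1.51]{San2015}) that these minimal costs converge to $\min\eqref{eq:kantorovich}$ for $f^\pm$, which equals $\sigma_\gamma(\overline{\Omega}) = |Du|(\Omega)$. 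Hence $|Du_n|(\Omega) \to |Du|(\Omega)$, and together with $u_n \to u$ in $L^1(\Omega)$ this is precisely strict convergence in $BV(\Omega)$. The main obstacle I anticipate is the step showing $f_n^\pm \rightharpoonup f^\pm$ separately — extracting convergence of positive and negative parts from strict $BV$ convergence of the primitives is the one genuinely delicate measure-theoretic point, since weak-$*$ convergence alone is not enough and one really must exploit the convergence of total variations; everything downstream is a fairly mechanical transfer through the established equivalences and the continuity of the maps defining $p_\gamma$ and $\sigma_\gamma$.
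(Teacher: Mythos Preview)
Your proposal is correct and follows essentially the same route as the paper: both arguments pass from strict convergence $g_n \to g$ to weak convergence of the marginals $f_n^\pm \rightharpoonup f^\pm$, invoke compactness and Theorem~\ref{thm:santambrogiostability} to obtain a limiting optimal plan $\gamma$, use strict convexity of $\Omega$ to ensure $\sigma_\gamma(\partial\Omega)=0$, and then read off strict convergence of the $u_n$ from convergence of the transport costs. The only cosmetic difference is that the paper first rescales $g_n$ to $\overline{g}_n = \frac{|Dg|(\partial\Omega)}{|Dg_n|(\partial\Omega)}\,g_n$ so that all total variations equal $|Dg|(\partial\Omega)$ exactly (streamlining the application of Theorem~\ref{thm:santambrogiostability} to measures of equal mass) and undoes the rescaling at the end, whereas you normalise by convergent constants directly; the delicate point you rightly flag---that $f_n^\pm \rightharpoonup f^\pm$ separately---is needed in both versions, and the paper simply asserts it.
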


Note that because $\Omega$ is strictly convex, we have existence of solutions $u_n \in BV(\Omega)$ to problem \eqref{eq:leastgradientproblem} with boundary data $g_n \in BV(\partial\Omega)$, see \cite{DS,Gor2018CVPDE}.  

\begin{proof}
{\bf Step 1.} We will modify the sequence $g_n$ in order to be able to apply Theorem \ref{thm:santambrogiostability}. First, suppose that $|Dg|(\partial\Omega) > 0$; since $|Dg_n|(\partial\Omega) \rightarrow |Dg|(\partial\Omega)$, for sufficiently large $n$ we also have $|Dg_n|(\partial\Omega) > 0$. We set
\begin{equation*}
\overline{g}_n = \frac{|Dg|(\partial\Omega)}{|Dg_n|(\partial\Omega)} g_n.
\end{equation*}
Notice that
\begin{equation*}
\| \overline{g}_n - g \|_{L^1(\partial\Omega)} \leq \| g_n - g \|_{L^1(\partial\Omega)} + \| \overline{g}_n - g_n \|_{L^1(\partial\Omega)} \leq \| g_n - g \|_{L^1(\partial\Omega)} + \| g_n \|_{L^1(\partial\Omega)} \bigg| \frac{|Dg|(\partial\Omega)}{|Dg_n|(\partial\Omega)} - 1 \bigg|, 
\end{equation*}
hence $\overline{g}_n \rightarrow g$ in $L^1(\partial\Omega)$. Moreover, by definition we have $|D\overline{g}_n|(\partial\Omega) = |Dg|(\partial\Omega)$. 

Since we assumed that $u_n$ are solutions to the least gradient problem with boundary data $g_n$, then the functions
\begin{equation*}
\overline{u}_n = \frac{|Dg|(\partial\Omega)}{|Dg_n|(\partial\Omega)} u_n
\end{equation*}
are solutions to problem \eqref{eq:leastgradientproblem} with boundary data $\overline{g}_n$. The sequence $\overline{u}_n$ is uniformly bounded in $BV(\Omega)$: since $g_n \rightarrow g$ strictly in $BV(\partial\Omega)$, we have that $\sup_n \| g_n \|_\infty < \infty$. Hence, by the maximum principle (see for instance \cite[Theorem 5.1]{HKLS}), we have that $\sup_n \| u_n \|_\infty < \infty$. The total variations are uniformly bounded by Proposition \ref{prop:totalvariationestimate}. Hence, possibly passing to a subsequence, we have that $\overline{u}_n \rightarrow u$ weakly* in $BV(\Omega)$. In the next steps, we will upgrade the weak* convergence to strict convergence; in particular, this will imply that the trace of $u$ equals $g$.

{\bf Step 2.} Denote $\overline{f}_n = \partial_\tau \overline{g}_n$ and $f = \partial_\tau g$. Notice that since $\overline{g}_n \rightarrow g$ strictly in $BV(\partial\Omega)$, we also have $\overline{f}_n^+ \rightharpoonup f^+$ and $\overline{f}_n^- \rightharpoonup f^-$. Let $p_n = R_{-\frac{\pi}{2}} Du_n \in \mathcal{M}(\overline{\Omega}; \mathbb{R}^2)$ be a sequence of solutions to the Beckmann problem with boundary data $\overline{f}_n$. Possibly passing to a subsequence, we have $p_n \rightarrow p$ weakly in $\mathcal{M}(\overline{\Omega}; \mathbb{R}^2)$. Moreover, by uniqueness of the weak limit, we have $p = R_{-\frac{\pi}{2}} Du$ in $\Omega$; however, we do not yet know if $p$ gives no mass to the boundary.

{\bf Step 3.} Denote by $\gamma_n$ the optimal transport plan between $\overline{f}_n^+$ and $\overline{f}_n^-$ induced by the solution $p_n$ to the Beckmann problem. Since $\overline{\Omega} \times \overline{\Omega}$ is compact and all $\gamma_n$ have the same measure, by Prokhorov's theorem we have that $\gamma_n \rightharpoonup \gamma$ (possibly after passing to a subsequence). Hence, we are in position to apply Theorem \ref{thm:stability}; in the original formulation, $\gamma_n$ are probability measures, but we may apply it since all of them have the same mass (because all the measures $f_n^+$ have the same mass). We get that $\gamma$ is an optimal transport plan between $f^+$ and $f^-$. Since $\Omega$ is strictly convex, we get that
\begin{equation*}
\sigma_\gamma(\partial\Omega) = \int_{\overline{\Omega} \times \overline{\Omega}} \mathcal{H}^1(\partial\Omega \cap [x,y]) \, \mathrm{d}\gamma(x,y) = 0.
\end{equation*}
Hence, $\gamma$ corresponds to a minimiser of the Beckmann problem $p_\gamma$ which gives no mass to the boundary. Moreover, by equation \eqref{eq:definitionofpgamma} since $\gamma_n \rightharpoonup \gamma$, we have that $p_n \rightharpoonup p_\gamma$ weakly in $\mathcal{M}(\overline{\Omega}; \mathbb{R}^2)$; by the uniqueness of the weak limit, we have $p = p_\gamma$. Hence, $p$ is a minimiser of the Beckmann problem with boundary data $f$ and we have $|p|(\partial\Omega) = \sigma_\gamma(\partial\Omega) = 0$. This in turn implies that $u$ is a solution of the least gradient problem with boundary data $g$. Moreover, we have 
\begin{equation*}
\lim_{n \rightarrow \infty} |D\overline{u}_n|(\Omega) = \lim_{n \rightarrow \infty} |p_n|(\Omega) = \lim_{n \rightarrow \infty} |p_n|(\overline{\Omega}) = |p|(\overline{\Omega}) = |p|(\Omega) = |Du|(\Omega),
\end{equation*}
hence $\overline{u}_n \rightarrow u$ strictly in $BV(\Omega)$.

{\bf Step 4.} Now, we go back to the original sequence $u_n$. Notice that
\begin{equation*}
\| u_n - u \|_{L^1(\Omega)} \leq \| \overline{u}_n - u \|_{L^1(\Omega)} + \| \overline{u}_n - u_n \|_{L^1(\Omega)} \leq \| \overline{u}_n - u \|_{L^1(\Omega)} + \| u_n \|_{L^1(\Omega)} \bigg| \frac{|Dg|(\partial\Omega)}{|Dg_n|(\partial\Omega)} - 1 \bigg|, 
\end{equation*}
so $u_n \rightarrow u$ in $L^1(\Omega)$. Moreover,
\begin{equation*}
\lim_{n \rightarrow \infty} |Du_n|(\Omega) = \lim_{n \rightarrow \infty} \frac{|Dg|(\partial\Omega)}{|Dg_n|(\partial\Omega)} \cdot \lim_{n \rightarrow \infty} |D \overline{u}_n|(\Omega) = |Du|(\Omega),
\end{equation*}
so $u_n \rightarrow u$ strictly in $BV(\Omega)$, which finishes the proof.
\end{proof}

Since the formulation of Theorem \ref{thm:santambrogiostability} is quite general, we may adapt the argument used in the proof of Theorem \ref{thm:stability} to other settings. In the remainder of this Section, we will present a few results obtained in this way. First, let us focus on approximation of a convex domain $\Omega$ in the Hausdorff distance by a decreasing sequence of strictly convex domains $\Omega_n$. We will show that then the solutions to the approximate least gradient problems converge (after restriction to $\Omega$) to a solution of the original problem. This type of approximation has been used to prove existence of solutions on a convex domain, namely a rectangle, in the proof of \cite[Theorem 4.1]{GRS2017NA} (see also \cite[Theorem 3.8]{RS}). However, these results were proved in very specific settings; on the other hand, the result we give in Theorem \ref{thm:domainstability} does not depend on the form of the approximating sequence $\Omega_n$ and allows for arbitrary boundary data $g \in BV(\partial\Omega)$.

Given a convex domain $\Omega$ and a strictly convex domain $\Omega'$ such that $\Omega \subset \Omega'$, we denote by $\pi: \partial\Omega' \rightarrow \partial\Omega$ the (unique) orthogonal projection onto the closed convex set $\overline{\Omega}$. Since we assumed $\Omega \subset \Omega'$, the image of this map necessarily equals $\partial\Omega$. Moreover, by strict convexity of $\Omega'$ for $x,y \in \partial\Omega$ and any points $x' \in \pi^{-1}(x)$ and $y \in \pi^{-1}(y)$ the line segments $[x,x']$ and $[y,y']$ may intersect only if $x = y$. Now, for $g \in BV(\partial\Omega)$, we define $g': \partial\Omega' \rightarrow \mathbb{R}$ by the formula
\begin{equation*}
g'(x) = g(\pi(x)).
\end{equation*}
This definition requires a bit of clarification. Since $g \in BV(\Omega)$, $g$ admits a representative with the following properties: it is continuous everywhere except for its jump set, which is countable, and for every point in the jump set the value of $g$ equals the mean of its one-side limits. We define $g'$ using this representative. Clearly, $g' \in L^\infty(\partial\Omega')$; as we will see in the Lemma below, it actually lies in $BV(\partial\Omega')$.



\begin{lemma}\label{lem:approximationdomain}
Let the sets $\Omega,\Omega'$ and the functions $g,g'$ be defined as above. Then $g' \in BV(\partial\Omega')$. Furthermore, $|Dg'|(\partial\Omega') = |Dg|(\partial\Omega)$.
\end{lemma}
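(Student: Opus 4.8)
The plan is to show that $g'$ inherits the $BV$ structure of $g$ by analyzing the orthogonal projection $\pi: \partial\Omega' \to \partial\Omega$ and tracking how it transports the derivative measure. First I would observe that $\pi$ is a Lipschitz map: since $\overline{\Omega}$ is convex, the orthogonal projection onto $\overline{\Omega}$ is $1$-Lipschitz on all of $\mathbb{R}^2$, hence in particular on $\partial\Omega'$. The key geometric point is that, on $\partial\Omega$, the projection $\pi$ ``collapses'' certain arcs of $\partial\Omega'$ to single points (those $x' \in \partial\Omega'$ whose nearest point in $\overline{\Omega}$ is a vertex-type point of $\partial\Omega$), but away from such collapsed arcs $\pi$ is locally bi-Lipschitz; more importantly, $\pi$ restricted to $\partial\Omega'$ is \emph{monotone} with respect to the natural cyclic orderings of the two curves. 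This monotonicity is what will give equality of total variations rather than just an inequality.

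The main steps I would carry out are as follows. Parametrize $\partial\Omega'$ by arc length (or by a monotone angular parameter) on a circle $S^1$, and similarly $\partial\Omega$, so that $g$ becomes a function of bounded variation on an interval with endpoints identified, and $\pi$ becomes a continuous nondecreasing surjection between these parameter circles (after choosing compatible orientations). Then $g' = g \circ \pi$ is the composition of a $BV$ function on $S^1$ with a continuous monotone map. A standard fact about one-dimensional $BV$ functions is that precomposition with a continuous monotone surjection preserves membership in $BV$ and preserves the total variation exactly: intuitively, the monotone reparametrization neither creates new oscillation nor destroys any, it only ``stretches'' the parameter, possibly inserting intervals of constancy (the collapsed arcs map to points, so $g'$ is locally constant there and contributes nothing). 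Concretely, for any partition $t_0 < t_1 < \dots < t_N$ of the parameter circle for $\partial\Omega'$, the points $\pi(t_i)$ form a (weakly) ordered set of points on $\partial\Omega$, so $\sum |g'(t_{i+1}) - g'(t_i)| = \sum |g(\pi(t_{i+1})) - g(\pi(t_i))| \leq |Dg|(\partial\Omega)$, giving $|Dg'|(\partial\Omega') \leq |Dg|(\partial\Omega)$; conversely, since $\pi$ is surjective, any partition of $\partial\Omega$ can be lifted (choosing preimages) to a partition of $\partial\Omega'$ realizing the same sum, giving the reverse inequality. One must be slightly careful at jump points of $g$, which is exactly why the representative of $g$ taking the average of one-sided limits was fixed in the definition of $g'$: this ensures the pointwise values used in the variation sums are the ``right'' ones and the supremum over partitions is unaffected.

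The main obstacle I anticipate is making the ``monotone reparametrization preserves total variation'' argument fully rigorous in the presence of the collapsed arcs and of the jump set of $g$ — i.e., handling the interaction between the non-injectivity of $\pi$ and the discontinuities of $g$. One has to verify that no two distinct jump points of $g$ get identified under $\pi$ (they do not, since $\pi$ is surjective onto $\partial\Omega$ and its fibers are connected arcs, so the preimage of a jump point is a nonempty closed arc disjoint from the preimages of other points) and that the value assigned to $g'$ on each collapsed arc is consistent with the one-sided limits of $g'$ from the adjacent non-collapsed portions. Once the parametrization is set up so that $\pi$ becomes a genuine continuous nondecreasing surjection $S^1 \to S^1$, the equality $|Dg'| = |Dg|$ follows from the partition-sup characterization of one-dimensional total variation, and the $BV$ membership is immediate from finiteness of $|Dg|(\partial\Omega)$. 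I would present this via the partition characterization rather than via distributional derivatives, since the geometry of $\pi$ is cleanest to state in terms of monotone reparametrization.
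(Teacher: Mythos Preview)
Your proposal is correct and follows essentially the same route as the paper: both arguments use the supremum-over-partitions characterisation of one-dimensional total variation, show $|Dg'|(\partial\Omega') \leq |Dg|(\partial\Omega)$ by pushing a partition of $\partial\Omega'$ forward under $\pi$ (using that $\pi$ preserves the cyclic order), and obtain the reverse inequality by lifting an almost-optimal partition of $\partial\Omega$ through the surjectivity of $\pi$. The paper's proof is simply a terser version of what you describe, omitting the explicit $S^1$-parametrisation and the discussion of collapsed arcs and jump points that you flag as potential obstacles.
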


\begin{proof}
We will use the one-dimensional definition of BV functions. We say that $\{ p_0, p_1, ..., p_k \}$ is a partition of $\partial\Omega'$, if the points are ordered in such a way that on one of the arcs on $\partial\Omega'$ between $p_i$ and $p_{i+1}$ there are no other points from this set. We complement this by setting $p_{k+1} = p_0$. Moreover, we make an analogous definition on $\partial\Omega$, and denote by $\mathcal{P}$ the family of partitions of $\partial\Omega$ and by $\mathcal{P}'$ the family of partitions of $\partial\Omega'$. Then, we have
\begin{equation*}
|Dg'|(\partial\Omega') = \sup_{\mathcal{P}'} \sum_{i = 0}^{k} |g'(p_{i+1}) - g'(p_i)| = \sup_{\mathcal{P}'} \sum_{i = 0}^{k} |g(\pi(p_{i+1}) - g(\pi(p_i))| \leq |Dg|(\partial\Omega),
\end{equation*}
because if $\{ p_i \}$ is a partition of $\partial\Omega'$, then $\{ \pi(p_i) \}$ is a partition of $\partial\Omega$ (with possibly some points being equal); this is immediate if we recall that for any $x,y \in \partial\Omega$ the line segments between $x,y$ and points in their preimages cannot intersect unless $x = y$.

On the other hand, take any $\varepsilon > 0$ and fix a partition $\{ q_i \}$ of $\partial\Omega$ which is almost optimal, i.e.
\begin{equation*}
|Dg|(\partial\Omega) \leq \sum_{i = 0}^{k} |g(q_{i+1}) - g(q_i)| + \varepsilon.
\end{equation*}
Then, notice that the value of $g'$ is the same for all points in the preimage of any given point and fix any points $p_i \in \pi^{-1}(q_i)$. Hence,
\begin{equation*}
|Dg|(\partial\Omega) \leq \sum_{i = 0}^{k} |g(q_{i+1}) - g(q_i)| + \varepsilon = \sum_{i = 0}^{k} |g'(p_{i+1}) - g'(p_i)| + \varepsilon \leq |Dg'|(\partial\Omega') + \varepsilon,
\end{equation*}
because using the same argument as before we see that if $\{ q_i \}$ is a partition of $\partial\Omega$, then $\{ p_i \}$ is a partition of $\partial\Omega'$. Since $\varepsilon > 0$ was arbitrary, we get that $g' \in BV(\partial\Omega')$ and that $|Dg'|(\partial\Omega') = |Dg|(\partial\Omega)$.
\end{proof}

We will apply the above results to a sequence of approximations of the original domain $\Omega$. Namely, suppose that $\Omega_n$ is a decreasing sequence of open, bounded, strictly convex sets. Suppose additionally that $\mathrm{dist}_H(\partial\Omega_n, \partial\Omega) \rightarrow 0$, i.e. the Hausdorff distance between $\Omega_n$ and $\Omega$ converges to zero. Then, we set $\pi_n: \partial\Omega_n \rightarrow \partial\Omega$ to be the projection onto the closed convex set $\overline{\Omega}$ and set $g_n(x) = g(\pi_n(x))$ for any $x \in \partial\Omega_n$. By Lemma \ref{lem:approximationdomain}, whenever $g \in BV(\partial\Omega)$, we have $g_n \in BV(\partial\Omega_n)$. Hence, the tangential derivatives $f_n = \partial_\tau g_n$ are finite measures; in the Lemma below, we prove that they converge weakly to the tangential derivative $f= \partial_\tau g$.

\begin{lemma}\label{lem:approximatingfn}
With $f_n$ as defined above, up to a subsequence we have $f_n^\pm \rightharpoonup f^\pm$ weakly in $\mathcal{M}(\overline{\Omega_1})$.
\end{lemma}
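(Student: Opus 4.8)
The plan is to show that the measures $f_n = \partial_\tau g_n$ form a bounded sequence in $\mathcal{M}(\overline{\Omega_1})$, extract a weakly-$*$ convergent subsequence, identify the limit with $f = \partial_\tau g$, and then upgrade convergence of $f_n$ to convergence of the positive and negative parts. First I would note that all the $\partial\Omega_n$ and $\partial\Omega$ sit inside the fixed compact set $\overline{\Omega_1}$, so we may regard each $f_n$ as an element of $\mathcal{M}(\overline{\Omega_1})$ supported on $\partial\Omega_n$. By Lemma~\ref{lem:approximationdomain} we have $|f_n|(\overline{\Omega_1}) = |Dg_n|(\partial\Omega_n) = |Dg|(\partial\Omega) = |f|(\overline{\Omega_1})$ for every $n$; in particular the sequence $(f_n)$ is bounded in total variation, so by weak-$*$ compactness of bounded sets in $\mathcal{M}(\overline{\Omega_1})$ (Banach–Alaoglu / Riesz representation) there is a subsequence, not relabelled, and a signed measure $\mu \in \mathcal{M}(\overline{\Omega_1})$ with $f_n \rightharpoonup \mu$.

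Next I would identify $\mu$ with $f$. The natural way is to test against $C^1$ functions. For $\varphi \in C^1(\overline{\Omega_1})$, integration of $\partial_\tau g_n$ against $\varphi$ on $\partial\Omega_n$ equals $-\int_{\partial\Omega_n} g_n \, \partial_\tau \varphi \, \mathrm{d}\mathcal{H}^1$ (tangential integration by parts on the closed curve $\partial\Omega_n$), and by the definition $g_n = g\circ\pi_n$ together with $\mathrm{dist}_H(\partial\Omega_n,\partial\Omega)\to 0$, one pushes the integral onto $\partial\Omega$ and passes to the limit using that $g$ has a good representative (continuous off a countable jump set, with the mean-value convention at jumps) and that $\pi_n \to \mathrm{id}$ uniformly. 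This yields $\int \varphi \, \mathrm{d}\mu = -\int_{\partial\Omega} g\, \partial_\tau\varphi\, \mathrm{d}\mathcal{H}^1 = \int \varphi\, \mathrm{d}f$ for all such $\varphi$, hence $\mu = f$. (Alternatively, one can argue more softly: $g_n \to g$ in $L^1$ against arc-length measures transported via $\pi_n$, and distributional tangential differentiation is continuous, but the computation above is cleaner.)

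Finally, to get $f_n^\pm \rightharpoonup f^\pm$ rather than merely $f_n \rightharpoonup f$, I would invoke the equality of masses. Up to a further subsequence we may assume $f_n^+ \rightharpoonup \alpha$ and $f_n^- \rightharpoonup \beta$ for some $\alpha,\beta \in \mathcal{M}^+(\overline{\Omega_1})$ (each is bounded in total variation by $|Dg|(\partial\Omega)$), and then $\alpha - \beta = f$. By lower semicontinuity of the total variation under weak-$*$ convergence, $\alpha(\overline{\Omega_1}) \le \liminf_n f_n^+(\overline{\Omega_1}) = \tfrac12|Dg|(\partial\Omega)$ and likewise for $\beta$; but $\alpha(\overline{\Omega_1}) + \beta(\overline{\Omega_1}) \ge |\alpha - \beta|(\overline{\Omega_1}) = |f|(\overline{\Omega_1}) = |Dg|(\partial\Omega)$, forcing $\alpha(\overline{\Omega_1}) = \beta(\overline{\Omega_1}) = \tfrac12|Dg|(\partial\Omega) = f^{\pm}(\overline{\Omega_1})$. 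Together with $\alpha - \beta = f$ and $\alpha,\beta \ge 0$, the minimality of the Jordan decomposition gives $\alpha \ge f^+$ and $\beta \ge f^-$; comparing masses then yields $\alpha = f^+$ and $\beta = f^-$.

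The main obstacle I expect is the limit passage in the identification of $\mu$ with $f$: one must handle the interaction between the moving boundaries $\partial\Omega_n$ and the jump set of $g$ carefully, ensuring that no mass of the jump part of $f$ is lost or displaced in the limit. Using the arc-length integration-by-parts formula against $C^1$ test functions — where the only object one integrates against $\partial_\tau\varphi$ is the bounded function $g_n = g\circ \pi_n$, not the measure $\partial_\tau g_n$ — sidesteps this, since $g_n \to g$ pointwise off a countable set and boundedly, so dominated convergence (after transporting to $\partial\Omega$ via the bi-Lipschitz-in-the-limit maps $\pi_n$) applies directly.
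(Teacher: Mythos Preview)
Your proposal is correct in outline and reaches the same conclusion, but the identification step $\mu=f$ is handled differently from the paper. The paper does not integrate by parts; instead it first proves the algebraic relation $f=(\pi_n)_\# f_n$ (by checking it on open arcs of $\partial\Omega$), and then for any Lipschitz test function $\varphi$ writes
\[
\int \varphi\,\mathrm{d}f_n-\int \varphi\,\mathrm{d}f=\int(\varphi-\varphi\circ\pi_n)\,\mathrm{d}f_n,
\]
which is bounded in absolute value by $L\cdot\mathrm{dist}_H(\partial\Omega_n,\partial\Omega)\cdot|f_n|(\overline{\Omega_1})\to0$. This avoids entirely the moving-boundary issues you flag: no tangential derivatives along $\partial\Omega_n$, no arc-length Jacobians, no need for $\pi_n$ to be invertible. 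Your integration-by-parts route can be made rigorous, but it carries more overhead: $\partial_\tau\varphi$ depends on the curve, the arc-length measures $\mathcal{H}^1\llcorner\partial\Omega_n$ vary with $n$, and your phrase ``bi-Lipschitz-in-the-limit maps $\pi_n$'' is not quite right in the generality of the lemma, since for merely convex $\Omega$ the projection $\pi_n$ can collapse arcs onto flat pieces of $\partial\Omega$ and is then not injective. None of this is fatal---convergence of convex boundaries in Hausdorff distance does give enough control on tangents and perimeters---but the paper's pushforward argument is shorter and sidesteps all of it. Your final step, extracting subsequential limits of $f_n^\pm$ and using the mass equality $|f_n|(\overline{\Omega_1})=|f|(\partial\Omega)$ together with minimality of the Jordan decomposition, is exactly what the paper does (indeed you spell out the Jordan-decomposition inequality more carefully than the paper, which compresses it to ``by uniqueness of the weak limit'').
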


\begin{proof}
First, notice that by construction of $g_n$ we have $f = (\pi_n)_\# f_n$. It is sufficient to show that for any open arc $\Gamma \subset \partial\Omega$ we have $f(\Gamma) = f_n(\pi_n^{-1}(\Gamma))$. Assume that the endpoints of $\Gamma$ are $p$ and $q$; then, up to choosing an orientation of $\partial\Omega$, we have $f(\Gamma) = g(p) - g(q)$ (to be exact, in this formula and the next we have one-sided limits of $g$ at $p$ and $q$). By properties of $\pi_n$, we also have that $\pi_n^{-1}(\Gamma)$ is an open arc on $\partial\Omega_n$ with endpoints $p_n$ and $q_n$; as before, we have $f_n(\pi_n^{-1}(\Gamma)) = g_n(p_n) - g_n(q_n)$. But this implies
\begin{equation*}
f_n(\pi_n^{-1}(\Gamma)) = g_n(p_n) - g_n(q_n) = g(\pi(p_n)) - g(\pi(q_n)) = g(p) - g(q) = f(\Gamma),
\end{equation*}
hence $f = (\pi_n)_\# f_n$.

Now, suppose that $\varphi \in \mathrm{Lip}(\overline{\Omega_1})$ with Lipschitz constant $L$. Then,
\begin{equation*}
\bigg| \int_{\overline{\Omega_1}} \varphi \, \mathrm{d}f_n - \int_{\overline{\Omega_1}} \varphi \, \mathrm{d}f \bigg| = \bigg| \int_{\overline{\Omega_1}} \varphi \, \mathrm{d}f_n - \int_{\overline{\Omega_1}} \varphi \, \mathrm{d} ((\pi_n)_\# f_n) \bigg| = \bigg| \int_{\overline{\Omega_1}} \varphi \, \mathrm{d}f_n - \int_{\overline{\Omega_1}} \varphi \circ \pi_n \, \mathrm{d}f_n \bigg| =
\end{equation*}
\begin{equation*}
= \bigg| \int_{\overline{\Omega_1}} ( \varphi - \varphi \circ \pi_n) \, \mathrm{d}f_n \bigg| \leq \int_{\overline{\Omega_1}} |\varphi - \varphi \circ \pi_n| \, \mathrm{d} |f_n| \leq \int_{\overline{\Omega_1}} L \, \mathrm{dist}_H(\partial\Omega_n, \partial\Omega) \, \mathrm{d} |f_n|,
\end{equation*}
which goes to zero as $n \rightarrow \infty$, because $\mathrm{dist}_H(\partial\Omega_n, \partial\Omega) \rightarrow 0$ and by Lemma \ref{lem:approximationdomain} we have $|f_n|(\overline{\Omega_1}) = |f_n|(\partial\Omega_n) = |f|(\partial\Omega)$. Hence, $f_n \rightharpoonup f$ as measures on $\overline{\Omega_1}$. Now, decompose it into $f_n = f_n^+ - f_n^-$; up to a subsequence we have $f_n^+ \rightharpoonup \mu$ and $f_n^- \rightharpoonup \nu$ weakly as measures on $\overline{\Omega_1}$, where $\mu$ and $\nu$ are positive measures with total mass equal to $\frac{1}{2} |f|(\partial\Omega)$. Since $f_n = f_n^+ - f_n^- \rightharpoonup \mu - \nu$ and $f_n \rightharpoonup f$, by uniqueness of the weak limit we have that in fact $\mu = f^+$ and $\nu = f^-$.
\end{proof}

\begin{theorem}\label{thm:domainstability}
Suppose that $\Omega$ is strictly convex, $g \in BV(\partial\Omega)$ and $\Omega_n$ and $g_n$ are constructed as above. Then, if $u_n \in BV(\Omega_n)$ are solutions to problem \eqref{eq:leastgradientproblem} with boundary data $g_n$, then on a subsequence we have $u_n|_\Omega \rightarrow u$ strictly in $BV(\Omega)$. Moreover, $u$ is a solution to the least gradient problem with boundary data $g$. 
\end{theorem}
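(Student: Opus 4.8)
The plan is to reproduce the structure of the proof of Theorem \ref{thm:stability}, working on the fixed compact set $\overline{\Omega_1}$. First I would pass, via Theorems \ref{thm:lgpbeckmannequivalence} and \ref{thm:beckmannkantorovichequivalence}, from each solution $u_n$ of \eqref{eq:leastgradientproblem} on $\Omega_n$ to the associated Beckmann minimiser $p_n = R_{-\frac{\pi}{2}} Du_n \in \mathcal{M}(\overline{\Omega_n}; \mathbb{R}^2)$ and an optimal transport plan $\gamma_n$ between $f_n^\pm = (\partial_\tau g_n)_\pm$, with $p_n = p_{\gamma_n}$ and $|p_n| = \sigma_{\gamma_n}$. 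By Lemma \ref{lem:approximationdomain}, all the plans $\gamma_n$ have the same mass $\frac{1}{2}|Dg|(\partial\Omega)$, so, in contrast with Theorem \ref{thm:stability}, no renormalisation of the data is needed; regarding them as measures on $\overline{\Omega_1} \times \overline{\Omega_1}$, Prokhorov's theorem gives a subsequence with $\gamma_n \rightharpoonup \gamma$. Since the cost $|x-y|$ is continuous and bounded there, Theorem \ref{thm:santambrogiostability} together with Lemma \ref{lem:approximatingfn} shows that $\gamma$ is an optimal transport plan between $f^+$ and $f^-$, both supported on $\partial\Omega$, and strict convexity of $\Omega$ then forces $\sigma_\gamma(\partial\Omega) = 0$.

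Next I would pass to the limit on the Beckmann side, as in the proof of Theorem \ref{thm:stability}. The maps $\gamma \mapsto p_\gamma$ and $\gamma \mapsto \sigma_\gamma$ of \eqref{eq:definitionofpgamma} and \eqref{eq:definitionofsigma} are weakly continuous, so $p_n \rightharpoonup p_\gamma$ and $\sigma_{\gamma_n} \rightharpoonup \sigma_\gamma$; since $|p_\gamma| = \sigma_\gamma$ charges no subset of $\partial\Omega$, this is stable under restriction to $\overline{\Omega}$ and the total masses converge, $|Du_n|(\Omega_n) = \sigma_{\gamma_n}(\overline{\Omega_n}) = \int |x-y| \,\mathrm{d}\gamma_n \to \int |x-y| \,\mathrm{d}\gamma = \sigma_\gamma(\overline{\Omega})$; in particular $\sigma_{\gamma_n}(\partial\Omega) \to 0$ and $\sigma_{\gamma_n}(\Omega_n \setminus \overline{\Omega}) \to 0$. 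The maximum principle gives $\|u_n\|_{L^\infty(\Omega_n)} = \|g\|_{L^\infty(\partial\Omega)}$ and Proposition \ref{prop:totalvariationestimate} gives $|Du_n|(\Omega) \leq |Du_n|(\Omega_n) \leq \frac{\mathrm{diam}(\Omega_1)}{2}|Dg|(\partial\Omega)$, so $u_n|_\Omega$ is bounded in $BV(\Omega)$ and, after a further subsequence, $u_n|_\Omega \to u$ in $L^1(\Omega)$ and weakly* in $BV(\Omega)$. Then $R_{-\frac{\pi}{2}} D(u_n|_\Omega) = p_n|_\Omega \rightharpoonup p_\gamma$, so by uniqueness of the weak limit $R_{-\frac{\pi}{2}} Du = p_\gamma$ in $\Omega$; since $p_\gamma$ is a Beckmann minimiser for $f = \partial_\tau g$ with $|p_\gamma|(\partial\Omega) = 0$, Theorem \ref{thm:lgpbeckmannequivalence}(3) identifies $u$, up to an additive constant, with a solution of \eqref{eq:leastgradientproblem} for $g$, and $|Du|(\Omega) = |p_\gamma|(\Omega) = \sigma_\gamma(\overline{\Omega})$. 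Lower semicontinuity of the total variation together with the mass convergence then gives $|Du|(\Omega) \leq \liminf_n |Du_n|(\Omega) \leq \limsup_n |Du_n|(\Omega_n) = \sigma_\gamma(\overline{\Omega}) = |Du|(\Omega)$, whence $u_n|_\Omega \to u$ strictly in $BV(\Omega)$.

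The remaining point, which I expect to be the main obstacle since it has no counterpart in Theorem \ref{thm:stability}, is to verify that the trace of $u$ on $\partial\Omega$ equals $g$, i.e.\ that the additive constant above vanishes. Strict convergence gives $T(u_n|_\Omega) \to Tu$ in $L^1(\partial\Omega)$, so it suffices to show $T(u_n|_\Omega) \to g$ in $L^1(\partial\Omega)$. From $|Du_n|(\partial\Omega) = |p_n|(\partial\Omega) = \sigma_{\gamma_n}(\partial\Omega) \to 0$ one obtains $\int_{\partial\Omega} |T^- u_n - T^+ u_n| \,\mathrm{d}\mathcal{H}^1 \to 0$, where $T^\pm u_n$ denote the traces of $u_n$ along $\partial\Omega$ from $\Omega$ and from the collar $\Omega_n \setminus \overline{\Omega}$ respectively, so it remains to compare $T^+ u_n$ with $g$. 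For this I would foliate the collar by the outward normal segments $t \mapsto (1-t)y + t \, \pi_n^{-1}(y)$, $y \in \partial\Omega$, $t \in [0,1]$; an elementary computation shows that for convex $\Omega$ and $\Omega_n$ the Jacobian of this parametrisation is bounded below by the segment length $|\pi_n^{-1}(y) - y|$, and slicing $u_n$ along these segments gives $\int_{\partial\Omega} |T^+ u_n(y) - g_n(\pi_n^{-1}(y))| \,\mathrm{d}\mathcal{H}^1(y) \leq |Du_n|(\Omega_n \setminus \overline{\Omega}) \to 0$, where $g_n(\pi_n^{-1}(y)) = g(\pi_n(\pi_n^{-1}(y))) = g(y)$ for $\mathcal{H}^1$-a.e.\ $y$. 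Combining the two estimates yields $Tu = g$, and since moreover $|Du|(\Omega) = \min \eqref{eq:kantorovich} = \inf \eqref{eq:leastgradientproblem}$ by the equivalences, $u$ is a solution of \eqref{eq:leastgradientproblem} with boundary data $g$. The anisotropic case for a strictly convex norm is handled identically, only Lemmas \ref{lem:approximationdomain} and \ref{lem:approximatingfn} and the equivalences of Section \ref{sec:preliminaries} being used; the delicate technical step is the collar-slicing estimate together with the uniform control of the normal-segment parametrisation as $\Omega_n \downarrow \Omega$.
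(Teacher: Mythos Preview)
Your proof follows essentially the same route as the paper's: pass to Beckmann minimisers and optimal plans on the common ambient compact $\overline{\Omega_1}$, use Prokhorov and Theorem \ref{thm:santambrogiostability} together with Lemma \ref{lem:approximatingfn} to obtain a limiting optimal plan $\gamma$ between $f^\pm$, invoke strict convexity of $\Omega$ to get $\sigma_\gamma(\partial\Omega) = 0$, identify $p_\gamma = R_{-\frac{\pi}{2}} Du$ by uniqueness of weak limits, and deduce strict convergence from convergence of the total masses $\int |x-y|\,\mathrm{d}\gamma_n \to \int |x-y|\,\mathrm{d}\gamma$. The organisation, and in particular the observation that no renormalisation is needed thanks to Lemma \ref{lem:approximationdomain}, matches the paper exactly.

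Where you go beyond the paper is in isolating and resolving the additive-constant issue. Theorem \ref{thm:lgpbeckmannequivalence}(3) only identifies $u$ with a solution for $g$ up to a constant, and the paper's proof simply asserts that $u$ solves \eqref{eq:leastgradientproblem} with datum $g$ without pinning that constant down. In Theorem \ref{thm:stability} this is harmless, since strict convergence implies $T\overline u_n \to Tu$ in $L^1(\partial\Omega)$ and there $T\overline u_n = \overline g_n \to g$; here, however, $T(u_n|_\Omega)$ is \emph{not} $g_n$, so an extra argument is genuinely needed, and you are right to flag it. Your collar-slicing argument supplies it: from weak convergence $\sigma_{\gamma_n} \rightharpoonup \sigma_\gamma$ together with convergence of total masses on $\overline{\Omega_1}$ one gets $\limsup_n \sigma_{\gamma_n}(F) \le \sigma_\gamma(F) = 0$ for the closed sets $F = \partial\Omega$ and $F = \overline{\Omega_1}\setminus\Omega$, hence $|Du_n|(\partial\Omega)\to 0$ and $|Du_n|(\Omega_n\setminus\overline\Omega)\to 0$; slicing $u_n$ along the outward normal segments then yields $\int_{\partial\Omega}|T^+ u_n - g|\,\mathrm{d}\mathcal{H}^1 \le |Du_n|(\Omega_n\setminus\overline\Omega)$. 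The only technical point requiring care is the Jacobian bound: what you actually need is that for strictly convex $\Omega$ the arclength normal map $(y,s)\mapsto y + s\,\nu(y)$ has Jacobian $1+\kappa(y)s \ge 1$ for $\mathcal{H}^1$-a.e.\ $y$ (nonnegative curvature), which is classical; your phrasing ``bounded below by the segment length'' refers to the $t\in[0,1]$ parametrisation and is equivalent after rescaling. With that in place your argument is complete and in fact makes explicit a step the paper leaves implicit.
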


\begin{proof}
{\bf Step 1.} Denote $\overline{f}_n = \partial_\tau \overline{g}_n$ and $f = \partial_\tau g$. By Lemma \ref{lem:approximatingfn}, we have $\overline{f}_n^+ \rightharpoonup f^+$ and $\overline{f}_n^- \rightharpoonup f^-$. Let $p_n = R_{-\frac{\pi}{2}} Du_n \in \mathcal{M}(\overline{\Omega}; \mathbb{R}^2)$ be a sequence of solutions to the Beckmann problem on $\Omega_n$ with boundary data $\overline{f}_n$. As in the proof of Theorem \ref{thm:stability}, possibly passing to subsequences, we have $p_n|_{\overline{\Omega}} \rightarrow p$ weakly in $\mathcal{M}(\overline{\Omega}; \mathbb{R}^2)$ and $u_n|_\Omega \rightarrow u$ in $L^1(\Omega)$. Moreover, by uniqueness of the weak limit, we have $p = R_{-\frac{\pi}{2}} Du$ in $\Omega$; however, we do not yet know if $p$ gives no mass to the boundary.

{\bf Step 2.} Denote by $\gamma_n$ the optimal transport plan between $\overline{f}_n^+$ and $\overline{f}_n^-$ induced by the solution $p_n$ to the Beckmann problem on $\Omega_n$. Since $\overline{\Omega_1} \times \overline{\Omega_1}$ is compact and all $\gamma_n$ have the same measure, by Prokhorov's theorem we have that $\gamma_n \rightharpoonup \gamma$ (possibly after passing to a subsequence). We apply Theorem \ref{thm:stability} and get that $\gamma \in \mathcal{M}^+(\overline{\Omega_1} \times \overline{\Omega_1})$ is an optimal transport plan between $f^+$ and $f^-$. Since all the transport rays corresponding to $\gamma$ are inside the convex hull of $\partial\Omega$, by convexity of $\Omega$ we may require that $\gamma \in \mathcal{M}^+(\overline{\Omega} \times \overline{\Omega})$, so $\gamma$ is an optimal transport plan between $f^+$ and $f^-$ in the Monge-Kantorovich problem defined on $\overline{\Omega}$. Moreover,
\begin{equation*}
\sigma_\gamma(\partial\Omega) = \int_{\overline{\Omega} \times \overline{\Omega}} \mathcal{H}^1(\partial\Omega \cap [x,y]) \, \mathrm{d}\gamma(x,y) = 0.
\end{equation*}
As in the proof of Theorem \ref{thm:stability}, this implies that $p$ is a minimiser of the Beckmann problem on $\Omega$ with boundary data $f$ and we have $|p|(\partial\Omega) = \sigma_\gamma(\partial\Omega) = 0$. This in turn implies that $u$ is a solution of the least gradient problem with boundary data $g$. Moreover, we have 
\begin{equation*}
|Du|(\Omega) \leq \liminf_{n \rightarrow \infty} |D(u_n|_\Omega)|(\Omega) = \liminf_{n \rightarrow \infty} |p_n|_\Omega|(\Omega) \leq \lim_{n \rightarrow \infty} |p_n|(\overline{\Omega_n}) = |p|(\overline{\Omega}) = |p|(\Omega) = |Du|(\Omega),
\end{equation*}
hence $u_n|_\Omega \rightarrow u$ strictly in $BV(\Omega)$.
\end{proof}

A simple application of the above result is that if we want to directly compute the minimiser or study its structure, it may sometimes be more convenient to approximate the domain by a sequence of domains with smooth boundary. Moreover, given a boundary datum $g \in BV(\partial\Omega)$ we may combine Theorems \ref{thm:stability} and \ref{thm:domainstability} to construct an approximating sequence with better regularity properties than $g \circ \pi_n$.

Finally, let us comment on the case when $\Omega$ is a domain which is convex but not strictly convex. The least gradient problem for such domains has been first studied on a rectangle in \cite{GRS2017NA}, and then on polygonal domains in \cite{RS} and \cite{RS2}. On non-strictly convex domains, existence of solutions may fail even in the simplest settings: suppose that $\Omega$ is a polygon and $g$ equals one on one of its sides (denoted by $l$) and zero on all the other sides. Then, $f = \partial_\tau g$ is the difference of two Dirac deltas at the endpoints of $l$. We easily compute the (unique) optimal transport plan $\gamma$ and notice that it gives nonzero mass to $l$. But if a solution to problem \eqref{eq:leastgradientproblem} existed, there would be a corresponding optimal transport plan which gives no mass to the boundary, contradiction. Hence, in order to prove existence of solutions to problem \eqref{eq:leastgradientproblem}, the authors introduce different sets of admissibility conditions. Let us focus on one such condition; when $g$ restricted to every maximal line segment $l_i \subset \partial\Omega$ is monotone.

\begin{corollary}\label{cor:convexexistence}
Suppose that $\Omega \subset \mathbb{R}^2$ is convex. Denote by $l_i$ the maximal line segments $l_i \subset \partial\Omega$. Suppose that $g \in BV(\partial\Omega)$ is such that for every $i$ the function $g$ is continuous at endpoints of $l_i$ and $g|_{l_i}$ is monotone. Then: \\
(1) There exists a solution to problem \eqref{eq:leastgradientproblem}; \\
(2) Let $g_n \in BV(\partial\Omega)$ be such that $g_n \rightarrow g$ strictly in $BV(\partial\Omega)$. Suppose that $u_n \in BV(\Omega)$ are solutions to problem \eqref{eq:leastgradientproblem} with boundary data $g_n$. Then, there exists $u \in BV(\Omega)$, a solution to problem \eqref{eq:leastgradientproblem}, and (possibly after passing to a subsequence) we have $u_n \rightarrow u$ strictly in $BV(\Omega)$.
\end{corollary}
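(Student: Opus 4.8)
The whole of Section~\ref{sec:stability} reduces solvability and stability of the least gradient problem on $\Omega$ to a single fact: an optimal transport plan $\gamma$ between $f^+=(\partial_\tau g)_+$ and $f^-=(\partial_\tau g)_-$ must have transport density giving no mass to $\partial\Omega$. For strictly convex $\Omega$ this is automatic; the plan here is to show that for a merely convex $\Omega$ the two hypotheses on $g$ (monotonicity on each maximal boundary segment, continuity at the endpoints of each such segment) are exactly what is needed to recover $\sigma_\gamma(\partial\Omega)=0$, after which both assertions follow from arguments already present in this section.

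\emph{Key Claim.} If $f=\partial_\tau g$ and $\gamma\in\mathcal{M}^+(\overline\Omega\times\overline\Omega)$ is any optimal transport plan between $f^+$ and $f^-$, then $\sigma_\gamma(\partial\Omega)=0$. I would argue as follows. Let $\{l_i\}$ be the (at most countable) family of maximal non-degenerate segments contained in $\partial\Omega$. A standard fact about planar convex bodies is that for each $i$ the line $\mathrm{aff}(l_i)$ supports $\overline\Omega$, so $\mathrm{aff}(l_i)\cap\overline\Omega=l_i$; consequently, for any segment $[x,y]\subset\overline\Omega$, if $\mathcal{H}^1([x,y]\cap\partial\Omega)>0$ then $[x,y]\subseteq l_i$ for some $i$, and in particular $x,y\in l_i$. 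Hence the integrand in \eqref{eq:definitionofsigmaversiontwo} is supported on $\bigcup_i(l_i\times l_i)$, and
\begin{equation*}
\sigma_\gamma(\partial\Omega)=\sum_i\int_{l_i\times l_i}\mathcal{H}^1([x,y]\cap\partial\Omega)\,\mathrm{d}\gamma(x,y)\le\mathrm{diam}(\Omega)\sum_i\gamma(l_i\times l_i).
\end{equation*}
It remains to see $\gamma(l_i\times l_i)=0$. Since $g|_{l_i}$ is monotone, $\partial_\tau g$ has constant sign on the relative interior of $l_i$, so (after possibly interchanging the roles of $f^+$ and $f^-$) the measure $f^-$ vanishes on $\mathrm{relint}(l_i)$; since $g$ is continuous at the two endpoints of $l_i$, $f$ carries no atom there, and therefore $f^-(l_i)=0$. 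As $(\Pi_y)_\#\gamma=f^-$ this forces $\gamma(\overline\Omega\times l_i)=0$, so $\gamma(l_i\times l_i)=0$.

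Granting the Key Claim, part (1) is immediate: standard optimal transport theory (the mass balance $f^+(\partial\Omega)=f^-(\partial\Omega)$ holding because $f=\partial_\tau g$) yields an optimal plan $\gamma$ on $\overline\Omega\times\overline\Omega$; by Theorem~\ref{thm:beckmannkantorovichequivalence} the associated Beckmann minimiser $p_\gamma$ satisfies $|p_\gamma|(\partial\Omega)=\sigma_\gamma(\partial\Omega)=0$, so Theorem~\ref{thm:lgpbeckmannequivalence}(3) produces a solution $u\in BV(\Omega)$ of \eqref{eq:leastgradientproblem} with boundary data $g$. For part (2) I would reproduce the proof of Theorem~\ref{thm:stability} almost verbatim: normalise $g_n$ to $\overline g_n$ with $|D\overline g_n|(\partial\Omega)=|Dg|(\partial\Omega)$ (the case $|Dg|(\partial\Omega)=0$ being elementary), rescale $u_n$ to $\overline u_n$ accordingly, use the maximum principle together with Proposition~\ref{prop:totalvariationestimate} for a uniform $BV(\Omega)$ bound, and extract weak-$*$ limits $\overline u_n\rightharpoonup u$, $p_n=R_{-\frac{\pi}{2}}D\overline u_n\rightharpoonup p$, and $\gamma_n\rightharpoonup\gamma$ of the corresponding optimal plans. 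Theorem~\ref{thm:santambrogiostability} (with $X=Y=\overline\Omega$) makes $\gamma$ an optimal transport plan between $f^+$ and $f^-$; at the single place where the proof of Theorem~\ref{thm:stability} invokes strict convexity to conclude $\sigma_\gamma(\partial\Omega)=0$, I insert the Key Claim. The rest is unchanged: $p=p_\gamma$ by uniqueness of weak limits and $|p|(\partial\Omega)=0$, so $u$ solves \eqref{eq:leastgradientproblem} with data $g$, while $|D\overline u_n|(\Omega)=\int|x-y|\,\mathrm{d}\gamma_n\to\int|x-y|\,\mathrm{d}\gamma=|Du|(\Omega)$ upgrades the convergence to strict, and undoing the normalisation gives $u_n\to u$ strictly in $BV(\Omega)$.

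The main obstacle is entirely inside the Key Claim, and specifically in the fact that \emph{both} hypotheses on $g$ are indispensable: monotonicity on each maximal segment $l_i$ is what forces one of $f^\pm$ to vanish on $\mathrm{relint}(l_i)$, while continuity of $g$ at the endpoints of $l_i$ is what prevents an atom of the ``wrong'' sign from sitting at a vertex; dropping either one reinstates transport running along $\partial\Omega$, which is precisely the mechanism behind the non-existence example recalled before the statement. Once the Key Claim is established, the failure of strict convexity no longer obstructs the passage from optimal transport plans back to least gradient functions, and the rest is a routine transcription of the arguments of this section.
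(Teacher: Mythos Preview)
Your proposal is correct and follows essentially the same route as the paper: you isolate the Key Claim $\sigma_\gamma(\partial\Omega)=0$, prove it by noting that $\mathcal{H}^1([x,y]\cap\partial\Omega)>0$ forces $[x,y]\subset l_i$ for some $i$ and then showing $\gamma(l_i\times l_i)\le\min(f^+(l_i),f^-(l_i))=0$ from the monotonicity and endpoint-continuity hypotheses, after which part~(1) follows from Theorems~\ref{thm:lgpbeckmannequivalence} and~\ref{thm:beckmannkantorovichequivalence} and part~(2) from the proof of Theorem~\ref{thm:stability} with the Key Claim substituted for strict convexity. This is exactly the paper's argument, with slightly more geometric detail spelled out in the Key Claim.
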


\begin{proof}
(1) In light of the discussion in Section \ref{sec:preliminaries}, it is sufficient to prove that there exists an optimal transport plan between $f^\pm = (\partial_\tau g)_\pm$ which gives no mass to the boundary. Our assumption on $g$ implies that $f$ has no atoms at endpoints of $l_i$ and that for each $i$ we either have $f^+(l_i) = 0$ or $f^-(l_i) = 0$. Hence,
\begin{equation*}
\sigma_\gamma(\partial\Omega) = \int_{\overline{\Omega} \times \overline{\Omega}} \mathcal{H}^1(\partial\Omega \cap [x,y]) \, \mathrm{d}\gamma(x,y) = 0;
\end{equation*}
to see this, note that whenever $[x,y]$ is not a subset of any $l_i$, we have $\mathcal{H}^1(\partial\Omega \cap [x,y]) = 0$. On the other hand, if $[x,y] \subset l_i$ for some $i$, then 
\begin{equation*}
0 \leq \gamma(l_i \times l_i) \leq \min(\gamma(l_i \times \overline{\Omega}), \gamma(\overline{\Omega} \times l_i)) = \min(f^+(l_i), f^-(l_i)) = 0.
\end{equation*}
Hence, $\sigma_\gamma(\partial\Omega) = 0$, so we may construct a solution to \eqref{eq:leastgradientproblem}.

(2) We replicate the proof of Theorem \ref{thm:stability}. Note that strict convexity of $\Omega$ was used in the proof only once, in order to conclude that $\sigma_\gamma(\partial\Omega) = 0$. But our assumptions on $g$ guarantee this, as we just proved in point (1). 
\end{proof}

Also, notice that using the optimal transport framework enabled us to easily handle the case of arbitrary convex domains, while the analysis in \cite{RS} is restricted to polygonal domains. The result presented above suggests that some parts of the analysis performed in \cite{RS,RS2} could be not only replicated, but also generalised to arbitrary convex domains using optimal transport techniques. We will focus on one more such instance: approximation of a convex domain $\Omega$ by a decreasing sequence of strictly convex domains $\Omega_n$, with suitable approximation of the boundary data.

\begin{corollary}\label{cor:convexstability}
Suppose that $\Omega$ convex. Denote by $l_i$ the maximal line segments $l_i \subset \partial\Omega$. $g \in BV(\partial\Omega)$ is such that for every $i$ the function $g$ is continuous at endpoints of $l_i$ and $g|_{l_i}$ is monotone. Suppose that $\Omega_n$ and $g_n$ are constructed as above. Then, if $u_n \in BV(\Omega_n)$ are solutions to problem \eqref{eq:leastgradientproblem} with boundary data $g_n$, then on a subsequence we have $u_n|_\Omega \rightarrow u$ strictly in $BV(\Omega)$. Moreover, $u$ is a solution to the least gradient problem with boundary data $g$. 
\end{corollary}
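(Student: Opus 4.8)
The plan is to mirror the proof of Theorem~\ref{thm:domainstability} almost verbatim, using the structural assumption on $g$ to replace the one place where strict convexity of $\Omega$ entered. Set $f_n=\partial_\tau g_n$ and $f=\partial_\tau g$; by Lemma~\ref{lem:approximationdomain} we have $g_n\in BV(\partial\Omega_n)$ with $|Dg_n|(\partial\Omega_n)=|Dg|(\partial\Omega)$, and by Lemma~\ref{lem:approximatingfn} we may pass to a subsequence along which $f_n^\pm\rightharpoonup f^\pm$ weakly in $\mathcal{M}(\overline{\Omega_1})$. Writing $p_n=R_{-\frac{\pi}{2}}Du_n$ for the associated Beckmann minimisers on $\Omega_n$, the maximum principle gives $\|u_n\|_{L^\infty(\Omega_n)}\le\|g\|_{L^\infty(\partial\Omega)}$ and Proposition~\ref{prop:totalvariationestimate} together with $\mathrm{diam}(\Omega_n)\le\mathrm{diam}(\Omega_1)$ gives a uniform bound on $|Du_n|(\Omega_n)$; hence, after a further subsequence, $u_n|_\Omega\to u$ in $L^1(\Omega)$ with $u\in BV(\Omega)$, $p_n|_{\overline\Omega}\rightharpoonup p$ weakly in $\mathcal{M}(\overline\Omega;\mathbb{R}^2)$, and $p=R_{-\frac{\pi}{2}}Du$ in $\Omega$.

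Next, let $\gamma_n$ be the optimal transport plan between $f_n^+$ and $f_n^-$ inducing $p_n$. All the $\gamma_n$ have equal mass $\tfrac12|Dg|(\partial\Omega)$ and are supported in the compact set $\overline{\Omega_1}\times\overline{\Omega_1}$, so by Prokhorov's theorem $\gamma_n\rightharpoonup\gamma$ along a subsequence, and Theorem~\ref{thm:santambrogiostability} identifies $\gamma$ as an optimal transport plan between $f^+$ and $f^-$. Since $f^\pm$ are concentrated on $\partial\Omega$, so is each marginal of $\gamma$, whence $\gamma$ is concentrated on $\partial\Omega\times\partial\Omega$ and is an optimal transport plan for the Monge--Kantorovich problem on $\overline\Omega$; in particular all of its transport rays lie in the convex set $\overline\Omega$. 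The key step, which replaces the use of strict convexity, is to show $\sigma_\gamma(\partial\Omega)=0$. By convexity of $\Omega$, a segment $[x,y]$ with $x,y\in\partial\Omega$ can satisfy $\mathcal{H}^1(\partial\Omega\cap[x,y])>0$ only if it lies on a supporting line of $\Omega$, i.e. $[x,y]\subset l_i$ for some maximal segment $l_i$. The assumption that $g$ is continuous at the endpoints of $l_i$ and monotone on $l_i$ forces $\min(f^+(l_i),f^-(l_i))=0$, hence $\gamma(l_i\times l_i)\le\min\big((\Pi_x)_\#\gamma(l_i),(\Pi_y)_\#\gamma(l_i)\big)=0$; summing over the (countably many) maximal segments and using formula~\eqref{eq:definitionofsigmaversiontwo},
\begin{equation*}
\sigma_\gamma(\partial\Omega)=\int_{\overline\Omega\times\overline\Omega}\mathcal{H}^1(\partial\Omega\cap[x,y])\,\mathrm{d}\gamma(x,y)\le\mathrm{diam}(\Omega)\sum_i\gamma(l_i\times l_i)=0,
\end{equation*}
exactly as in the proof of Corollary~\ref{cor:convexexistence}(1).

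From here the argument is identical to the end of the proof of Theorem~\ref{thm:domainstability}. Since $\sigma_\gamma(\partial\Omega)=0$, the induced Beckmann minimiser $p_\gamma$ gives no mass to $\partial\Omega$; by \eqref{eq:definitionofpgamma} and $\gamma_n\rightharpoonup\gamma$ we obtain $p=p_\gamma$, so $u\in BV(\Omega)$ is a solution of \eqref{eq:leastgradientproblem} with boundary data $g$ (the trace condition follows from $[p,\nu]=\partial_\tau(Tu)$ and $|p|(\partial\Omega)=0$, as in Theorem~\ref{thm:lgpbeckmannequivalence}). Finally, lower semicontinuity of the total variation together with $|p_n|(\overline{\Omega_n})=\int|x-y|\,\mathrm{d}\gamma_n\to\int|x-y|\,\mathrm{d}\gamma=|p_\gamma|(\overline\Omega)$ gives
\begin{equation*}
|Du|(\Omega)\le\liminf_{n\to\infty}|D(u_n|_\Omega)|(\Omega)\le\lim_{n\to\infty}|p_n|(\overline{\Omega_n})=|p_\gamma|(\overline\Omega)=|Du|(\Omega),
\end{equation*}
so all inequalities are equalities and $u_n|_\Omega\to u$ strictly in $BV(\Omega)$. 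I do not anticipate a serious obstacle: the only genuinely new point compared with Theorem~\ref{thm:domainstability} is the identity $\sigma_\gamma(\partial\Omega)=0$, and the only thing to be slightly careful about is that the limiting plan $\gamma$ really has marginals $(\partial_\tau g)_\pm$ supported on $\partial\Omega$ (so that the segment-by-segment estimate from Corollary~\ref{cor:convexexistence} applies) and that its transport rays remain inside $\overline\Omega$ — both immediate from Lemma~\ref{lem:approximatingfn} and convexity.
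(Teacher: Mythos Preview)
Your proposal is correct and follows essentially the same approach as the paper: the paper's own proof simply says to replicate the proof of Theorem~\ref{thm:domainstability}, noting that strict convexity of $\Omega$ was used only to obtain $\sigma_\gamma(\partial\Omega)=0$, and that this now follows from the argument in point~(1) of Corollary~\ref{cor:convexexistence}. You have written out exactly that argument in detail, including the segment-by-segment estimate $\gamma(l_i\times l_i)=0$, so there is nothing to add.
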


\begin{proof}
We replicate the proof of Theorem \ref{thm:domainstability}. Again, strict convexity of $\Omega$ was used in the proof only once, in order to conclude that $\sigma_\gamma(\partial\Omega) = 0$. But our assumptions on $g$ guarantee this, as we just proved in point (1) of Corollary \ref{cor:convexexistence}.
\end{proof}

Finally, let us comment on the anisotropic case. All the results in this Section are also valid for any strictly convex norm $\varphi$. We used strict convexity of $\varphi$ on several occasions: apart from the equivalence described in Section \ref{sec:preliminaries}, we used the fact that transport rays are line segments to prove that $\sigma_\gamma(\partial\Omega) = 0$ and that every solution to the Beckmann problem is of the form $p = p_\gamma$ for an optimal transport plan $\gamma$ in the proofs of Proposition \ref{prop:totalvariationestimate}, Theorem \ref{thm:stability} (Step 3), Theorem \ref{thm:domainstability} (Step 2), and Corollaries \ref{cor:convexexistence} and \ref{cor:convexstability}.

{\flushleft \bf Acknowledgements.} This work was partially supported by the   DFG-FWF project FR 4083/3-1/I4354, by the OeAD-WTZ project CZ 01/2021, and by the project 2017/27/N/ST1/02418 funded by the National Science Centre, Poland.

\bibliographystyle{plain}

\begin{bibdiv}
\begin{biblist}

\bib{Anz}{article}{
      author={Anzelotti, G.},
       title={Pairings between measures and bounded functions and compensated
  compactness},
        date={1983},
     journal={Ann. di Matematica Pura ed Appl. IV},
      volume={135},
       pages={293\ndash 318},
}

\bib{BGG}{article}{
      author={Bombieri, E.},
      author={de~Giorgi, E.},
      author={Giusti, E.},
       title={Minimal cones and the {Bernstein} problem},
        date={1969},
     journal={Invent. Math.},
      volume={7},
       pages={243\ndash 268},
}

\bib{CF}{article}{
      author={Chen, G-Q.},
      author={Frid, H.},
       title={Divergence-measure fields and hyperbolic conservation laws},
        date={1999},
     journal={Arch. Rational Mech. Anal.},
      volume={147},
       pages={89\ndash 118},
}

\bib{DG2019}{article}{
      author={Dweik, S.},
      author={G\'{o}rny, W.},
       title={Least gradient problem on annuli},
     journal={Analysis \& PDE, to appear},
}

\bib{DS}{article}{
      author={Dweik, S.},
      author={Santambrogio, F.},
       title={{$L^p$} bounds for boundary-to-boundary transport densities, and
  {$W^{1,p}$} bounds for the {BV} least gradient problem in {2D}},
        date={2019},
     journal={Calc. Var. Partial Differential Equations},
      volume={58},
      number={1},
       pages={31},
}

\bib{Gor2019IUMJ}{article}{
      author={G\'{o}rny, W.},
       title={Existence of minimisers in the least gradient problem for general
  boundary data},
     journal={Indiana Univ. Math. J., to appear},
}

\bib{Gor2018JMAA}{article}{
      author={G\'{o}rny, W.},
       title={({Non})uniqueness of minimizers in the least gradient problem},
        date={2018},
     journal={J. Math. Anal. Appl.},
      volume={468},
       pages={913\ndash 938},
}

\bib{Gor2018CVPDE}{article}{
      author={G\'{o}rny, W.},
       title={Planar least gradient problem: existence, regularity and
  anisotropic case},
        date={2018},
     journal={Calc. Var. Partial Differential Equations},
      volume={57},
      number={4},
       pages={98},
}

\bib{Gor2020TAMS}{article}{
      author={G\'{o}rny, W.},
       title={Least gradient problem with {D}irichlet condition imposed on a
  part of the boundary},
        date={2020},
     journal={arXiv:2009.04048},
}

\bib{Gor2020NA}{article}{
      author={G\'{o}rny, W.},
       title={Least gradient problem with respect to a non-strictly convex
  norm},
        date={2020},
     journal={Nonlinear Anal.},
      volume={200},
       pages={112049},
}

\bib{GRS2017NA}{article}{
      author={G\'{o}rny, W.},
      author={Rybka, P.},
      author={Sabra, A.},
       title={Special cases of the planar least gradient problem},
        date={2017},
     journal={Nonlinear Anal.},
      volume={151},
       pages={66\ndash 95},
}

\bib{HKLS}{article}{
      author={Hakkarainen, H.},
      author={Korte, R.},
      author={Lahti, P.},
      author={Shanmugalingam, N.},
       title={Stability and continuity of functions of least gradient},
        date={2014},
     journal={Anal. Geom. Metr. Spaces},
      volume={3},
       pages={123\ndash 139},
}

\bib{JMN}{article}{
      author={Jerrard, R.L.},
      author={Moradifam, A.},
      author={Nachman, A.I.},
       title={Existence and uniqueness of minimizers of general least gradient
  problems},
        date={2018},
     journal={J. Reine Angew. Math.},
      volume={734},
       pages={71\ndash 97},
}

\bib{KS}{inproceedings}{
      author={Kohn, R.V.},
      author={Strang, S.},
       title={The constrained least gradient problem},
        date={1986},
   booktitle={Non-classical continuum mechanics. {P}roceedings of the {L}ondon
  {M}athematical {S}ociety {S}ymposium, {D}urham, {J}uly 1986},
      editor={Knops, R.J.},
      editor={Lacey, A.A.},
   publisher={Cambridge University Press},
     address={Cambridge},
       pages={226\ndash 243},
}

\bib{KLLS}{article}{
      author={Korte, R.},
      author={Lahti, P.},
      author={Li, X.},
      author={Shanmugalingam, N.},
       title={Notions of {D}irichlet problem for functions of least gradient in
  metric measure spaces},
        date={2019},
     journal={Rev. Mat. Iberoamericana},
      volume={35},
       pages={1603\ndash 1648},
}

\bib{Maz}{article}{
      author={Maz\'on, J.M.},
       title={The {Euler}-{Lagrange} equation for the anisotropic least
  gradient problem},
        date={2016},
     journal={Nonlinear Anal. Real World Appl.},
      volume={31},
       pages={452\ndash 472},
}

\bib{MRL}{article}{
      author={Maz\'on, J.M.},
      author={Rossi, J.D.},
      author={Segura~de Le\'on, S.},
       title={Functions of least gradient and 1-harmonic functions},
        date={2014},
     journal={Indiana Univ. Math. J.},
      volume={63},
       pages={1067\ndash 1084},
}

\bib{Mir}{article}{
      author={Miranda, M.},
       title={Comportamento delle successioni convergenti di frontiere
  minimali},
        date={1967},
     journal={Rend. Semin. Mat. Univ. Padova},
      volume={38},
       pages={238\ndash 257},
}

\bib{Mor}{article}{
      author={Moradifam, A.},
       title={Existence and structure of minimizers of least gradient
  problems},
        date={2018},
     journal={Indiana Univ. Math. J.},
      volume={67},
      number={3},
       pages={1025\ndash 1037},
}

\bib{PP}{article}{
      author={Parks, H.R.},
      author={Pitts, J.T.},
       title={The least-gradient method for computing area minimizing
  hypersurfaces spanning arbitrary boundaries},
        date={1996},
     journal={J. Comput. Appl. Math.},
      volume={66},
      number={1},
       pages={401\ndash 409},
}

\bib{RS}{article}{
      author={Rybka, P.},
      author={Sabra, A.},
       title={The planar least gradient problem in convex domains, the case of
  continuous datum},
        date={2019},
     journal={arXiv:1911.08403},
}

\bib{RS2}{article}{
      author={Rybka, P.},
      author={Sabra, A.},
       title={The planar least gradient problem in convex domains: the
  discontinuous case},
        date={2020},
     journal={arXiv:2007.06361},
}

\bib{San2015}{book}{
      author={Santambrogio, F.},
       title={Optimal transport for applied mathematicians},
      series={Progress in Nonlinear Differential Equations and Their
  Applications 87},
   publisher={Birkh\"auser},
     address={Basel},
        date={2015},
}

\bib{ST}{article}{
      author={Spradlin, G.},
      author={Tamasan, A.},
       title={Not all traces on the circle come from functions of least
  gradient in the disk},
        date={2014},
     journal={Indiana Univ. Math. J.},
      volume={63},
       pages={1819\ndash 1837},
}

\bib{SWZ}{article}{
      author={Sternberg, P.},
      author={Williams, G.},
      author={Ziemer, W.P.},
       title={Existence, uniqueness, and regularity for functions of least
  gradient},
        date={1992},
     journal={J. Reine Angew. Math.},
      volume={430},
       pages={35\ndash 60},
}

\bib{SZ}{article}{
      author={Sternberg, P.},
      author={Ziemer, W.P.},
       title={Generalized motion by curvature with a {Dirichlet} condition},
        date={1994},
     journal={J. Differential Equations},
      volume={114},
       pages={580\ndash 600},
}

\bib{Vil}{book}{
      author={Villani, C.},
       title={Topics in optimal transportation},
   publisher={American Mathematical Society, Graduate Studies in Mathematics
  Vol. 58},
        date={2003},
}

\bib{Zun}{article}{
      author={Zuniga, A.},
       title={Continuity of minimizers to the weighted least gradient
  problems},
        date={2019},
     journal={Nonlinear Analysis},
      volume={178},
       pages={86\ndash 109},
}

\end{biblist}
\end{bibdiv}

\end{document}